\newtheorem{theorem}{Theorem}
\newtheorem{lemma}{Lemma}
\newtheorem{remark}{Remark}
\newtheorem{corollary}{Corollary}
\newtheorem{definition}{Definition}
\newtheorem{claim}{Claim}
\newtheorem{proposition}{Proposition}
\newcommand{\hE}{\mathcal{E}}
\newcommand{\hN}{\mathcal{N}}
\newcommand{\hF}{\mathcal{F}}
\newcommand{\hP}{\mathcal{P}}
\newcommand{\hS}{\mathcal{S}}
\newcommand{\hL}{\mathcal{L}}
\newcommand{\ii}{\textbf{i}}
\newcommand{\re}{\mathrm{Re}}
\newcommand{\im}{\mathrm{Im}}
\newcommand{\diag}{\mathrm{diag}}
\newcommand{\eqdef}{:=}
\newcommand{\eps}{\varepsilon}
\newcommand{\lgan}[1]{\ifthenelse{\boolean{showcomments}}
{ \textcolor{red}{(Lingwen says: #1)} } {} }
\newcommand{\slow}[1]{\ifthenelse{\boolean{showcomments}}
{ \textcolor{blue}{(Steven says:  #1)}}{}}
\newcommand{\addcite}[0]{\ifthenelse{\boolean{shocwomments}}
{ \textcolor{red}{(addcite)}}{}}
\newcommand{\addcites}[0]{\ifthenelse{\boolean{showcomments}}
{ \textcolor{red}{(addcite(s))}}{}}
\newcommand{\addref}[0]{\ifthenelse{\boolean{showcomments}}
{ \textcolor{Blue}{(addref)}}{}}
\newcommand{\todo}[1]{\ifthenelse{\boolean{showcomments}}
{ \textcolor{red}{(To do: #1)}} {} }
\begin{document}

\title{Exact Convex Relaxation of Optimal Power Flow in Radial Networks}
\author{Lingwen Gan, Na Li, Ufuk Topcu, and Steven H. Low
        \thanks{This work was supported by NSF NetSE grant CNS 0911041, ARPA-E grant DE-AR0000226, Southern California Edison, National Science Council of Taiwan, R.O.C, grant NSC 103-3113-P-008-001, Resnick Institute, and AFOSR award number FA9550-12-1-0302.

         Lingwen Gan, Na Li, and Steven H. Low are with the Engineering and Applied Science Department, California Institute of Technology, Pasadena, CA 91125 USA (e-mail: lgan@caltech.edu).

         Ufuk Topcu is with the Electrical and Systems Engineering Division, University of Pennsylvania, Philadelphia, PA 19104 USA.}
}
\maketitle

\begin{abstract}
The optimal power flow (OPF) problem determines 
power generation/demand that minimize a certain objective such as generation cost or power loss. 
It is nonconvex. We prove that, for radial networks, 
 after shrinking its feasible set slightly,
 the global optimum of OPF can be recovered via a second-order cone programming (SOCP) relaxation under a condition that can be checked a priori. The condition holds for the IEEE 13-, 34-, 37-, 123-bus networks and two real-world networks, and has a physical interpretation.\end{abstract}
\section{Introduction}
The optimal power flow (OPF) problem determines power generations/demands to minimize a certain 
objective such as generation cost or power loss. It has been one of the fundamental problems in power system operation since it was proposed in 1962 \cite{OPF}.
The OPF problem is increasingly important for distribution networks due to the advent of distributed generation (e.g., rooftop photovoltaic panels) and controllable loads (e.g., electric vehicles). Distributed generation is difficult to predict, calling the traditional ``generation follows demand'' strategy into question. Meanwhile, controllable loads provide significant potential to compensate for the randomness in distributed generation. To achieve this, solving the OPF problem in real-time is inevitable. Distribution networks are usually radial (with a tree topology).

The OPF problem is difficult to solve due to the nonconvex physical laws that goven power flow, and there are in general three ways to deal with this challenge: (i) linearize the power flow laws; (ii) look for local optima of the OPF problem; and (iii) convexify power flow laws, which are described in turn.

The power flow laws can be approximated by linear equations known as the DC power flow model \cite{Stott74,Alsac90,Stott09}, if 1) line resistances are small; 2) voltages are near their nominal values; and 3) voltage angle differences between adjacent buses are small. With DC power flow model, the OPF problem reduces to a linear program. This method is widely used in practice for transmission networks and often quite effective, but does not apply to distribution networks where line resistances are high and voltages deviate significantly from their nominal values. This method also does not apply to problems where reactive power flow or voltage deviations need to be optimized explicitly, e.g., power routing with FACTS devices \cite{xiao2002power} and Volt/VAR control \cite{Turitsyn10}.

Various algorithms have been developed to find local optima of the OPF problem, e.g., successive linear/quadratic programming \cite{Contaxis1986}, trust-region based methods \cite{Min2005,Sousa2011}, Lagrangian Newton method \cite{Baptista2005}, and interior-point methods \cite{Torres1998,Jabr2003,Capitanescu2007}. Some of these algorithms, especially the Newton-Ralphson based ones, are quite successful empirically, but in general, these algorithms are not guaranteed to convergence, nor converge to (nearly) optimal solutions.

There are two types of convex relaxations of the OPF problem: semidefinite programming (SDP) relaxations and second-order cone programming (SOCP) relaxations. It is proposed in \cite{Bai08,Javad12} to transform the nonconvex power flow constraints into linear constraints on a rank-one positive semidefinite matrix, and then remove the rank-one constraint to obtain an SDP relaxation. If the solution of the SDP relaxation is feasible for the OPF problem, then a global optimum of the OPF problem can be recovered. The SDP relaxation is called {\it exact} in this case. Strikingly, the SDP relaxation is exact for the IEEE 14-, 30-, 57-, and 118-bus networks \cite{Javad12}, and a more recent study on the computational speed and exactness of the SDP relaxation can be found in \cite{molzahnimplementation}.
Different SOCP relaxations have been proposed for different models, first in \cite{Jabr06}
for a branch flow model in polar coordinate, then in \cite{Masoud11, Farivar-2013-BFM-TPS}
for a branch flow model due to \cite{Baran89_capacitor_placement, Baran89_capacitor_sizing}, 
and in \cite{Sojoudi2012PES} for  a bus injection model.
In this paper we focus on the SOCP relaxation proposed in \cite{Masoud11,Farivar-2013-BFM-TPS}
and prove a sufficient condition for the relaxation to be exact.
For radial networks,  SOCP relaxation and  SDP relaxation are equivalent in the sense that there
is a bijection between their feasible sets
\cite{Bose-2012-BFMe-Allerton}.  Hence one should always solve SOCP relaxation instead of SDP
relaxation for radial networks since the former has a much lower computational complexity.

SDP/SOCP relaxations are in general not exact and counterexamples can be found in \cite{Lesieutre11}. 
Significant amount of work has been devoted to finding sufficient conditions under 
which these relaxations are exact for radial networks; see \cite{Low2013} for a
survey.
For \emph{AC radial} networks, these conditions roughly fall into three categories: 
\begin{enumerate}
\item The power injection constraints satisfy certain patterns  
\cite{Masoud11, Farivar-2013-BFM-TPS, Bose2011, Bose-2012-QCQPt, Zhang2013, Sojoudi2012PES, Sojoudi2013},
e.g., there are no lower bounds on the power injections (load over-satisfaction).
This sufficient condition, first proved in \cite{Bose-2012-QCQPt} and subsequently generalized 
in \cite{Sojoudi2013}, includes as special cases the load over-satisfaction condition in 
\cite{Masoud11, Farivar-2013-BFM-TPS, Bose2011, Sojoudi2012PES}
and in \cite[Theorem 7]{LavaeiTseZhang2012}, as well as the sufficient condition in 
\cite[Theorem 2]{Zhang2013}.

\item The phase angle difference across each line is bounded in terms of its $r/x$ ratio
\cite{Zhang2013, LavaeiTseZhang2012, lam2012distributed}.  When the voltage magnitude is fixed
this condition provides a nice geometric insight on why convex relaxations are exact.

\item The voltage upper bounds are relaxed plus some other conditions \cite{Gan12,Gan13}.
The main result in this paper generalizes and unifies this set of sufficient conditions;
see Section \ref{sec: prior works}.
\end{enumerate}

\subsection*{Summary of contributions}
The goal of this paper is to show that in radial networks, the SOCP relaxation is exact under a mild condition that can be checked a priori, after modifying the OPF problem. In particular, contributions of this paper are threefold.

First, we prove that {\it if optimal power injections lie in a region where voltage upper bounds do not bind, then the SOCP relaxation is exact under a mild condition}. The condition can be checked a priori and holds for the IEEE 13-, 34-, 37-, 123-bus networks and two real-world networks. The condition has a physical interpretation: it follows from the physical intuition that all upstream reverse power flows should increase if the power loss on a line is reduced.
Second, we {\it modify the OPF problem by imposing additional constraints on power injections}. The modification ensures the exactness of the SOCP relaxation under the aforementioned condition, while only eliminating feasible points that are close to voltage upper bounds. A modification is necessary to ensure an exact SOCP relaxation since otherwise examples exist where the SOCP relaxation is not exact.
Third, {\it this paper unifies and generalizes the results in \cite{Gan12,Gan13}}.

The rest of this paper is organized as follows. The OPF problem and the SOCP relaxation are introduced in Section \ref{sec: opf}. In Section \ref{sec: condition}, a sufficient condition that guarantees the exactness of the SOCP relaxation is provided. The condition consists of two parts: C1 and C2. C2 cannot be checked a priori, hence in Section \ref{sec: modification}, we propose a modified OPF problem whose corresponding SOCP is exact under C1. We compare C1 with prior works in Section \ref{sec: prior works} and present case studies in Section \ref{sec: case study}.
\section{The optimal power flow problem}\label{sec: opf}
This paper studies the optimal power flow (OPF) problem in distribution networks, which includes Volt/VAR control and demand response as special cases. In the following we present a model that incorporates nonlinear power flow and a variety of controllable devices including distributed generators, inverters, controllable loads, and shunt capacitors.

\subsection{Power flow model}
A distribution network is composed of buses and lines connecting these buses, and usually has a tree topology. The root of the tree is a {\it substation bus} that is connected to the transmission network. It has a fixed voltage and redistributes the bulk power it receives from the transmission network to other buses. Index the substation bus by 0 and the other buses by $1,\ldots,n$. Let $\hN\eqdef\{0,\ldots,n\}$ denote the collection of all buses and define $\hN^+:=\hN\backslash\{0\}$. Each line connects an ordered pair $(i,j)$ of buses where bus $j$ lies on the unique path from bus $i$ to bus 0. Let $\hE$ denote the collection of all lines, and abbreviate $(i,j)\in \hE$ by $i\rightarrow j$ whenever convenient.

For each bus $i\in \hN$, let $V_i$ denote its complex voltage and define $v_i\eqdef |V_i|^2$. Specifically the substation voltage $v_0$ is given and fixed. Let $s_i= p_i+\ii q_i$ denote the power injection of bus $i$ where $p_i$ and $q_i$ denote the real and reactive power injections respectively. Let $\hP_i$ denote the path (a collection of buses in $\hN$ and lines in $\hE$) from bus $i$ to bus 0. For each line $(i,j)\in\hE$, let $z_{ij}= r_{ij}+\ii x_{ij}$ denote its impedance. Let $I_{ij}$ denote the complex current from bus $i$ to bus $j$ and define $\ell_{ij}\eqdef |I_{ij}|^2$. Let $S_{ij}=P_{ij}+\ii Q_{ij}$ denote the sending-end power flow from bus $i$ to bus $j$ where $P_{ij}$ and $Q_{ij}$ denote the real and reactive power flow respectively.
Some of the notations are summarized in Fig. \ref{fig: notation}. We use a letter without subscripts to denote a vector of the corresponding quantities, e.g., $v=(v_i)_{i\in\hN^+}$, $\ell=(\ell_{ij})_{(i,j)\in\hE}$. Note that subscript 0 is not included in nodal quantities such as $v$ and $s$. For a complex number $a\in\mathbb{C}$, let $\bar{a}$ denote the conjugate of $a$.
	\begin{figure}[h]
     	\centering
     	\includegraphics[scale=0.4]{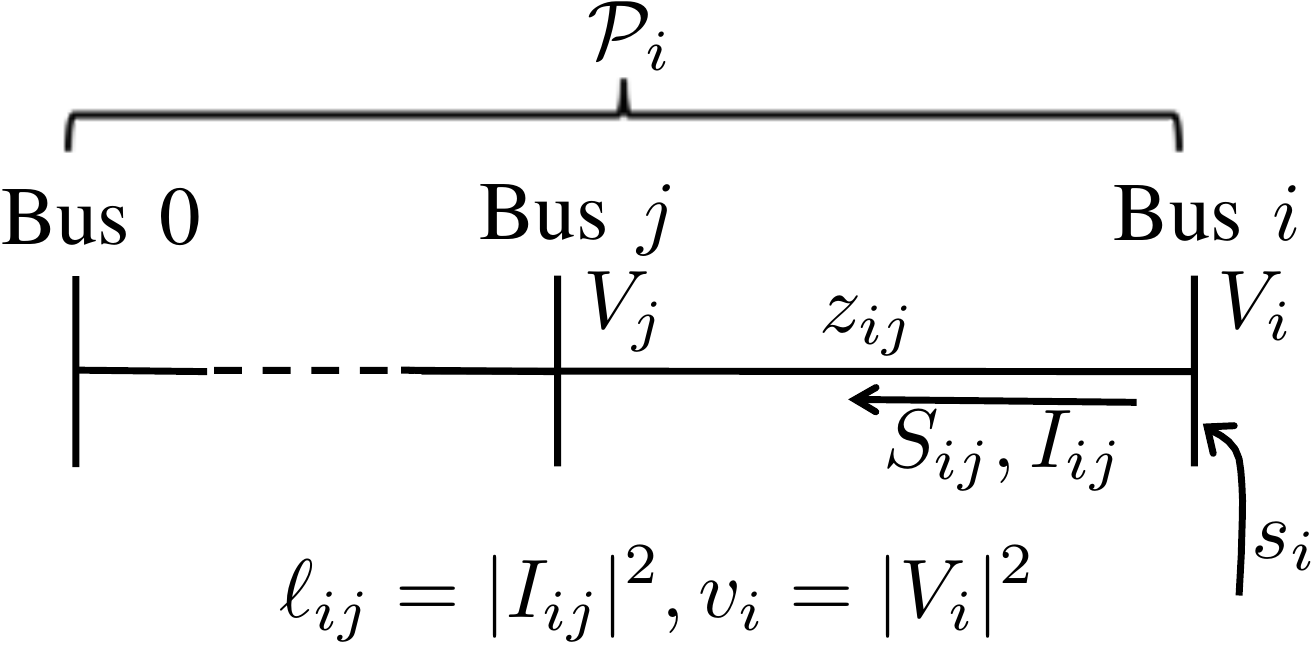}
      	\caption{Some of the notations.}
      	\label{fig: notation}
	\end{figure}

Given the network graph $(\hN, \hE)$, the impedance $z$, and the substation voltage $v_0$, then the other variables $(s,S,v,\ell,s_0)$ are described by the {\it branch flow model}:
\begin{subequations}\label{PF}
\begin{align}
& S_{ij} = s_i + \sum_{h:\,h\rightarrow i} (S_{hi}-z_{hi}\ell_{hi}), & \forall (i,j)\in\hE;\label{BFM S}\\
& 0 = s_0 + \sum_{h:\,h\rightarrow 0} (S_{h0}-z_{h0}\ell_{h0}); & \label{BFM s}\\
& v_i-v_j = 2\re(\bar{z}_{ij}S_{ij})-|z_{ij}|^2\ell_{ij}, &\forall (i,j)\in\hE;\label{BFM v}\\
& \ell_{ij} = \frac{|S_{ij}|^2}{v_i}, &\forall (i,j)\in\hE\label{BFM ell}
\end{align}
\end{subequations}
for radial networks \cite{Baran89_capacitor_sizing,Baran89_network_reconfiguration}.

\subsection{The OPF problem}
\label{sec: control elements}
We consider the following controllable devices in a distribution network: distributed generators, inverters, controllable loads such as electric vehicles and smart appliances, and shunt capacitors. Real and reactive power generation/consumption of these devices can be controlled to achieve certain objectives. For example, in Volt/VAR control, reactive power injection of inverters and shunt capacitors are controlled to regulate voltages; in demand response, real power consumption of controllable loads is reduced or shifted in response to power supply conditions. Mathematically, power injection $s$ is the control variable, after specifying which the other variables $(S,v,\ell,s_0)$ are determined by the power flow laws in \eqref{PF}.

The power injection $s_i$ of a bus $i\in \hN^+$ is constrained to be in an pre-specified set $\mathcal{S}_i$, i.e.,
	\begin{equation}\label{constraint s}
	s_i \in \mathcal{S}_i, \qquad i\in \hN^+.
	\end{equation}
The set $\mathcal{S}_i$ for some controllable devices are:
\begin{itemize}
\item If $s_i$ represents a shunt capacitor with nameplate capacity $\overline{q}_i$, then
	$\mathcal{S}_i = \{s\in\mathbb{C}~|~\re(s)=0, ~\im(s)=0\text{ or }\overline{q}_i\}.$
Note that $\mathcal{S}_i$ is nonconvex and disconnected in this case.
\item If $s_i$ represents a solar panel with generation capacity $\overline{p}_i$, that is connected to the grid through an inverter with nameplate capacity $\overline{s}_i$, then
	$\mathcal{S}_i = \{s\in\mathbb{C}~|~0\leq \re(s)\leq \overline{p}_i, ~|s|\leq \overline{s}_i \}.$
\item If $s_i$ represents a controllable load with constant power factor $\eta$, whose real power consumption can vary continuously from $-\overline{p}_i$ to $-\underline{p}_i$ (here $\underline{p}_i\leq\overline{p}_i\leq0$), then
	$\mathcal{S}_i = \left\{s\in\mathbb{C} ~|~
    	\underline{p}_i\leq \re(s)\leq \overline{p}_i, ~\im(s)=\sqrt{1-\eta^2}\re(s)/\eta
    	\right\}.$
\end{itemize}
Note that $s_i$ can represent the aggregate power injection of multiple such devices with an appropriate $\mathcal{S}_i$, and that the set $\mathcal{S}_i$ is not necessarily convex or connected.

An important goal of control is to regulate the voltages within a range. This is captured by pre-specified voltage lower and upper bounds $\underline{v}_i$ and  $\overline{v}_i$ (in per unit value), i.e.,
    	\begin{equation}\label{constraint v}
	\underline{v}_i \leq v_i \leq \overline{v}_i, \qquad i\in \hN^+.
	\end{equation}
For example, if 5\% voltage deviation from nominal values is allowed, then $0.95^2\leq v_i\leq 1.05^2$. We consider the control objective
	\begin{equation}\label{objective}
	C(s,s_0) = \sum_{i\in\hN} f_i(\re(s_i))
	\end{equation}
where $f_i:\mathbb{R}\rightarrow\mathbb{R}$ denotes the generation cost at bus $i$ for $i\in\hN$. If $f_i(x)=x$ for $i\in\hN$, then $C$ is the total power loss in the network.

The OPF problem seeks to minimize the generation cost \eqref{objective}, subject to power flow constraint \eqref{PF}, power injection constraint \eqref{constraint s}, and voltage constraint \eqref{constraint v}:
	\begin{subequations}
    	\begin{align}
    	\textbf{OPF:}~\min~~ & \sum_{i\in\hN} f_i(\re(s_i)) \nonumber\\
	\mathrm{over}~~ & s,S,v,\ell,s_0 \nonumber\\
	\mathrm{s.t.}~~ & S_{ij} = s_i + \sum_{h:\,h\rightarrow i} (S_{hi}-z_{hi}\ell_{hi}), \quad \forall (i,j)\in\hE; \label{OPF S}\\
	& 0 = s_0 + \sum_{h:\,h\rightarrow 0} (S_{h0}-z_{h0}\ell_{h0}); \label{OPF s}\\
	& v_i-v_j = 2\re(\bar{z}_{ij}S_{ij})-|z_{ij}|^2\ell_{ij}, \quad \forall (i,j)\in\hE; \label{OPF v}\\
	& \ell_{ij} = \frac{|S_{ij}|^2}{v_i}, \quad \forall (i,j)\in\hE; \label{OPF ell}\\
	& s_i \in \mathcal{S}_i, \quad i\in \hN^+; \label{OPF constraint s}\\
	& \underline{v}_i \leq v_i \leq \overline{v}_i, \quad i\in \hN^+.\label{OPF constraint v}
    	\end{align}
	\end{subequations}
The following assumptions are made on OPF throughout this work.
\begin{itemize}
\item[A1] The network $(\hN,\hE)$ is a tree. Distribution networks are usually radial networks.
\item[A2] The substation voltage $v_0$ is fixed and given. In practice, $v_0$ can be modified several times a day, and therefore can be considered as a given constant at the minutes timescale of OPF.
\item[A3] Line resistances and reactances are strictly positive, i.e., $r_{ij}>0$ and $x_{ij}>0$ for $(i,j)\in\hE$. In practice, $r_{ij}>0$ since lines are passive (consume power), and $x_{ij}>0$ since lines are inductive.
\item[A4] Voltage lower bounds are strictly positive, i.e., $\underline{v}_i>0$ for $i\in\hN^+$. In practice, $\underline{v}_i$ is slightly below 1p.u..
\end{itemize}

The equality constraint \eqref{OPF ell} is nonconvex, and one can relax it to inequality constraints to obtain the following second-order cone programming (SOCP) relaxation \cite{Masoud11,Farivar-2013-BFM-TPS}:
    	\begin{align}
    	\textbf{SOCP:}~\min~~ & \sum_{i\in\hN} f_i(\re(s_i)) \nonumber\\
	\mathrm{over}~~ & s,S,v,\ell,s_0 \nonumber\\
	\mathrm{s.t.}~~ & \eqref{OPF S}-\eqref{OPF v}, ~\eqref{OPF constraint s}-\eqref{OPF constraint v};\nonumber\\
	& \ell_{ij} \geq \frac{|S_{ij}|^2}{v_i}, \quad \forall (i,j)\in\hE.\label{relax}
    	\end{align}
Note that SOCP is not necessarily convex, since we allow $f_i$ to be nonconvex for some $i\in\hN$ and $\mathcal{S}_i$ to be nonconvex for some $i\in\hN^+$. Nonetheless, we call it SOCP for brevity.

If an SOCP solution $w=(s,S,v,\ell,s_0)$ is feasible for OPF, i.e., $w$ satisfies \eqref{OPF ell}, then $w$ is a global optimum of OPF. This motivates the following definition of {\it exactness} for SOCP.
	\begin{definition}\label{def: exact}
	SOCP is {\it exact} if every of its solutions satisfies \eqref{OPF ell}.
	\end{definition}
\section{A sufficient condition}\label{sec: condition}
We provide a sufficient condition that ensures SOCP to be exact in this section. This condition is composed of two parts: C1 and C2. C1 is a mild condition that only depends on SOCP parameters. It follows from the physical intuition that all upstream reverse power flows should increase if the power loss on a line is reduced. C2 depends on SOCP solutions and cannot be checked a priori, but motivates us to modify OPF such that the corresponding SOCP is exact under C1. The modified OPF problem will be discussed in Section \ref{sec: modification}.

\subsection{Statement of the condition}\label{sec: statement}
We start with introducing the notations that will be used in the statement of the condition. One can ignore the $\ell$ terms in \eqref{BFM S} and \eqref{BFM v} to obtain the {\it Linear DistFlow Model} \cite{Baran89_capacitor_sizing, Baran89_network_reconfiguration}
	\begin{eqnarray*}
	S_{ij} = s_i + \sum_{h:\,h\rightarrow i}S_{hi}, && \forall(i,j)\in\hE;\\
	v_i - v_j = 2\re(\bar{z}_{ij}S_{ij}), && \forall(i,j)\in\hE.
	\end{eqnarray*}
Let $(\hat{S},\hat{v})$ denote the solution of the Linear DistFlow model, then
	\begin{eqnarray*}
	\hat{S}_{ij}(s) = \sum_{h:\,i\in \hP_h} s_h, && \forall(i,j)\in\hE;\\
	\hat{v}_i(s) \eqdef v_0 + 2\sum_{(j,k)\in \hP_i}\re\left(\bar{z}_{jk}\hat{S}_{jk}( s)\right), && \forall i\in\hN
	\end{eqnarray*}
as in Fig. \ref{fig: approximation}. Physically, $\hat{S}_{ij}(s)$ denote the sum of power injections $s_h$ towards bus 0 that go through line $(i,j)$. Note that $(\hat{S}(s),\hat{v}(s))$ is affine in $s$, and equals $(S,v)$ if and only if line loss $z_{ij}\ell_{ij}$ is 0 for $(i,j)\in\hE$.
	\begin{figure}[!ht]
     	\centering
     	\includegraphics[scale=0.4]{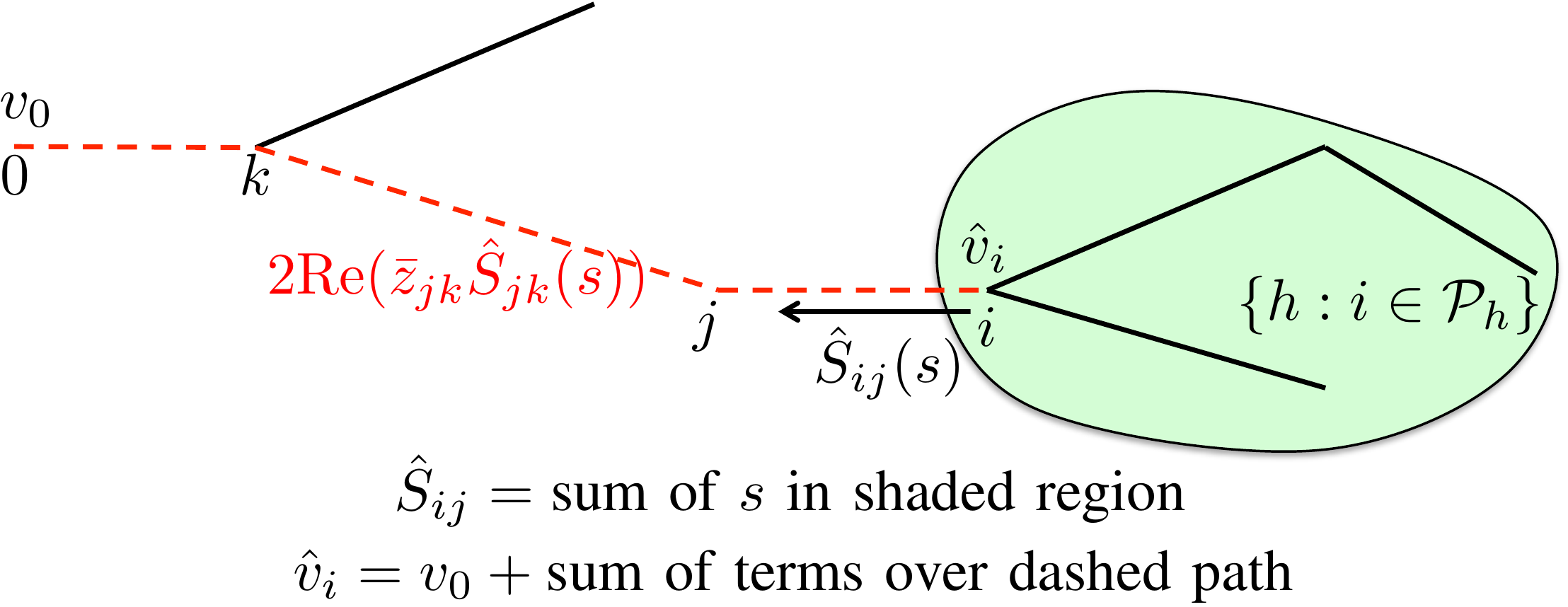}
      	\caption{Illustration of $\hat{S}_{ij}$ and $\hat{v}_i$. The shaded region is downstream of bus $i$, and contains the buses $\{h:i\in\hP_h\}$. Quantity $\hat{S}_{ij}(s)$ is defined to be the sum of bus injections $s$ in the shaded region. The dashed lines constitute the path $\hP_i$ from bus $i$ to bus 0. Quantity $\hat{v}_i(s)$ is defined as $v_0$ plus the terms $2\re(\bar{z}_{jk}\hat{S}_{jk}(s))$ over the dashed path.}
      	\label{fig: approximation}
	\end{figure}	
For two complex numbers $a,b\in\mathbb{C}$, let $a\leq b$ denote $\re(a)\leq \re(b)$ and $\im(a)\leq \im(b)$. For two vectors $a,b$ of the same dimension, $a\leq b$ denotes componentwise inequality. Define $<$, $>$, and $\geq$ similarly.
	\begin{lemma}\label{lemma: v}
	If $(s,S,v,\ell,s_0)$ satisfies \eqref{BFM S}--\eqref{BFM v} and $\ell\geq0$ componentwise, then $S\leq \hat{S}(s )$ and $v\leq \hat{v}(s)$.
	\end{lemma}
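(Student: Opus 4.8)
The plan is to prove the two inequalities in sequence --- $S\le\hat S(s)$ first, then $v\le\hat v(s)$ --- in both cases by comparing the exact branch flow recursion with the affine Linear DistFlow recursion term by term, the discrepancy between the two being the accumulated line losses $z_{ij}\ell_{ij}$, which are nonnegative (in the componentwise complex order) under the standing assumptions.

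For $S\le\hat S(s)$ I would induct on the tree from the leaves toward the root. If $i$ is a leaf, then \eqref{BFM S} gives $S_{ij}=s_i=\hat S_{ij}(s)$. For the inductive step, \eqref{BFM S} together with the induction hypothesis $S_{hi}\le\hat S_{hi}(s)$ for every child $h$ of $i$ yields
\[
S_{ij}=s_i+\!\!\sum_{h:\,h\to i}\bigl(S_{hi}-z_{hi}\ell_{hi}\bigr)\ \le\ s_i+\!\!\sum_{h:\,h\to i}\hat S_{hi}(s)\ =\ \hat S_{ij}(s),
\]
where the inequality uses $z_{hi}\ell_{hi}=r_{hi}\ell_{hi}+\ii\,x_{hi}\ell_{hi}\ge0$ (since $\ell_{hi}\ge0$ and, by A3, $r_{hi},x_{hi}>0$) and the last equality uses that $\hat S$ obeys $\hat S_{ij}(s)=s_i+\sum_{h:\,h\to i}\hat S_{hi}(s)$. (Unrolling the recursion one gets the closed form $S_{ij}=\hat S_{ij}(s)-\sum_{(h,k)\in\hE:\,i\in\hP_k}z_{hk}\ell_{hk}$, the subtracted term being the total loss over the subtree rooted at bus $i$.)

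For $v\le\hat v(s)$ I would telescope \eqref{BFM v} along the path $\hP_i$ from bus $i$ to bus $0$, using $v_0$ as the base value:
\[
v_i=v_0+\sum_{(j,k)\in\hP_i}\Bigl(2\re(\bar z_{jk}S_{jk})-|z_{jk}|^2\ell_{jk}\Bigr),
\]
and subtract this from the definition of $\hat v_i(s)$ to obtain
\[
\hat v_i(s)-v_i=\sum_{(j,k)\in\hP_i}\Bigl(2\,\re\bigl(\bar z_{jk}(\hat S_{jk}(s)-S_{jk})\bigr)+|z_{jk}|^2\ell_{jk}\Bigr).
\]
Each term $|z_{jk}|^2\ell_{jk}$ is nonnegative, and writing $\hat S_{jk}(s)-S_{jk}=\Delta P_{jk}+\ii\,\Delta Q_{jk}$ with $\Delta P_{jk},\Delta Q_{jk}\ge0$ (by the first part) we get $\re\bigl(\bar z_{jk}(\hat S_{jk}(s)-S_{jk})\bigr)=r_{jk}\Delta P_{jk}+x_{jk}\Delta Q_{jk}\ge0$ by A3; summing over the path gives $\hat v_i(s)\ge v_i$.

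The argument is mostly bookkeeping, and I expect no serious obstacle. The one point needing a little care --- and the only place where A3 genuinely enters, with positivity of \emph{both} $r_{jk}$ and $x_{jk}$ mattering --- is the interaction of the componentwise complex order with multiplication by an impedance: namely, if $w\in\mathbb{C}$ has $\re w\ge0$ and $\im w\ge0$, then $z_{jk}w\ge0$, and in particular $\re(\bar z_{jk}w)=r_{jk}\re w+x_{jk}\im w\ge0$. The only other thing to keep straight is that the leaves-to-root induction matches the support of $\hat S_{ij}$ (the subtree rooted at bus $i$), which is immediate from the definitions of $\hP_h$ and $\hat S_{ij}$.
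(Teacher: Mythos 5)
Your proof is correct and follows essentially the same route as the paper's: a leaves-to-root induction for $S\leq\hat S(s)$ using $z_{hi}\ell_{hi}\geq0$, followed by summing/telescoping \eqref{BFM v} along $\hP_i$ and using $\re\bigl(\bar z_{jk}(\hat S_{jk}(s)-S_{jk})\bigr)=r_{jk}\Delta P_{jk}+x_{jk}\Delta Q_{jk}\geq0$. One small caution about your closing remark: the general claim that $\re w\geq0$ and $\im w\geq0$ imply $z_{jk}w\geq0$ is false (e.g.\ $w=\ii$ gives $\re(z_{jk}w)=-x_{jk}<0$); your argument is unaffected because the only place you use $z_{jk}w\geq0$ has $w=\ell_{hi}$ real and nonnegative, and the inequality $\re(\bar z_{jk}w)\geq0$ for first-quadrant $w$ is a separate, directly verified computation rather than a consequence of $z_{jk}w\geq0$.
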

Lemma \ref{lemma: v} implies that $\hat{v}(s)$ and $\hat{S}(s)$ provide upper bounds on $v$ and $S$. The lemma is proved in Appendix \ref{app: lemma v}. Let $\hat{P}(s)$ and $\hat{Q}(s)$ denote the real and imaginary part of $\hat{S}(s)$ respectively. Then
	\begin{eqnarray*}
	\hat{P}_{ij}(s=p+\ii q) = \hat{P}_{ij}({p}) = \sum_{h:\,i\in\hP_h} p_h, && (i,j)\in\hE;\\
	\hat{Q}_{ij}(s=p+\ii q) = \hat{Q}_{ij}({q}) = \sum_{h:\,i\in\hP_h} q_h, && (i,j)\in\hE.
	\end{eqnarray*}
Assume that there exists $\overline{p}_i$ and $\overline{q}_i$ such that
    	\begin{equation*}\label{relax constraints}
	\mathcal{S}_i\subseteq\{s\in\mathbb{C}~|~\re(s)\leq\overline{p}_i,~\im(s)\leq\overline{q}_i\}
	\end{equation*}
for $i\in\hN^+$ as in Fig. \ref{fig: constraint}, i.e., $\re(s_i)$ and $\im(s_i)$ are upper bounded by $\overline{p}_i$ and $\overline{q}_i$ respectively.
	\begin{figure}[!ht]
     	\centering
     	\includegraphics[scale=0.4]{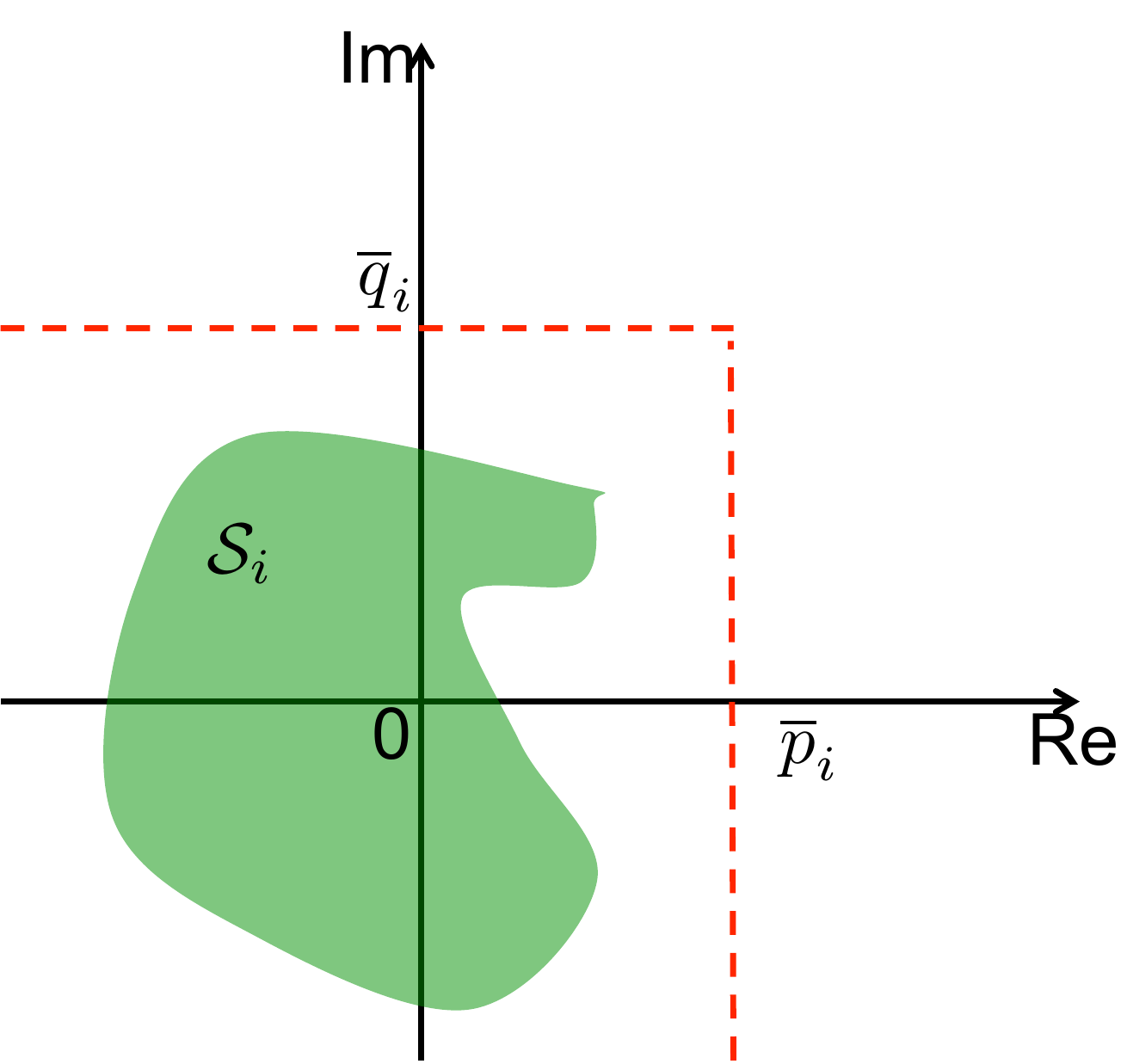}
      	\caption{We assume that $\mathcal{S}_i$ lies in the left bottom corner of $(\overline{p}_i,\overline{q}_i)$, but do not assume that $\mathcal{S}_i$ is convex or connected.}
      	\label{fig: constraint}
	\end{figure}
Note that we do {\it not} assume $\mathcal{S}_i$ to be convex or connected. Define $a^+\eqdef\max\{a,0\}$ for $a\in\mathbb{R}$, let $I:=\diag(1,1)$ denote the $2\times2$ identity matrix, and define
    $$u_i:=u_{ij}:=\begin{pmatrix} r_{ij} \\ x_{ij} \end{pmatrix}, \qquad
    \underline{A}_i := \underline{A}_{ij} := I - \frac{2}{\underline{v}_i} \begin{pmatrix} r_{ij} \\ x_{ij} \end{pmatrix} \left(\hat{P}^+_{ij}(\overline{p}) ~~\hat{Q}_{ij}^+(\overline{q})\right)    $$
for $(i,j)\in\hE$. Since $(i,j_1)\in\hE$ and $(i,j_2)\in\hE$ implies $j_1=j_2$, $\underline{A}_i$ and $u_i$ are well-defined for $i\in\hN^+$.
	
Further, let $\hL\eqdef\{l\in\hN~|~\nexists k\in\hN \text{ such that }k\rightarrow l\}$ denote the collection of leaf buses in the network. For a leaf bus $l\in\hL$, let $n_l+1$ denote the number of buses on path $\hP_l$, and suppose
    	$$\hP_l=\left\{l_{n_l}\rightarrow l_{n_l-1}\rightarrow \ldots\rightarrow l_1\rightarrow l_0\right\}$$
with $l_{n_l}=l$ and $l_0=0$ as in Fig. \ref{fig: path}.
	\begin{figure}[h]
     	\centering
     	\includegraphics[scale=0.4]{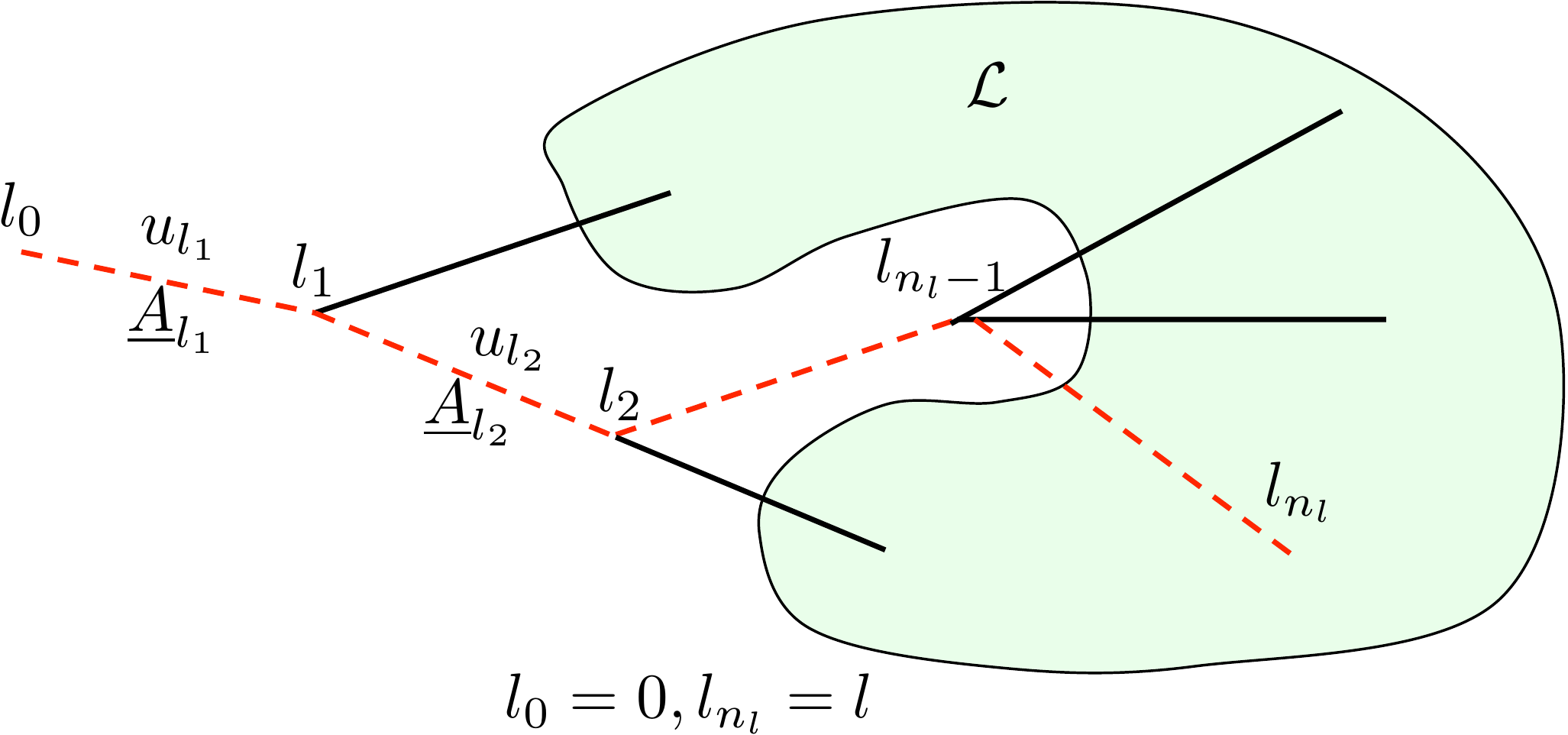}
      	\caption{The shaded region denotes the collection $\hL$ of leaf buses, and the path $\hP_l$ of a leaf bus $l\in\hL$ is illustrated by a dashed line.}
      	\label{fig: path}
	\end{figure}
Let
	$$\hS_{\mathrm{volt}} := \{s\in\mathbb{C}^n ~|~ \hat{v}_i(s)\leq \overline{v}_i \text{ for }i\in\hN^+\}$$
denote the power injection region where $\hat{v}(s)$ is upper bounded by $\overline{v}$. Since $v\leq\hat{v}(s)$ (Lemma \ref{lemma: v}), the set $\hS_{\mathrm{volt}}$ is a power injection region where voltage upper bounds do not bind.
	
The following theorem provides a sufficient condition that guarantees the exactness of SOCP.
    \begin{theorem}\label{thm: condition}
    Assume that $f_0$ is strictly increasing, and that there exists $\overline{p}_i$ and $\overline{q}_i$ such that $\mathcal{S}_i\subseteq\{s\in\mathbb{C}~|~\re(s)\leq\overline{p}_i,~\im(s)\leq\overline{q}_i\}$ for $i\in\hN^+$. Then SOCP is exact if the following conditions hold:
    \begin{itemize}
    \item[C1] $\underline{A}_{l_s}\underline{A}_{l_{s+1}} \cdots \underline{A}_{l_{t-1}}u_{l_t}>0$ for any $l\in\hL$ and any $s,t$ such that $1\leq s\leq t\leq n_l$;
    \item[C2] every SOCP solution $w=(s, S,v,\ell, s_0)$ satisfies $s\in\hS_{\mathrm{volt}}$.
    \end{itemize}
    \end{theorem}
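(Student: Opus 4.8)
The plan is to argue by contradiction. Suppose $w=(s,S,v,\ell,s_0)$ is an optimal solution of SOCP that violates \eqref{OPF ell}, say $\ell_{i_0j_0}>|S_{i_0j_0}|^2/v_{i_0}$ for some $(i_0,j_0)\in\hE$, and exhibit a point $w'=(s',S',v',\ell',s_0')$ that is feasible for SOCP with strictly smaller objective. Because $f_i$ and $\mathcal{S}_i$ may be nonconvex for $i\in\hN^+$, we cannot afford to move those injections; instead we set $s_i'=s_i$ for all $i\in\hN^+$ and let only $s_0'$ vary. The objective difference is then $f_0(\re(s_0'))-f_0(\re(s_0))$, and \eqref{OPF s} gives $\re(s_0)-\re(s_0')=\sum_{h\rightarrow0}\big[\re(S_{h0}'-S_{h0})+r_{h0}(\ell_{h0}-\ell_{h0}')\big]$. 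Hence it suffices to construct $w'$ satisfying $S'\ge S$ and $\ell'\le\ell$ componentwise, with at least one inequality strict on a line incident to bus $0$, since then $\re(s_0')<\re(s_0)$ and strictly increasing $f_0$ yields the contradiction.

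The construction of $w'$ is the crux. Keeping $s'=s$ on $\hN^+$, I would obtain $(S',v',\ell')$ as a solution of the full branch flow equations \eqref{BFM S}, \eqref{BFM v}, \eqref{BFM ell} --- i.e.\ with equality in \eqref{BFM ell} --- produced by a monotone iteration started at $w$ itself. Since $w$ satisfies the relaxed inequality \eqref{relax}, it is a ``super-solution'': recomputing $\ell_{ij}\leftarrow|S_{ij}|^2/v_i$, then $S$ forward from the leaves via \eqref{BFM S}, then $v$ from bus $0$ via \eqref{BFM v}, produces a nonincreasing sequence of losses, a nondecreasing sequence of flows, and a nondecreasing sequence of squared voltages (reducing a downstream loss lets more power reach bus $0$ and raises all voltages on the path to $0$). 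Two points need care: the iterates must stay well defined, which holds because $v$ never drops below the fixed positive bound $\underline v$ (assumption A4); and the loss sequence must genuinely be nonincreasing even though $|S_{ij}|^2=P_{ij}^2+Q_{ij}^2$ is not monotone in $(P_{ij},Q_{ij})$ componentwise. The latter is exactly what C1 controls: $u_{ij}$ is the direction in which the voltage drop and the reverse power flow on line $(i,j)$ respond to a unit loss reduction on that line, and $\underline A_{ij}=I-\frac{2}{\underline v_i}u_{ij}\big(\hat P^+_{ij}(\overline p)\ \ \hat Q^+_{ij}(\overline q)\big)$ is the corresponding propagation matrix evaluated at the worst-case operating point allowed by the injection bounds; C1 says every product $\underline A_{l_s}\cdots\underline A_{l_{t-1}}u_{l_t}$ along a leaf path is strictly positive, which is precisely the statement that reducing loss on a line only reduces the physically consistent loss on every line upstream of it. This positivity propagates the monotonicity up the tree and lets the iteration converge to a limit $w'$ with $\ell'\le\ell$, $S'\ge S$, $v'\ge v$, solving \eqref{BFM S}--\eqref{BFM ell} with $s'=s$ on $\hN^+$.

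With $w'$ in hand, feasibility for SOCP is routine: \eqref{OPF S}--\eqref{OPF ell} and \eqref{OPF s} hold by construction; \eqref{OPF constraint s} holds since $s_i'=s_i\in\mathcal{S}_i$; the lower voltage bounds hold since $v_i'\ge v_i\ge\underline v_i$; and the upper voltage bounds --- the only place C2 is used --- hold because $w'$ is an exact branch flow solution with $\ell'\ge0$, so Lemma~\ref{lemma: v} gives $v'\le\hat v(s')=\hat v(s)$, while C2 puts $s\in\hS_{\mathrm{volt}}$, i.e.\ $\hat v(s)\le\overline v$. Finally, on line $(i_0,j_0)$ we passed from strict slack in \eqref{relax} to equality, and propagating \eqref{BFM S} from $(i_0,j_0)$ toward bus $0$ shows this slack cannot be absorbed: $\re(S_{h0}'-S_{h0})$ or $r_{h0}(\ell_{h0}-\ell_{h0}')$ is strictly positive for the line $h\rightarrow0$ on $\hP_{i_0}$, so $\re(s_0')<\re(s_0)$, $C(w')<C(w)$, contradicting optimality of $w$.

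I expect the main obstacle to be making the monotone iteration of the second paragraph rigorous: that it is well defined at each step, that the loss/flow/voltage monotonicities are preserved simultaneously (this is exactly where the componentwise non-monotonicity of $|S|^2$ bites and where C1 is genuinely needed, through positivity of the matrix products $\underline A_{l_s}\cdots\underline A_{l_{t-1}}u_{l_t}$), and that the limit is a bona fide branch flow solution. A secondary subtlety is verifying that the strict slack at $(i_0,j_0)$ indeed survives all the way to a line incident to bus $0$.
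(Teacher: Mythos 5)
Your high-level strategy matches the paper's: argue by contradiction, keep $s'=s$ on $\hN^+$ so that nonconvexity of $f_i$ and $\mathcal{S}_i$ is harmless, use C1 to show reverse power flows at the root strictly increase, and use C2 together with Lemma \ref{lemma: v} to keep $v'\leq\overline{v}$. But the core of your construction has a genuine gap, in two respects. First, your stated sufficient condition --- build $w'$ with $S'\geq S$ \emph{and} $\ell'\leq\ell$ componentwise --- is generally unattainable: once the loss on the violating line is reduced, the upstream reverse flows $S_{k,k-1}$ increase, so on lines where power flows toward the root $|S_{k,k-1}'|^2$ increases and the recomputed $\ell_{k,k-1}'=|S_{k,k-1}'|^2/v_k$ can \emph{exceed} $\ell_{k,k-1}$. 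This is also exactly why your monotone iteration is not monotone after the first pass: $S^{(1)}\geq S^{(0)}$ does not give $|S^{(1)}|^2\leq|S^{(0)}|^2$, so $\ell^{(2)}$ need not be below $\ell^{(1)}$, and the claimed convergence to an exact branch-flow solution with $\ell'\leq\ell$ is unsupported. The correct mechanism is not componentwise monotonicity of $\ell$ but the fact that the gain in $\re(S)$ dominates the added loss, which is captured by the positivity of products of the matrices $B_k = I-\frac{2}{v_k}u_k\bigl(\tfrac{P_{k,k-1}+P_{k,k-1}'}{2}\ \ \tfrac{Q_{k,k-1}+Q_{k,k-1}'}{2}\bigr)$, sandwiched against $\underline{A}_k$ via a perturbation lemma (the paper's Lemma \ref{lemma: dynamical system}); you gesture at C1 playing this role but never supply the argument.

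Second, you make the problem harder than necessary by insisting $w'$ satisfy \eqref{OPF ell} with equality everywhere, which is what forces the unproved convergence argument. The paper's $w'$ is only required to be \emph{SOCP-feasible}: it is produced by a single forward sweep along one path using the \emph{old} voltages ($\ell_{k,k-1}'=|S_{k,k-1}'|^2/v_k$, not $v_k'$) followed by one backward sweep for $v'$; feasibility of \eqref{relax} then follows from $v'\geq v$, with no fixed point needed. Relatedly, the choice of which violation to repair matters: the paper selects, along a leaf path, the violated line $(l_m,l_{m-1})$ \emph{nearest the root}, so that every line between it and bus $0$ satisfies \eqref{OPF ell} with equality in $w$; this equality is what lets $\Delta\ell_{k,k-1}$ be expressed through $B_k$ and the recursion $(\Delta P_{k-1,k-2}\ \Delta Q_{k-1,k-2})^T=-B_k\cdots B_{m-1}u_m\Delta\ell_{m,m-1}$ close. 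Repairing an arbitrary violated line, or all of them at once as your iteration does, breaks that recursion. To fix your proof you would need to (i) localize the repair to one leaf path with the root-most violation, (ii) drop the requirement that $w'$ be an exact power-flow solution, and (iii) prove the positivity of the $B$-products from C1 via the perturbation lemma.
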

    
Theorem \ref{thm: condition} implies that if C2 holds, i.e., optimal power injections lie in the region $\hS_{\mathrm{volt}}$ where voltage upper bounds do not bind, then SOCP is exact under C1. The theorem is proved in Appendix \ref{app: condition}. C2 depends on SOCP solutions and cannot be checked a priori. This drawback motivates us to modify OPF such that the corresponding SOCP is exact under C1, as will be discussed in Section \ref{sec: modification}.

\subsection{Interpretation of C1}\label{sec: interpretation}
We illustrate C1 through a linear network as in Fig. \ref{fig: linear C1}. The collection of leaf buses is a singleton $\hL=\{n\}$, and the path from the only leaf bus $n$ to bus 0 is $\hP_n=\{n\rightarrow n-1\rightarrow\cdots\rightarrow 1\rightarrow 0\}$. Then, C1 takes the form
	$$\underline{A}_{s}\underline{A}_{s+1}\cdots\underline{A}_{t-1}u_t>0,\qquad 1\leq s\leq t\leq n.$$
That is, given any network segment $(s-1,t)$ where $1\leq s\leq t\leq n$, the multiplication $\underline{A}_s\underline{A}_{s+1}\cdots\underline{A}_{t-1}$ of $\underline{A}$ over the segment $(s-1,t-1)$ times $u_t$ is strictly positive.
	\begin{figure}[!h]
     	\centering
     	\includegraphics[scale=0.4]{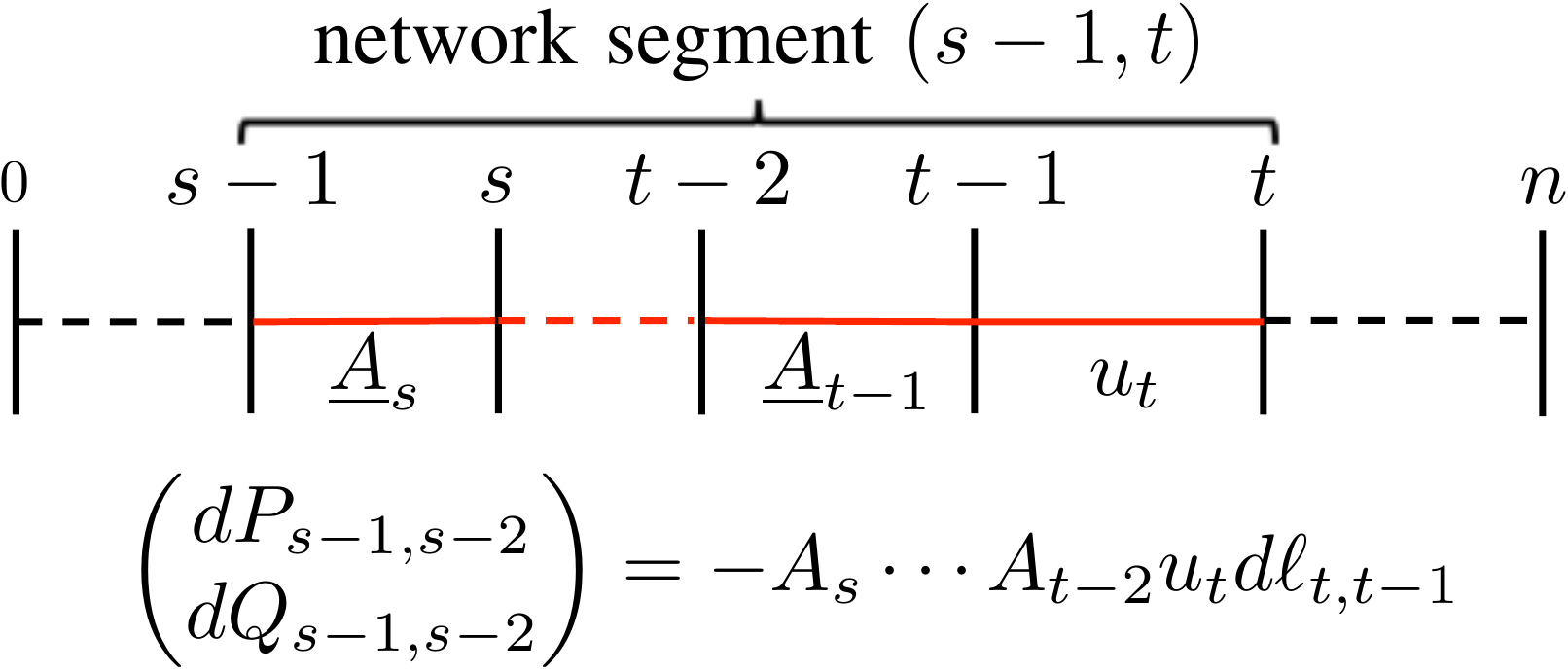}
      	\caption{In the above linear network, $\hL=\{n\}$ and $\hP_n=\{n\rightarrow n-1\rightarrow \cdots \rightarrow 1\rightarrow 0\}$. C1 requires that given any highlighted segment $(s-1,t)$ where $1\leq s\leq t\leq n$, the multiplication of $\underline{A}$ over $(s-1,t-1)$ times $u_t$ is strictly positive (componentwise).}
      	\label{fig: linear C1}
	\end{figure}

C1 only depends on SOCP parameters $(r,x,\overline{p},\overline{q},\underline{v})$ and therefore can be checked a priori. Furthermore, C1 can be checked efficiently since $\underline{A}$ and $u$ are simple functions of $(r,x,\overline{p},\overline{q},\underline{v})$ that can be computed in $O(n)$ time, and there are no more than $n(n+1)/2$ inequalities in C1.

\begin{proposition}\label{prop: smaller injections}
If $(\overline{p},\overline{q}) \leq (\overline{p}',\overline{q}')$ and C1 holds for $(r,x,\overline{p}',\overline{q}',\underline{v})$, then C1 also holds for $(r,x,\overline{p},\overline{q},\underline{v})$.
\end{proposition}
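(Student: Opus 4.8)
The plan is to prove the stronger claim that for every leaf $l\in\hL$ and all $s,t$ with $1\le s\le t\le n_l$, the vector $B_{s,t}:=\underline{A}_{l_s}\underline{A}_{l_{s+1}}\cdots\underline{A}_{l_{t-1}}u_{l_t}$ formed from the smaller bounds $(\overline{p},\overline{q})$ dominates, componentwise, the analogous vector $B'_{s,t}$ formed from $(\overline{p}',\overline{q}')$; since C1 for $(r,x,\overline{p}',\overline{q}',\underline{v})$ gives $B'_{s,t}>0$, this forces $B_{s,t}>0$, which is precisely C1 for $(r,x,\overline{p},\overline{q},\underline{v})$. Fix a leaf $l$, write $A_k$ and $A'_k$ for the matrix $\underline{A}_{l_k}$ formed from $(\overline{p},\overline{q})$ and from $(\overline{p}',\overline{q}')$ respectively, and set $M_{s,t}:=A_s\cdots A_{t-1}$ with empty product $=I$, so that $B_{s,t}=M_{s,t}u_{l_t}$; define $M'_{s,t}$ analogously. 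A routine first observation is that $\hat{P}_{l_j}$ and $\hat{Q}_{l_j}$ are increasing affine functions of $(\overline{p},\overline{q})$ and $x\mapsto x^+$ is nondecreasing, so $\hat{P}^+_{l_j}(\overline{p})\le\hat{P}^+_{l_j}(\overline{p}')$ and $\hat{Q}^+_{l_j}(\overline{q})\le\hat{Q}^+_{l_j}(\overline{q}')$; hence $A_j-A'_j=\tfrac{2}{\underline{v}_{l_j}}u_{l_j}\!\left(\hat{P}^+_{l_j}(\overline{p}')-\hat{P}^+_{l_j}(\overline{p})~~\hat{Q}^+_{l_j}(\overline{q}')-\hat{Q}^+_{l_j}(\overline{q})\right)$ is a nonnegative matrix of rank at most one whose column space lies in $\mathrm{span}\{u_{l_j}\}$, where $u_{l_j}>0$ by A3 and $\underline{v}_{l_j}>0$ by A4.

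The core step is to use the hybrid telescoping identity
\[
M_{s,t}-M'_{s,t}=\sum_{j=s}^{t-1}M_{s,j}\,(A_j-A'_j)\,M'_{j+1,t}
\]
rather than the one-step recursion $B_{s,t}=A_sB_{s+1,t}$. Multiplying on the right by $u_{l_t}$ and using $M'_{j+1,t}u_{l_t}=B'_{j+1,t}$: by the structure of $A_j-A'_j$ noted above and the hypothesis $B'_{j+1,t}>0$ (this is exactly where C1 for $(\overline{p}',\overline{q}')$ enters), we get $(A_j-A'_j)B'_{j+1,t}=\sigma_j u_{l_j}$ for some scalar $\sigma_j\ge0$, so $M_{s,j}(A_j-A'_j)B'_{j+1,t}=\sigma_j M_{s,j}u_{l_j}=\sigma_j B_{s,j}$, and therefore
\[
B_{s,t}-B'_{s,t}=\sum_{j=s}^{t-1}\sigma_j\,B_{s,j}.
\]

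It then remains to induct on the segment length $d:=t-s$. For $d=0$, $B_{s,s}=u_{l_s}>0$ by A3 and $B_{s,s}-B'_{s,s}=0$. For $d\ge1$, every index $j$ appearing in the sum above yields a strictly shorter segment $(s,j)$, so the inductive hypothesis gives $B_{s,j}\ge B'_{s,j}>0$; combined with $\sigma_j\ge0$ this makes the right-hand side of the last display a nonnegative vector, whence $B_{s,t}\ge B'_{s,t}$, and then $B_{s,t}>0$ because $B'_{s,t}>0$ by C1 for $(\overline{p}',\overline{q}')$. Since $l$ was an arbitrary leaf, C1 holds for $(r,x,\overline{p},\overline{q},\underline{v})$.

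The step I expect to be the main obstacle — and the reason the telescoping expansion is needed — is that the naive monotonicity induction fails: one cannot deduce $B_{s,t}\ge B'_{s,t}$ from $B_{s+1,t}\ge B'_{s+1,t}$ through $B_{s,t}=A_sB_{s+1,t}$, because $A_s$ has nonpositive off-diagonal entries and can send the nonnegative vector $B_{s+1,t}-B'_{s+1,t}$ to a vector with a negative component. The telescoping identity circumvents this by never applying a single $\underline{A}$ to a difference of $B$'s; instead each summand collapses to a nonnegative multiple of a strictly shorter $B_{s,j}$, which is positive by induction, and the hypothesis that C1 holds for the larger bounds is used only to guarantee $\sigma_j\ge0$.
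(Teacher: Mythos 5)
Your proof is correct and follows essentially the same route as the paper: the paper reduces Proposition~\ref{prop: smaller injections} to Lemma~\ref{lemma: dynamical system}, whose proof swaps $\underline{A}'_k$ for $\underline{A}_k$ one position at a time and bounds each correction term $\bigl(b_k^T\underline{A}'_{k+1}\cdots\underline{A}'_{t-1}u_t\bigr)\underline{A}_s\cdots\underline{A}_{k-1}u_k$ below by zero via induction on $t-s$, which is exactly your telescoping identity written as a chain of inequalities rather than a single sum. The key ingredients --- the rank-one structure $\underline{A}_k-\underline{A}'_k=u_kb_k^T$ with $b_k\ge0$, the collapse of each correction to a nonnegative multiple of a shorter product $B_{s,j}$, and the induction on segment length --- coincide with the paper's.
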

Proposition \ref{prop: smaller injections} implies that the smaller power injections, the more likely C1 holds. It is proved in Appendix \ref{app: smaller injections}.

\begin{proposition}\label{pro: no reverse}
If $(\overline{p},\overline{q})\leq0$, then C1 holds.
\end{proposition}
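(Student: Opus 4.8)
The plan is to show that when $(\overline{p},\overline{q})\leq 0$, each matrix $\underline{A}_i$ reduces to the identity, which makes all the products appearing in C1 collapse to a single vector $u_{l_t}=(r_{l_t l_{t-1}},x_{l_t l_{t-1}})^\top$, and then invoke assumption A3. Concretely, recall that
$$\underline{A}_i = I - \frac{2}{\underline{v}_i}\begin{pmatrix} r_{ij}\\ x_{ij}\end{pmatrix}\left(\hat{P}^+_{ij}(\overline{p})~~\hat{Q}^+_{ij}(\overline{q})\right),$$
and that $\hat{P}_{ij}(\overline{p})=\sum_{h:\,i\in\hP_h}\overline{p}_h$ and $\hat{Q}_{ij}(\overline{q})=\sum_{h:\,i\in\hP_h}\overline{q}_h$. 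First I would observe that if $\overline{p}_h\leq 0$ for every $h\in\hN^+$, then $\hat{P}_{ij}(\overline{p})\leq 0$, hence $\hat{P}^+_{ij}(\overline{p})=\max\{\hat{P}_{ij}(\overline{p}),0\}=0$; similarly $\hat{Q}^+_{ij}(\overline{q})=0$. Therefore the rank-one correction term vanishes and $\underline{A}_i=I$ for every $i\in\hN^+$.

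Given this, for any leaf $l\in\hL$ and any $1\leq s\leq t\leq n_l$, the product $\underline{A}_{l_s}\underline{A}_{l_{s+1}}\cdots\underline{A}_{l_{t-1}}u_{l_t}$ equals simply $u_{l_t}=(r_{l_t l_{t-1}},\,x_{l_t l_{t-1}})^\top$ (with the convention that the empty product when $s=t$ is $I$, so the value is again $u_{l_t}$). By assumption A3, $r_{l_t l_{t-1}}>0$ and $x_{l_t l_{t-1}}>0$, so $u_{l_t}>0$ componentwise. Hence C1 holds.

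There is essentially no obstacle here: the only mild point to be careful about is the edge case $s=t$, where the matrix product is empty and should be read as the identity, so that C1 still asserts $u_{l_t}>0$; this is covered by A3 exactly as above. An alternative, slightly more robust route that avoids discussing the empty product is to note that $\underline{A}_i = I$ has nonnegative entries and, more importantly, maps the open positive orthant into itself (indeed it fixes every vector), so that multiplying $u_{l_t}>0$ on the left by any number of copies of $\underline{A}$ preserves strict positivity; but since each factor is literally the identity this is immediate. Either way the conclusion follows directly from A3, and no computation beyond the sign bookkeeping above is needed.
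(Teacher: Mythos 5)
Your proof is correct and follows essentially the same route as the paper: since $(\overline{p},\overline{q})\leq 0$ forces $\hat{P}^+_{ij}(\overline{p})=\hat{Q}^+_{ij}(\overline{q})=0$, each $\underline{A}_i$ is the identity, and the products in C1 reduce to $u_{l_t}>0$ by A3. The only addition beyond the paper's argument is your explicit handling of the empty-product case $s=t$, which is a harmless clarification.
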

Proposition \ref{pro: no reverse} implies that if every bus only consumes real and reactive power, then C1 holds. This is because when $(\overline{p},\overline{q})\leq0$, the quantities $\hat{P}_{ij}(\overline{p})\leq0$, $\hat{Q}_{ij}(\overline{q})\leq0$ for $(i,j)\in\hE$. It follows that $\underline{A}_i=I$ for $i\in\hN^+$. Hence, $\underline{A}_{l_s}\cdots \underline{A}_{l_{t-1}}u_{l_t} = u_{l_t}>0$ for any $l\in\hL$ and any $s,t$ such that $1\leq s\leq t\leq n_l$.

For practical parameter ranges of $(r,x,\overline{p},\overline{q},\underline{v})$, line resistance and reactance $r_{ij},x_{ij}\ll1$ for $(i,j)\in\hE$, line flow $\hat{P}_{ij}(\overline{p}),\hat{Q}_{ij}(\overline{q})=O(1)$ for $(i,j)\in\hE$, and voltage lower bound $\underline{v}_i\approx1$ for $i\in\hN^+$. Hence, $\underline{A}_i$ is close to $I$ for $i\in\hN^+$, and therefore C1 is likely to hold. As will be seen in the numeric studies in Section \ref{sec: case study}, C1 holds for several test networks, including those with big $(\overline{p},\overline{q})$ (high penetration of distributed generation).

C1 has a physical interpretation. Recall that $S_{k,k-1}$ denotes the reverse power flow on line $(k,k-1)$ for $k=1,\ldots,n$ and introduce $S_{0,-1}:=-s_0$ for convenience. If the power loss on a line is reduced, then all upstream reverse power flows seem nature to increase. More specifically, the power loss on line $(t,t-1)$ where $t\in\{1,2,\ldots,n\}$ can be reduced by decreasing the current $\ell_{t,t-1}$ by $d\ell_{t,t-1}<0$, and physical intuition tells us that reverse power flow $S_{s-1,s-2}$ is likely to increase, i.e., $dS_{s-1,s-2}>0$, for $s=1,2,\ldots,t$. Now assume that indeed $dS_{s-1,s-2} = dP_{s-1,s-2} + \ii dQ_{s-1,s-2}>0$ for $s=1,\ldots,t$. It can be verified that
	$ (dP_{t-1,t-2} ~~dQ_{t-1,t-2})^T = -u_t d\ell_{t,t-1}, $
and one can compute from \eqref{PF} the Jacobian matrix
	$$A_k:=\begin{pmatrix}
	\frac{\partial P_{k-1,k-2} } {\partial P_{k,k-1} } & \frac{\partial P_{k-1,k-2} } {\partial Q_{k,k-1} } \\
	\frac{\partial Q_{k-1,k-2} } {\partial P_{k,k-1} } & \frac{\partial Q_{k-1,k-2} } {\partial Q_{k,k-1} }
	\end{pmatrix}
	= I - \frac{2}{v_i}\begin{pmatrix} r_{k,k-1} \\ x_{k,k-1} \end{pmatrix}
	(P_{k,k-1} ~~Q_{k,k-1})$$
for $k=1,\ldots,n$. Therefore reverse power flow $S_{s-1,s-2}$ changes by $dS_{s-1,s-2}=dP_{s-1,s-2}+\ii dQ_{s-1,s-2}$ where
	$$(dP_{s-1,s-2} ~~dQ_{s-1,s-2})^T = -A_sA_{s+1}\cdots A_{t-1}u_t d\ell_{t,t-1},$$
according to the chain rule, for $s=1,\ldots,t$. Then, $dS_{s-1,s-2}>0$ implies
	\begin{equation}\label{interpretation}
	A_sA_{s+1}\cdots A_{t-1}u_t >0
	\end{equation}
for $s=1,2,\ldots,t$. Note that $\underline{A}_k$ is obtained by replacing $(P,Q,v)$ in $A_k$ by $(\hat{P}^+(\overline{p}),\hat{Q}^+(\overline{q}),\underline{v})$ (so that $\underline{A}_k$ only depends on SOCP parameters), and then \eqref{interpretation} becomes C1.

\subsection{Proof idea}\label{sec: idea}
We present the proof idea of Theorem \ref{thm: condition} via a 3-bus linear network as in Fig. \ref{fig: 3-bus},
	\begin{figure}[!h]
     	\centering
     	\includegraphics[scale=0.4]{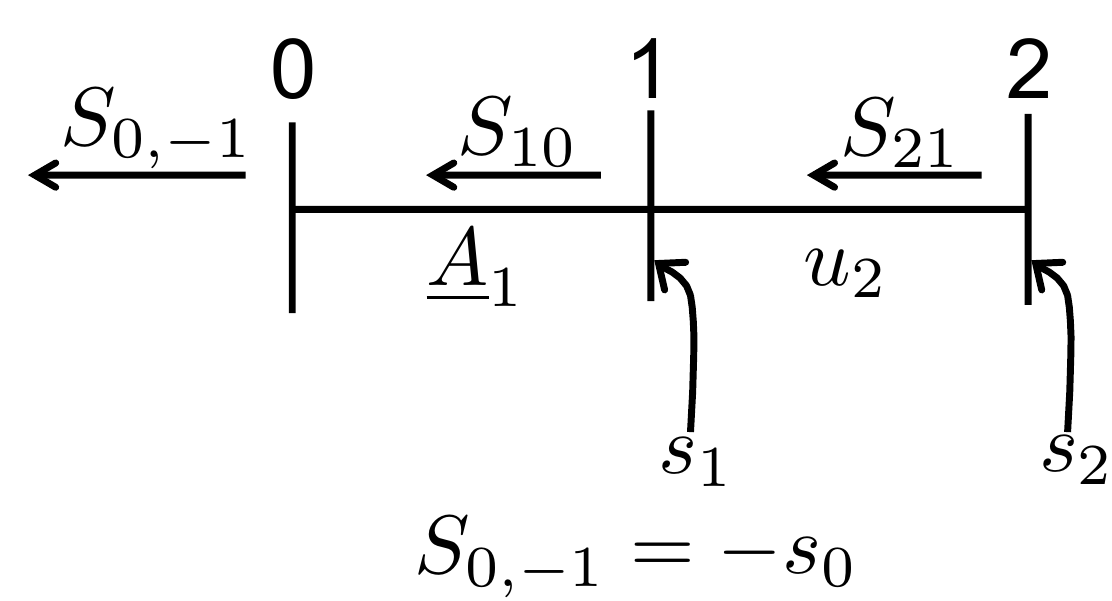}
      	\caption{A 3-bus linear network.}
      	\label{fig: 3-bus}
	\end{figure}
and the proof for general tree networks is provided in Appendix \ref{app: condition}. Assume that $f_0$ is strictly increasing, and that C1 and C2 hold. If SOCP is not exact, then there exists an SOCP solution $w=(s,S,v,\ell,s_0)$ that violates \eqref{OPF ell}. We are going to construct another feasible point $w'=(s',S',v',\ell',s_0')$ of SOCP that has a smaller objective value than $w$. This contradicts with $w$ being optimal, and therefore SOCP is exact.

The construction of $w'$ is as follows. There are two ways \eqref{OPF ell} gets violated: 1) violated on line $(1,0)$; 2) satisfied on line $(1,0)$ but violated on line $(2,1)$. To illustrate the proof idea, we focus on the second case, i.e., the case where $\ell_{10}=|S_{10}|^2/v_1$ and $\ell_{21}>|S_{21}|^2/v_2$. In this case, the construction of $w'$ is
	\begin{subequations}\label{construction}
	\begin{align}
	\textbf{Initialization:} \qquad
	& s'=s; \label{construct s}\\
	& S_{21}'=S_{21}; \label{construct S21}\\
	\textbf{Forward sweep:} \qquad
	& \ell_{21}'=|S_{21}'|^2/v_2; \label{construct ell21}\\
	& S_{10}'=S_{21}'-z_{21}\ell_{21}'+s_1';\label{construct S10}\\
	& \ell_{10}'=|S_{10}'|^2/v_1; \label{construct ell10}\\
	& S_{0,-1}' = S_{10}'-z_{10}\ell_{10}'; \label{construct S0}\\
	\textbf{Backward sweep:} \qquad
	& v_1' = v_0 + 2\re(\bar{z}_{10}S_{10}') - |z_{10}|^2\ell_{10}'; \label{construct v1}\\
	& v_2' = v_1' + 2\re(\bar{z}_{21}S_{21}') - |z_{21}|^2\ell_{21}' \label{construct v2}
	\end{align}
	\end{subequations}
where $s_0'=-S_{0,-1}'$. The construction consists of three steps:
\begin{itemize}
\item[S1] In the initialization step, $s'$ and $S_{21}'$ are initialized as the corresponding values in $w$. Hence, $w'$ satisfies \eqref{OPF constraint s}.
\item[S2] In the forward sweep step, $\ell_{k,k-1}'$ and $S_{k-1,k-2}'$ are recursively constructed for $k=2,1$ by alternatively applying \eqref{OPF ell} (with $v'$ replaced by $v$) and \eqref{OPF S}/\eqref{OPF s}. This recursive construction updates $\ell'$ and $S'$ alternatively along the path $\hP_2$ from bus 2 to bus 0, and is therefore called a {\it forward sweep}. It is clear that $w'$ satisfies \eqref{OPF S} and \eqref{OPF s}. Besides, $w'$ satisfies \eqref{relax} if and only if $v'\geq v$.
\item[S3] In the backward sweep step, $v_k'$ is recursively constructed for $k=1,2$ by applying \eqref{OPF v}. This recursive construction updates $v'$ along the negative direction of $\hP_2$ from bus 0 to bus 2, and is therefore called a {\it backward sweep}. It is clear that $w'$ satisfies \eqref{OPF v}.
\end{itemize}

We will show that $w'$ is feasible for SOCP and has a smaller objective value than $w$. This result follows from the following two claims.
\begin{claim}\label{claim C1}
$\text{C1} ~\Rightarrow~ S_{k,k-1}'>S_{k,k-1} \text{ for } k=0,1 ~\Rightarrow~ v'\geq v$.
\end{claim}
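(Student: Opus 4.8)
The plan is to prove Claim \ref{claim C1} in two implications, exactly as stated, focusing on the $3$-bus network with $\ell_{10}=|S_{10}|^2/v_1$ and $\ell_{21}>|S_{21}|^2/v_2$.

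\textbf{First implication: C1 $\Rightarrow S_{k,k-1}' > S_{k,k-1}$ for $k=0,1$.}
I would work backward along the path, from bus $2$ toward bus $0$, comparing the constructed quantities with the originals. The starting observation is that $S_{21}'=S_{21}$ by \eqref{construct S21}, but $\ell_{21}'=|S_{21}'|^2/v_2 = |S_{21}|^2/v_2 < \ell_{21}$ since line $(2,1)$ violates \eqref{OPF ell}. Write $d\ell_{21}:=\ell_{21}'-\ell_{21}<0$. Then from \eqref{construct S10} and \eqref{OPF S},
\[
S_{10}'-S_{10} = (S_{21}'-S_{21}) - z_{21}(\ell_{21}'-\ell_{21}) = -z_{21}\,d\ell_{21},
\]
and since $z_{21}=r_{21}+\ii x_{21}$ with $r_{21},x_{21}>0$ (assumption A3) and $d\ell_{21}<0$, this gives $S_{10}'-S_{10} = -u_{21}\,d\ell_{21} > 0$ componentwise, i.e.\ $(P_{10}'-P_{10},\,Q_{10}'-Q_{10})^T = -u_2\,d\ell_{21}>0$. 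This is the base case and matches the identity $(dP_{t-1,t-2}\;dQ_{t-1,t-2})^T = -u_t\,d\ell_{t,t-1}$ from the interpretation section. For the next step, I need to propagate this increase through the recursion $S_{0,-1}'=S_{10}'-z_{10}\ell_{10}'$, $\ell_{10}'=|S_{10}'|^2/v_1$, versus $S_{0,-1}=S_{10}-z_{10}\ell_{10}$, $\ell_{10}=|S_{10}|^2/v_1$. Here the key linearization step is to express $\ell_{10}'-\ell_{10} = (|S_{10}'|^2-|S_{10}|^2)/v_1$ and bound it: since $|S_{10}'|^2 = (P_{10}')^2+(Q_{10}')^2$, and $P_{10}'>P_{10}$, $Q_{10}'>Q_{10}$, but the sign of $\ell_{10}'-\ell_{10}$ depends on the signs of $P_{10},Q_{10}$. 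The right move is to use that $P_{10}\le \hat P_{10}(\overline p)$ and $Q_{10}\le\hat Q_{10}(\overline q)$ (Lemma \ref{lemma: v} applied to $w$, together with the injection bounds $\mathcal S_i\subseteq\{\re\le\overline p_i,\im\le\overline q_i\}$), and $v_1\ge\underline v_1$, to show that
\[
\begin{pmatrix}P_{0,-1}'-P_{0,-1}\\ Q_{0,-1}'-Q_{0,-1}\end{pmatrix}
= \Bigl(I - \tfrac{2}{v_1}u_{10}(P_{10}^\star\;\;Q_{10}^\star)\Bigr)\begin{pmatrix}P_{10}'-P_{10}\\ Q_{10}'-Q_{10}\end{pmatrix}
\]
for some intermediate point $(P_{10}^\star,Q_{10}^\star)$ between $(P_{10},Q_{10})$ and $(P_{10}',P_{10}')$ — i.e.\ a mean-value/secant form — and then dominate this matrix by $\underline A_{10}$ applied to a positive vector, invoking C1 (with $s=t=1$, giving $\underline A$-product empty, so really the relevant inequality is $\underline A_{10}u_{10}>0$ for the composite, but more precisely $\underline A_{l_s}\cdots\underline A_{l_{t-1}}u_{l_t}>0$ with the path indices). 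Since the entries of $u_{10}(P_{10}^\star\;Q_{10}^\star)$ are bounded above by those of $u_{10}(\hat P_{10}^+(\overline p)\;\hat Q_{10}^+(\overline q))$ and $1/v_1\le 1/\underline v_1$, and the vector being multiplied is positive, monotonicity of the linear map in its off-diagonal entries yields $S_{0,-1}'-S_{0,-1}\ge \underline A_{10}(S_{10}'-S_{10})>0$ by C1. This is the crux.

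\textbf{Second implication: $S_{k,k-1}'>S_{k,k-1}$ for $k=0,1$ $\Rightarrow v'\ge v$.}
This one is more direct. From \eqref{construct v1}, $v_1' = v_0 + 2\re(\bar z_{10}S_{10}') - |z_{10}|^2\ell_{10}'$, and comparing with \eqref{OPF v} for $w$,
\[
v_1'-v_1 = 2\re\bigl(\bar z_{10}(S_{10}'-S_{10})\bigr) - |z_{10}|^2(\ell_{10}'-\ell_{10}).
\]
I would instead use the alternative relation obtained by substituting the flow-update equations: since $S_{0,-1}'=S_{10}'-z_{10}\ell_{10}'$ and $S_{0,-1}=S_{10}-z_{10}\ell_{10}$, one has $z_{10}\ell_{10}' = S_{10}'-S_{0,-1}'$ and similarly for the unprimed, so $v_1'-v_1 = 2\re(\bar z_{10}(S_{10}'-S_{10})) - \re(\bar z_{10}\cdot z_{10}\ell_{10}'/\ldots)$ — actually cleaner is to note $2\re(\bar z_{ij}S_{ij}) - |z_{ij}|^2\ell_{ij} = \re(\bar z_{ij}(S_{ij} + (S_{ij}-z_{ij}\ell_{ij}))) = \re(\bar z_{ij}(S_{ij}+S_{\text{downstream-out}}))$, i.e.\ $v_1'-v_1 = \re\bigl(\bar z_{10}\bigl((S_{10}'-S_{10}) + (S_{0,-1}'-S_{0,-1})\bigr)\bigr)$. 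Since $r_{10},x_{10}>0$ and both $S_{10}'-S_{10}>0$ and $S_{0,-1}'-S_{0,-1}>0$ componentwise, the real part of $\bar z_{10}$ times a positive complex number is positive, so $v_1'\ge v_1$ (in fact $>$). For $v_2'$: from \eqref{construct v2}, $v_2'-v_2 = (v_1'-v_1) + 2\re(\bar z_{21}(S_{21}'-S_{21})) - |z_{21}|^2(\ell_{21}'-\ell_{21})$; here $S_{21}'=S_{21}$ so the middle term vanishes, and $\ell_{21}'-\ell_{21}<0$ so $-|z_{21}|^2(\ell_{21}'-\ell_{21})>0$, and we already have $v_1'-v_1\ge0$; hence $v_2'\ge v_2$. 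Combined, $v'\ge v$. (For the general tree this becomes an induction down each path, using the positivity of all the upstream flow differences established in the first implication.)

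\textbf{Main obstacle.} The delicate step is the first implication — specifically, turning the exact nonlinear recursion relating $(S'_{k-1,k-2},\ell'_{k,k-1})$ to $(S_{k-1,k-2},\ell_{k,k-1})$ into a \emph{linear} comparison dominated by the constant matrices $\underline A_k$, and getting the inequality directions right. The subtlety is that $\ell'_{k,k-1}-\ell_{k,k-1}$ is governed by $|S'|^2-|S|^2$, whose sign is not fixed; one must use the a priori upper bounds $\hat P^+,\hat Q^+$ and the lower bound $\underline v$ to replace the true (solution-dependent) Jacobian $A_k$ by the parameter-only matrix $\underline A_k$, and then argue that the product $\underline A_{l_s}\cdots\underline A_{l_{t-1}}u_{l_t}>0$ from C1 is precisely what certifies that positivity is preserved as we sweep upstream. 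Handling the case where intermediate entries of the secant Jacobian could be negative (so that the map is not obviously monotone) requires care — the resolution is that C1's positivity of \emph{every} partial product $\underline A_{l_s}\cdots\underline A_{l_{t-1}}u_{l_t}$, not just the full one, is exactly the inductive hypothesis that makes the telescoping work. Everything else (feasibility of $w'$ for \eqref{OPF constraint s}, \eqref{OPF S}, \eqref{OPF s}, \eqref{OPF v}, and the objective comparison via $f_0$ strictly increasing and $s'=s$) is routine bookkeeping.
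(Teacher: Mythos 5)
Your proposal is correct and follows essentially the same route as the paper's proof: the same base step $\Delta S_{10}=-u_2\Delta\ell_{21}>0$, the same secant (midpoint) Jacobian $B_1$ dominated by $\underline{A}_1$ via $P_{10},P_{10}'\leq\hat{P}_{10}^+(\overline{p})$ and $v_1\geq\underline{v}_1$ so that C1's $\underline{A}_1u_2>0$ yields $\Delta S_{0,-1}>0$, and the same regrouping $\Delta v_1=\re\bigl(\bar{z}_{10}(\Delta S_{10}+\Delta S_{0,-1})\bigr)>0$. The only differences are cosmetic (entrywise matrix domination in place of the paper's rank-one decomposition $B_1-\underline{A}_1=u_1b_1^T$ with $b_1\geq0$, and a minor notational slip in naming the relevant C1 inequality).
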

\begin{claim}\label{claim C2}
$\text{C2} ~\Rightarrow~ v'\leq\overline{v}$.
\end{claim}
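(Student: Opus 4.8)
The plan is to deduce Claim \ref{claim C2} directly from Lemma \ref{lemma: v}, using two facts about the constructed point $w'$: it still satisfies the lossy branch flow equations, and its injection vector coincides with that of $w$. First I would verify that $w'=(s',S',v',\ell',s_0')$ satisfies \eqref{BFM S}--\eqref{BFM v}. The forward sweep \eqref{construct S10}--\eqref{construct S0} is precisely the power balance recursion, so $w'$ satisfies \eqref{OPF S}--\eqref{OPF s} (which are \eqref{BFM S}--\eqref{BFM s}); the backward sweep \eqref{construct v1}--\eqref{construct v2} is precisely \eqref{OPF v} (which is \eqref{BFM v}).

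Next I would check that $\ell'\geq0$ componentwise: the forward sweep sets $\ell_{k,k-1}'=|S_{k,k-1}'|^2/v_k$, and since $w$ is feasible for SOCP it obeys \eqref{OPF constraint v}, whence $v_k\geq\underline{v}_k>0$ by Assumption A4, so $\ell_{k,k-1}'\geq0$. With \eqref{BFM S}--\eqref{BFM v} and $\ell'\geq0$ established, Lemma \ref{lemma: v} applies to $w'$ and gives $v'\leq\hat{v}(s')$. Since the initialization \eqref{construct s} sets $s'=s$, we have $\hat{v}(s')=\hat{v}(s)$; and because $w$ is an SOCP solution, hypothesis C2 gives $s\in\hS_{\mathrm{volt}}$, i.e.\ $\hat{v}_i(s)\leq\overline{v}_i$ for all $i\in\hN^+$. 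Chaining these, $v_i'\leq\hat{v}_i(s')=\hat{v}_i(s)\leq\overline{v}_i$ for $i\in\hN^+$, which is exactly $v'\leq\overline{v}$.

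I do not expect a real obstacle here; the only points requiring care are that each defining relation of $w'$ must be recognized as one of the branch flow equations so that Lemma \ref{lemma: v} is legitimately applicable, and that Assumption A4 must be invoked to guarantee $v>0$ and hence $\ell'\geq0$. The identical argument transfers verbatim to general tree networks, because in the general construction $s'$ is again initialized to $s$ and $w'$ is again built to satisfy the branch flow model with $\ell'\geq0$; the tree topology only enters the companion Claim \ref{claim C1} (through the matrices $\underline{A}_i$ and C1), not Claim \ref{claim C2}.
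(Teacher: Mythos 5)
Your proof is correct and follows the same route as the paper, which simply writes ``it follows from Lemma \ref{lemma: v} that $v'\leq\hat{v}(s')=\hat{v}(s)\leq\overline{v}$''; you have merely filled in the (correct) details of why Lemma \ref{lemma: v} applies to $w'$, namely that the construction enforces \eqref{BFM S}--\eqref{BFM v} and $\ell'\geq0$.
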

It follows from Claims \ref{claim C1} and \ref{claim C2} that $\underline{v}\leq v\leq v'\leq\overline{v}$, and therefore $w'$ satisfies \eqref{OPF constraint v}. As discussed in Step S2, $w'$ also satisfies \eqref{relax} since $v'\geq v$. Hence, $w'$ is feasible for SOCP. The point $w'$ has a smaller objective value than $w$ because
	\begin{eqnarray*}
	\sum_{i\in\hN} f_i(\re(s_i')) - \sum_{i\in\hN} f_i(\re(s_i)) = f_0(-\re(S_{0,-1}')) - f_0(-\re(S_{0,-1})) < 0.
	\end{eqnarray*}
This contradicts the optimality of $w$, and therefore SOCP is exact. To complete the proof, we are left to prove Claims \ref{claim C1} and \ref{claim C2}.

\noindent{\it Proof of Claim \ref{claim C1}: }
First show that C1 implies $S_{k,k-1}'>S_{k,k-1}$ for $k=0,1$. Define $\Delta s:=s'-s$, $\Delta S:=S'-S$, $\Delta \ell:=\ell'-\ell$, and $\Delta v:=v'-v$. It is assumed that $\ell_{21}>|S_{21}|^2/v_2$, and therefore $\ell_{21}'=|S_{21}'|^2/v_2=|S_{21}|^2/v_2<\ell_{21}$, i.e., $\Delta \ell_{21}<0$. It follows that
	$$\Delta S_{10} = \Delta S_{21}' - z_{21}\Delta\ell_{21}+\Delta s_1 = - z_{21}\Delta\ell_{21} > 0.$$
Recalling that $S=P+\ii Q$ and that $u_2=(r_{21} ~x_{21})^T$, one has $(\Delta P_{10} ~\Delta Q_{10})^T = -u_2\Delta \ell_{21}$. It follows from \eqref{construct ell10}--\eqref{construct S0} that $S_{0,-1}'=S_{10}'-z_{10}|S_{10}'|^2/v_1$. It is assumed that $\ell_{10}=|S_{10}|^2/v_1$, therefore $S_{0,-1}=S_{10}-z_{10}|S_{10}|^2/v_1$. Hence, $\Delta S_{0,-1}= \Delta S_{10}-z_{10}(|S_{10}'|^2-|S_{10}|^2)/v_1$, which can be written as
	\begin{equation}\label{delta S0}
	\begin{pmatrix} \Delta P_{0,-1} \\ \Delta Q_{0,-1} \end{pmatrix}
	 = B_1
	 \begin{pmatrix} \Delta P_{10} \\ \Delta Q_{10} \end{pmatrix}
	 = - B_1 u_{2}\Delta \ell_{21}
	\end{equation}
where
	$$B_1 = I - \frac{2}{v_1}\begin{pmatrix} r_{10} \\ x_{10} \end{pmatrix}
	\left(\frac{P_{10}+P_{10}'}{2} ~~\frac{Q_{10}+Q_{10}'}{2}\right).$$
	 
When C1 holds, one has $\underline{A}_1u_2>0$, and therefore
	\begin{eqnarray}
	B_1u_2 &=& \underline{A}_1u_2 + (B_1-\underline{A}_1)u_2 \nonumber\\
	&>& (B_1-\underline{A}_1)u_2 \nonumber\\
	&=& \begin{pmatrix} r_{10} \\ x_{10} \end{pmatrix}
	\left(
	\frac{2\hat{P}_{10}^+(\overline{p})}{\underline{v}_1}        -         \frac{P_{10}+P_{10}'}{v_1},        \quad
	\frac{2\hat{Q}_{10}^+(\overline{q})}{\underline{v}_1}        -         \frac{Q_{10}+Q_{10}'}{v_1}
	\right).\label{expansion}
	\end{eqnarray}
According to Lemma \ref{lemma: v}, one has $P_{10}\leq\hat{P}_{10}({p})\leq\hat{P}_{10}(\overline{p})\leq\hat{P}_{10}^+(\overline{p})$. Similarly, $P_{10}'\leq\hat{P}_{10}^+(\overline{p})$. Therefore
	$$\frac{2\hat{P}_{10}^+(\overline{p})}{\underline{v}_1}   \geq    
	\frac{2\hat{P}_{10}^+(\overline{p})}{v_1}     \geq
	\frac{P_{10}+P_{10}'}{v_1}.$$
Similarly, one has
	$$\frac{2\hat{Q}_{10}^+(\overline{q})}{\underline{v}_1}   \geq
	\frac{Q_{10}+Q_{10}'}{v_1}.$$
Then it follows from \eqref{expansion} that
	$B_1u_2>(B_1-\underline{A}_1)u_2\geq0.$
Then it follows from \eqref{delta S0} that $\Delta S_{0,-1}>0$. This completes the proof that C1 implies $\Delta S_{k,k-1}>0$ for $k=0,1$.
	
Next we show that $\Delta S_{k,k-1}>0$ for $k=0,1$ implies $v'\geq v$. When $\Delta S_{10}>0$, one has
	$$\re(\bar{z}_{10} \Delta S_{10}) = r_{10}\Delta P_{10} + x_{10}\Delta Q_{10}>0.$$
Similarly, when $\Delta S_{0,-1}>0$, one has $\re(\bar{z}_{10} \Delta S_{0,-1}) >0$. Then it follows from \eqref{construct v1} that
	\begin{eqnarray*}
	\Delta v_1 &=& 2\re(\bar{z}_{10} \Delta S_{10}) - |z_{10}|^2 \Delta \ell_{10} \\
	&>& \re(\bar{z}_{10} \Delta S_{10}) - |z_{10}|^2 \Delta \ell_{10} \\
	&=& \re(\bar{z}_{10} (\Delta S_{10}-z_{10}\Delta \ell_{10})) \\
	&=& \re(\bar{z}_{10} \Delta S_{0,-1}) > 0.
	\end{eqnarray*}
Similarly, one has
	$$\Delta v_2 = \Delta v_1 + \re(\bar{z}_{21}\Delta S_{21}) + \re(\bar{z}_{21}\Delta S_{10}) > \Delta v_1 > 0.$$
Hence, $v'>v$. This completes the proof of Claim \ref{claim C1}. $\hfill\Box$

\noindent{\it Proof of Claim \ref{claim C2}: } When C2 holds, it follows from Lemma \ref{lemma: v} that $v'\leq\hat{v}(s')=\hat{v}(s)\leq\overline{v}$.
$\hfill\Box$

\begin{remark}\label{rem: generalization}
Theorem \ref{thm: condition} still holds if there is an additional power injection constraint $s\in\mathcal{S}$ in OPF, where $\mathcal{S}$ can be an arbitrary set. This is because we set $s'=s$ in the construction of $w'$ (the initialization step), and therefore $s\in\mathcal{S}$ implies $s'\in\mathcal{S}$. Hence, the introduction of additional constraint $s\in\mathcal{S}$ does not affect the proof that $w'$ is feasible for SOCP and has a smaller objective value than $w$. As a result, Theorem \ref{thm: condition} still holds.
\end{remark}

\section{A modified OPF problem}\label{sec: modification}
C2 in Theorem \ref{thm: condition} depends on SOCP solutions and cannot be checked a priori. This drawback motivates us to impose additional constraint
	\begin{equation}\label{add}
	s \in \mathcal{S}_{\mathrm{volt}}
	\end{equation}
on OPF such that C2 holds automatically. Constraint \eqref{add} is equivalent to $\hat{v}_i(s)\leq\overline{v}_i$ for $i\in\hN^+$---$n$ affine constraints on $s$. Since $v_i\leq\hat{v}_i(s)$ (Lemma \ref{lemma: v}), the voltage upper bound constraints $v_i\leq\overline{v}_i$ in \eqref{OPF constraint v} do not bind after imposing \eqref{add}. To summarize, the modified OPF problem is
    	\begin{align}
    	\textbf{OPF-m:}~\min~~ & \sum_{i\in\hN}f_i(\re(s_i)) \nonumber\\
	\mathrm{over}~~ & s,S,v,\ell,s_0 \nonumber\\
	\mathrm{s.t.}~~ & \eqref{OPF S}-\eqref{OPF constraint s}; \nonumber\\
	& \underline{v}_i \leq v_i, ~\hat{v}_i(s)\leq\overline{v}_i, \quad i\in\hN^+.\label{modify}
    	\end{align}
A modification is necessary to ensure that SOCP is exact, since it is in general not exact otherwise. Remarkably, the feasible sets of OPF-m and OPF are similar since $\hat{v}_i(s)$ is close to $v_i$ in practice \cite{Baran89_capacitor_sizing,Baran89_network_reconfiguration,Turitsyn10}.

One can still relax \eqref{OPF ell} to \eqref{relax} to obtain a relaxation of OPF-m:
    \begin{align*}
    \textbf{SOCP-m:}~\min~~ & \sum_{i\in\hN}f_i(\re(s_i)) \nonumber\\
	\mathrm{over}~~ & s,S,v,\ell,s_0\nonumber\\
	\mathrm{s.t.}~~ & \eqref{OPF S}-\eqref{OPF v}, ~\eqref{relax}, ~\eqref{OPF constraint s}, ~\eqref{modify}.
    \end{align*}
Note again that SOCP-m is not necessarily convex, since we allow $f_i$ and $\mathcal{S}_i$ to be nonconvex.

Since OPF-m is obtained by imposing additional constraint \eqref{add} on OPF, it follows from Remark \ref{rem: generalization} that:
	\begin{theorem}\label{thm: exact}
	Assume that $f_0$ is strictly increasing, and that there exists $\overline{p}_i$ and $\overline{q}_i$ such that $\mathcal{S}_i\subseteq\{s\in\mathbb{C}~|~\re(s)\leq\overline{p}_i,~\im(s)\leq\overline{q}_i\}$ for $i\in\hN^+$. Then SOCP-m is exact if C1 holds.
	\end{theorem}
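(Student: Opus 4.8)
The plan is to derive Theorem~\ref{thm: exact} as an immediate corollary of Theorem~\ref{thm: condition} and Remark~\ref{rem: generalization}, by observing that OPF-m is simply OPF with one extra constraint on the injection vector $s$, and that this extra constraint is exactly what forces condition C2 to hold for every solution of the relaxation.

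First I would make the reduction explicit. The feasible set of OPF-m is obtained from that of OPF by appending the constraints $\hat{v}_i(s)\le\overline{v}_i$ for $i\in\hN^+$; since $\hat{v}(s)$ is affine in $s$, this is precisely the single injection constraint $s\in\mathcal{S}_{\mathrm{volt}}$, where $\mathcal{S}_{\mathrm{volt}}=\hS_{\mathrm{volt}}=\{s\in\mathbb{C}^n : \hat{v}_i(s)\le\overline{v}_i,\ i\in\hN^+\}$. Likewise SOCP-m is exactly the SOCP relaxation of this augmented OPF: both SOCP and SOCP-m are obtained by relaxing \eqref{OPF ell} to \eqref{relax}, and the extra constraint \eqref{add} involves only $s$, not $\ell$. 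So SOCP-m coincides with ``SOCP with the additional injection constraint $s\in\mathcal{S}$'' in the sense of Remark~\ref{rem: generalization}, taking $\mathcal{S}=\mathcal{S}_{\mathrm{volt}}$ (which the remark allows to be an arbitrary set).

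Next I would invoke Remark~\ref{rem: generalization}: with the extra constraint $s\in\mathcal{S}_{\mathrm{volt}}$ in force, Theorem~\ref{thm: condition} still applies, so SOCP-m is exact provided C1 and C2 both hold for SOCP-m. It remains only to dispose of C2. But C2 requires that every SOCP-m solution $w=(s,S,v,\ell,s_0)$ satisfy $s\in\hS_{\mathrm{volt}}$, and this is now a hard constraint of SOCP-m itself: every feasible point of SOCP-m, in particular every solution, satisfies $\hat{v}_i(s)\le\overline{v}_i$ for all $i\in\hN^+$, i.e. $s\in\hS_{\mathrm{volt}}=\mathcal{S}_{\mathrm{volt}}$. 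Hence C2 holds automatically, and exactness of SOCP-m follows from C1 alone. The hypotheses that $f_0$ is strictly increasing and that the $\overline{p}_i,\overline{q}_i$ upper bounds exist are inherited verbatim from OPF, so nothing further needs checking.

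There is essentially no analytic obstacle; the only point requiring care is the bookkeeping of the reduction, specifically confirming that the construction of the improved point $w'$ in the proof of Theorem~\ref{thm: condition} does not disturb the new constraint. This is exactly the content of Remark~\ref{rem: generalization}: that construction sets $s'=s$ in its initialization step, so $s\in\mathcal{S}_{\mathrm{volt}}$ immediately yields $s'\in\mathcal{S}_{\mathrm{volt}}$, and the remaining feasibility and cost-decrease argument is unaffected. Thus the proof is a short corollary rather than a fresh argument.
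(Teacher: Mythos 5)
Your proposal is correct and follows exactly the paper's own route: the paper likewise obtains Theorem~\ref{thm: exact} as an immediate consequence of Theorem~\ref{thm: condition} and Remark~\ref{rem: generalization}, noting that the added constraint $s\in\mathcal{S}_{\mathrm{volt}}$ makes C2 hold automatically while the construction of $w'$ (which sets $s'=s$) is unaffected. Your write-up simply makes explicit the bookkeeping that the paper leaves implicit.
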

Theorem \ref{thm: exact} implies that after restricting the power injection $s$ to the region $\mathcal{S}_{\mathrm{volt}}$ where voltage upper bounds do not bind, the corresponding SOCP-m relaxation is exact under C1---a mild condition that can be checked a priori.

\begin{theorem}\label{thm: unique}
	If $f_i$ is convex for $i\in \hN$, $\mathcal{S}_i$ is convex for $i\in\hN^+$, and SOCP-m is exact, then SOCP-m has at most one solution.
	\end{theorem}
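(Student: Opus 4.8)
Proof proposal. The plan is to exploit that, under the stated hypotheses, SOCP-m is a genuinely convex program: constraints \eqref{OPF S}--\eqref{OPF v} are affine in $(s,S,v,\ell,s_0)$; the relaxed constraint \eqref{relax} is a rotated second-order cone constraint since $v_i\geq\underline v_i>0$ on the feasible set; the constraints $s_i\in\mathcal S_i$ and \eqref{modify} are convex; and the objective $\sum_{i\in\hN}f_i(\re(s_i))$ is convex. Hence the feasible set is convex. So suppose, toward a contradiction, that $w^{(1)}=(s^{(1)},S^{(1)},v^{(1)},\ell^{(1)},s_0^{(1)})$ and $w^{(2)}$ are two distinct SOCP-m solutions. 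Then $w^{(\lambda)}:=\lambda w^{(1)}+(1-\lambda)w^{(2)}$ is feasible for every $\lambda\in[0,1]$, and $C(w^{(\lambda)})\leq\lambda C(w^{(1)})+(1-\lambda)C(w^{(2)})$ equals the optimal value, so $w^{(\lambda)}$ is again a minimizer; by exactness (Definition \ref{def: exact}) every $w^{(\lambda)}$ satisfies \eqref{OPF ell}.

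The heart of the argument is to show that exactness along the whole segment forces $w^{(1)}=w^{(2)}$. Fix a line $(i,j)\in\hE$. Each of $\ell_{ij}^{(\lambda)}$, $S_{ij}^{(\lambda)}$, $v_i^{(\lambda)}$ is affine in $\lambda$, and by \eqref{OPF ell} the rational function $\lambda\mapsto|S_{ij}^{(\lambda)}|^2/v_i^{(\lambda)}$, whose denominator is strictly positive, coincides on $[0,1]$ with the affine function $\lambda\mapsto\ell_{ij}^{(\lambda)}$; hence $\ell_{ij}^{(\lambda)}\,v_i^{(\lambda)}=|S_{ij}^{(\lambda)}|^2$ as polynomials in $\lambda$. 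Clearing denominators and matching the coefficients of $1,\lambda,\lambda^2$ yields, after a short computation (it reduces to the identity $\bigl|(v_i^{(2)}-v_i^{(1)})S_{ij}^{(1)}-v_i^{(1)}(S_{ij}^{(2)}-S_{ij}^{(1)})\bigr|^2=0$), a scalar $\alpha_i:=v_i^{(2)}/v_i^{(1)}>0$ with $S_{ij}^{(2)}=\alpha_iS_{ij}^{(1)}$, $\ell_{ij}^{(2)}=\alpha_i\ell_{ij}^{(1)}$, and $v_i^{(2)}=\alpha_iv_i^{(1)}$; note that $\alpha_i$ depends only on the bus $i$ since $i$ has a unique parent. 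The degenerate subcase $v_i^{(1)}=v_i^{(2)}$ is handled separately and directly forces $S_{ij}^{(1)}=S_{ij}^{(2)}$, i.e. $\alpha_i=1$.

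I would then propagate these constants toward the root using \eqref{OPF v}. Writing \eqref{OPF v} on line $i\to j$ for $w^{(1)}$, multiplying by $\alpha_i$, and comparing with \eqref{OPF v} for $w^{(2)}$ (using the rescalings just obtained) gives $v_j^{(2)}=\alpha_iv_j^{(1)}$. If $j=0$, then since $v_0$ is fixed (Assumption A2) and positive, $\alpha_i=1$. If $j\neq0$, then also $v_j^{(2)}=\alpha_jv_j^{(1)}$, and $v_j^{(1)}\geq\underline v_j>0$ (Assumption A4) forces $\alpha_j=\alpha_i$. Inducting down the tree from the root (Assumption A1) gives $\alpha_i=1$ for all $i\in\hN^+$, so $v^{(1)}=v^{(2)}$, $S^{(1)}=S^{(2)}$, $\ell^{(1)}=\ell^{(2)}$. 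Finally, \eqref{OPF S} writes each $s_i$ and \eqref{OPF s} writes $s_0$ as the same function of $S$ and $\ell$, so $s^{(1)}=s^{(2)}$ and $s_0^{(1)}=s_0^{(2)}$; hence $w^{(1)}=w^{(2)}$, contradicting distinctness, and SOCP-m has at most one solution.

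The only genuinely delicate step is the passage from ``affine equals quadratic-over-linear on an interval'' to the rescaling identity $S_{ij}^{(2)}=\alpha_iS_{ij}^{(1)}$, together with the bookkeeping of the degenerate case in which the voltage does not vary along the segment; everything after that is routine propagation on the tree, anchored by the fixed substation voltage. It is worth remarking that this proof uses neither C1, nor the injection bounds $\overline p,\overline q$, nor monotonicity of $f_0$: only convexity of the $f_i$ and $\mathcal S_i$, exactness, the tree topology, the fixed $v_0$, and $\underline v_i>0$.
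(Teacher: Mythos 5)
Your proof is correct and takes essentially the same route as the paper's: the paper also forms a convex combination of two solutions (the midpoint rather than the whole segment), invokes exactness on it to extract the cross-term identity, concludes $S_{ij}^{(1)}/v_i^{(1)}=S_{ij}^{(2)}/v_i^{(2)}$ on every line, and then propagates the resulting voltage ratios through the connected tree anchored at the fixed $v_0$. The only cosmetic difference is that the paper squeezes the cross term between an AM--GM lower bound and a Cauchy--Schwarz upper bound, whereas you match polynomial coefficients in $\lambda$; both yield the same rescaling identity.
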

	Theorem \ref{thm: unique} implies that SOCP-m has at most one solution if it is convex and exact. It is proved in Appendix \ref{app: thm unique}.

\section{Connection with prior results}\label{sec: prior works}

We compare the sufficient condition in Theorem \ref{thm: condition} for the exactness of the SOCP relaxation
for radial networks with those  in the literature. 
As mentioned earlier there are mainly three categories of existing sufficient conditions:
\begin{enumerate}
\item The power injection constraints satisfy certain patterns  
\cite{Masoud11, Farivar-2013-BFM-TPS, Bose2011, Bose-2012-QCQPt, Zhang2013, Sojoudi2012PES, Sojoudi2013},
e.g., there are no lower bounds on the power injections (load over-satisfaction).

\item The phase angle difference across each line is bounded in terms of its $r/x$ ratio
\cite{Zhang2013, LavaeiTseZhang2012, lam2012distributed}.  

\item The voltage upper bounds are relaxed plus some other conditions \cite{Gan12,Gan13}.
\end{enumerate}

It is interesting to contrast the result in \cite{Farivar-2013-BFM-TPS} and  
Theorem \ref{thm: condition}.   The sufficient condition in \cite{Farivar-2013-BFM-TPS} relaxes
the lower bound on power injections but allows arbitrary constraints on the voltage magnitudes
whereas the condition in Theorem \ref{thm: condition} relaxes the upper bound on voltage magnitudes
but allows arbitrary constraints on power injections as long as they are upper bounded.  
As shown in Section \ref{sec: modification} voltage upper bounds can be imposed provided
we constrain the power injections.
The condition in \cite{Farivar-2013-BFM-TPS} requires the objective function be strictly
increasing in each $\ell_{ij}$ and nondecreasing in each $s_i$
whereas that in Theorem \ref{thm: condition} requires it be 
strictly increasing in $s_0$.

We now show that Theorem \ref{thm: condition} unifies and generalizes the results 
\cite{Gan12,Gan13} due to the following theorem proved in Appendix \ref{app: weaker}.
It says that C1 holds if at least one of the following holds: there is no distributed generation 
or shunt capacitors; lines use the same type of cable; there is no distributed generation 
and lines get thinner as they branch out; there are no shunt capacitors and lines get 
thicker as they branch out.
	\begin{theorem}\label{lemma: weaker}
	Assume that there exists $\overline{p}_i$ and $\overline{q}_i$ such that $\mathcal{S}_i\subseteq\{s\in\mathbb{C}~|~\re(s)\leq\overline{p}_i,~\im(s)\leq\overline{q}_i\}$ for $i\in\hN^+$. Then C1 holds if any one of the following statements is true:
	\begin{itemize}
	\item[(i)] $\hat{P}_{ij}(\overline{p})\leq0$, $\hat{Q}_{ij}(\overline{q})\leq0$ for any $(i,j)\in\hE$ such that $i\notin \hL$.
	
	\item[(ii)] $r_{ij}/x_{ij}=r_{jk}/x_{jk}$ for any $(i,j),(j,k)\in\hE$; and $\underline{v}_i-2r_{ij}\hat{P}_{ij}^+(\overline{p})-2x_{ij}\hat{Q}_{ij}^+(\overline{q})>0$ for any $(i,j)\in\hE$ such that $i\notin \hL$.
	
	\item[(iii)] $r_{ij}/x_{ij}\geq r_{jk}/x_{jk}$ for any $(i,j),(j,k)\in\hE$; and $\hat{P}_{ij}(\overline{p})\leq0$, $\underline{v}_i-2x_{ij}\hat{Q}_{ij}^+(\overline{q})>0$ for any $(i,j)\in\hE$ such that $i\notin \hL$.
	
	\item[(iv)] $r_{ij}/x_{ij}\leq r_{jk}/x_{jk}$ for any $(i,j),(j,k)\in\hE$; and $\hat{Q}_{ij}(\overline{q})\leq0$, $\underline{v}_i-2r_{ij}\hat{P}_{ij}^+(\overline{p})>0$ for any $(i,j)\in\hE$ such that $i\notin \hL$.
	
	\item[(v)] $\begin{pmatrix}
	\displaystyle \prod_{(k,l)\in \hP_j}\left( 1-\frac{2r_{kl}\hat{P}_{kl}^+(\overline{p})}{\underline{v}_k} \right) &
	\displaystyle -\sum_{(k,l)\in \hP_j}\frac{2r_{kl}\hat{Q}_{kl}^+(\overline{q})}{\underline{v}_k} \\
	\displaystyle -\sum_{(k,l)\in \hP_j}\frac{2x_{kl}\hat{P}_{kl}^+(\overline{p})}{\underline{v}_k} &
	\displaystyle \prod_{(k,l)\in \hP_j}\left( 1-\frac{2x_{kl}\hat{Q}_{kl}^+(\overline{q})}{\underline{v}_k} \right)
	\end{pmatrix}
	\begin{pmatrix}
	r_{ij} \\ x_{ij}
	\end{pmatrix} >0$ for all $(i,j)\in\hE$.
	\end{itemize}
	\end{theorem}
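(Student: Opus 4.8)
The plan is to verify C1 directly under each of the five hypotheses: fix $l\in\hL$ and $1\le s\le t\le n_l$ and show $\underline{A}_{l_s}\cdots\underline{A}_{l_{t-1}}u_{l_t}>0$. The case $s=t$ is trivial (the product is $I$ and $u_{l_t}>0$ by A3), so assume $s<t$. The structural facts I would use throughout: each index $l_m$ with $m\le n_l-1$ is non-leaf (it has the child $l_{m+1}$ on $\hP_l$), so the hypotheses, being imposed at non-leaf buses, apply to all of $\underline{A}_{l_s},\dots,\underline{A}_{l_{t-1}}$; each $\underline{A}_i$ is the rank-one update $I-\tfrac{2}{\underline{v}_i}u_i w_i^{\top}$ with $w_i:=(\hat{P}^+_{ij}(\overline{p}),\hat{Q}^+_{ij}(\overline{q}))^{\top}\ge 0$; and $u_{l_t}>0$. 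Case (i) then follows immediately: $\hat{P}_{ij}(\overline{p})\le0$ and $\hat{Q}_{ij}(\overline{q})\le0$ at non-leaf $i$ force $w_{l_m}=0$, hence $\underline{A}_{l_m}=I$ and the product is $u_{l_t}>0$ — this is the argument behind Proposition \ref{pro: no reverse}, localized to non-leaf buses. Case (ii) follows because the common value $\rho:=r_{ij}/x_{ij}$ along $\hP_l$ makes $(\rho,1)^{\top}$ a common eigenvector of $\underline{A}_{l_s},\dots,\underline{A}_{l_{t-1}}$ with eigenvalue $(\underline{v}_{l_m}-2r_{l_m,l_{m-1}}\hat{P}^+_{l_m,l_{m-1}}(\overline{p})-2x_{l_m,l_{m-1}}\hat{Q}^+_{l_m,l_{m-1}}(\overline{q}))/\underline{v}_{l_m}$, which the second part of (ii) makes strictly positive; since $u_{l_t}$ is a positive multiple of $(\rho,1)^{\top}>0$, so is the product.

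Cases (iii) and (iv) are mirror images. In (iii), $\hat{P}_{ij}(\overline{p})\le0$ at non-leaf $i$ makes each $\underline{A}_{l_m}$ upper triangular, and $\underline{v}_i-2x_{ij}\hat{Q}^+_{ij}(\overline{q})>0$ makes its $(2,2)$ entry lie in $(0,1)$; symmetrically in (iv) the matrices are lower triangular with $(1,1)$ entry in $(0,1)$. I would introduce the running vector $v^{(t)}:=u_{l_t}$, $v^{(m)}:=\underline{A}_{l_m}v^{(m+1)}$, and prove by downward induction on $m$ that $v^{(m)}>0$ and that the ratio of its first to its second component (in (iv), the reciprocal ratio) is at least the $r/x$ ratio $\rho_m$ of line $(l_m,l_{m-1})$. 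The step uses the monotonicity of $r_{ij}/x_{ij}$ along $\hP_l$ supplied by the first part of the hypothesis — so the requirement $\ge\rho_m$ weakens as the sweep moves toward the root — together with the elementary identity $\frac{\tau-\rho d}{1-d}-\rho=\frac{\tau-\rho}{1-d}\ge0$ valid for $\tau\ge\rho$, $d\in(0,1)$; the base case $v^{(t)}=u_{l_t}$ has ratio exactly $\rho_t$. Taking $m=s$ gives $v^{(s)}>0$, i.e.\ C1.

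Case (v) is where the real work lies and is the main obstacle. The tempting route — show $\underline{A}_{l_s}\cdots\underline{A}_{l_{t-1}}\ge M_{l_{t-1}}$ entrywise (writing $M_j$ for the matrix displayed in (v)) and then right-multiply by $u_{l_t}\ge0$ — fails, because entrywise matrix inequalities are not preserved under multiplication of sign-indefinite matrices (partial products can acquire diagonal entries exceeding $1$). I would instead argue at the vector level. Writing $\underline{A}_{l_m}=\bigl(\begin{smallmatrix}1-a_m & -b_m\\ -c_m & 1-d_m\end{smallmatrix}\bigr)$ with $a_m=\tfrac{2r_{l_m,l_{m-1}}\hat{P}^+_{l_m,l_{m-1}}(\overline{p})}{\underline{v}_{l_m}}$ and $b_m,c_m,d_m$ the analogous nonnegative quantities, the first step is to apply the hypothesis $M_ju_{ij}>0$ to the line $(l_{m+1},l_m)\in\hE$: its two components yield $\prod_{j=1}^m(1-a_j)>0$ and $\prod_{j=1}^m(1-d_j)>0$ for every $m\le n_l-1$, and taking ratios of consecutive products gives $1-a_m\in(0,1]$, $1-d_m\in(0,1]$ for all such $m$. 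The second step is a downward induction along $\hP_l$ showing that the components $\xi_m,\eta_m$ of $v^{(m)}:=\underline{A}_{l_m}v^{(m+1)}$ (with $v^{(t)}=u_{l_t}$) satisfy $\xi_m\ge\bigl(\prod_{j=m}^{t-1}(1-a_j)\bigr)r_{l_t,l_{t-1}}-\bigl(\sum_{j=m}^{t-1}b_j\bigr)x_{l_t,l_{t-1}}$ and the symmetric bound for $\eta_m$; this uses that $\xi_\bullet,\eta_\bullet$ stay nonnegative and nonincreasing along the sweep (because $1-a_m,1-d_m\in(0,1]$ and $b_m,c_m\ge0$), that deleting $(0,1]$-factors from a product only increases it, and that shrinking a set of nonnegative summands only decreases the sum. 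Finally, since $\{m,\dots,t-1\}\subseteq\{1,\dots,t-1\}$, these lower bounds dominate the corresponding components of $M_{l_{t-1}}u_{l_t,l_{t-1}}$, which are strictly positive by hypothesis (v) applied to the line $(l_t,l_{t-1})$; hence $v^{(s)}>0$, which is C1. I expect the bookkeeping of this last case — keeping the induction hypothesis strong enough that strict positivity and the monotonicity of the intermediate vectors are available at once, and identifying precisely which instances of (v) are invoked (on the child lines for $1-a_m,1-d_m>0$, and on $(l_t,l_{t-1})$ for the final strict inequality) — to be the only genuine difficulty.
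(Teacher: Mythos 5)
Your proposal is correct, and for parts (i)--(iv) it essentially reproduces the paper's argument (Claims \ref{claim: no reverse}--\ref{claim: Q} in Appendix \ref{app: weaker}): (i) via $\underline{A}_{l_m}=I$ at non-leaf buses, (ii) via the observation that $(r_{ij},x_{ij})^{\top}$ is propagated to a positive scalar multiple of itself with factor $(\underline{v}_i-2r_{ij}\hat{P}^+_{ij}(\overline{p})-2x_{ij}\hat{Q}^+_{ij}(\overline{q}))/\underline{v}_i$, and (iii)/(iv) via a downward induction on the component ratio of the partial product vector. The only difference in (iii)/(iv) is cosmetic: the paper carries the fixed invariant $\alpha_s/\beta_s\ge r_{t,t-1}/x_{t,t-1}$, whereas you let the threshold relax to $\rho_m$ as the sweep approaches the root; both are sufficient because the $r/x$ ratios are monotone along the path. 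Where you genuinely depart from the paper is part (v). The paper isolates Lemma \ref{lemma: matrix multiplication}: if the ``aggregated'' matrix $\bigl(\begin{smallmatrix}\prod c_j & -\sum d_j\\ -\sum e_j & \prod f_j\end{smallmatrix}\bigr)$ maps $u>0$ to a positive vector, then so do all partial products $\bigl(\begin{smallmatrix}c_j & -d_j\\ -e_j & f_j\end{smallmatrix}\bigr)\cdots\bigl(\begin{smallmatrix}c_i & -d_i\\ -e_i & f_i\end{smallmatrix}\bigr)$; its proof is an induction on the number of factors that merges the first two matrices into one and uses an entrywise comparison against a vector already known to be positive. You instead run a single downward induction along $\hP_l$ that simultaneously carries an explicit componentwise lower bound $\xi_m\ge\bigl(\prod_{j=m}^{t-1}(1-a_j)\bigr)r_{l_t,l_{t-1}}-\bigl(\sum_{j=m}^{t-1}b_j\bigr)x_{l_t,l_{t-1}}$, strict positivity, and monotonicity of the intermediate vectors; positivity then follows because dropping $(0,1]$-factors from the product and summands from the sum only improves the bound, which is therefore dominated below by the corresponding component of hypothesis (v) applied to $(l_t,l_{t-1})$. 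Your preliminary step extracting $1-a_m,1-d_m\in(0,1]$ from ratios of consecutive instances of (v) is exactly the paper's first step in Claim \ref{claim: pre C1}. The trade-off: the paper's route yields a reusable lemma (also invoked in Lemma \ref{lemma: dynamical system}-style arguments and in Proposition \ref{prop: smaller injections}), while yours is more elementary and self-contained, at the cost of a three-part induction hypothesis whose components must be established in the right order within each step --- which you correctly identify as the only delicate point, and which does close.
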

The results in \cite{Gan12,Gan13} say that, if there are no voltage upper bounds, i.e., $\overline{v}=\infty$, then SOCP is exact if any one of (i)--(v) holds. Note that C2 holds automatically when $\overline{v}=\infty$, and that C1 holds if any one of (i)--(v) holds according to Theorem \ref{lemma: weaker}. 
The following corollary follows immediately from Theorem \ref{thm: exact} and Theorem \ref{lemma: weaker}.
	\begin{corollary}
	Assume that $f_0$ is strictly increasing, and that there exists $\overline{p}_i$ and $\overline{q}_i$ such that $\mathcal{S}_i\subseteq\{s\in\mathbb{C}~|~\re(s)\leq\overline{p}_i,~\im(s)\leq\overline{q}_i\}$ for $i\in\hN^+$. Then SOCP-m is exact if any one of (i)--(v) holds.
	\end{corollary}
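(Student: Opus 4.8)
The plan is to prove separately that each of the five statements (i)--(v) implies C1. Since C1 is a conjunction of inequalities of the form $\underline{A}_{l_s}\underline{A}_{l_{s+1}}\cdots\underline{A}_{l_{t-1}}u_{l_t}>0$ indexed by a leaf $l\in\hL$ and a pair $1\le s\le t\le n_l$, it suffices to bound one such matrix--vector product at a time. Two structural facts drive everything. First, each $\underline{A}_i$ equals $I-u_i\gamma_i^{T}$ with $u_i=(r_{ij},x_{ij})^{T}>0$ (by A3) and $\gamma_i=\tfrac{2}{\underline{v}_i}(\hat{P}_{ij}^+(\overline{p}),\hat{Q}_{ij}^+(\overline{q}))^{T}\ge 0$, i.e.\ the identity minus a nonnegative rank-one matrix. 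Second, every bus $l_m$ with $1\le m\le t-1$ has the child $l_{m+1}$, hence $l_m\notin\hL$, so the only quantities entering a product are $\underline{A}_{l_m}$ for \emph{non-leaf} $l_m$ together with the factor $u_{l_t}>0$; this is why each of (i)--(iv) only constrains lines emanating from non-leaf buses.

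Cases (i) and (ii) are short. Under (i), $\hat{P}_{ij}(\overline{p})\le 0$ and $\hat{Q}_{ij}(\overline{q})\le 0$ at every non-leaf $i$, so $\hat{P}_{ij}^+(\overline{p})=\hat{Q}_{ij}^+(\overline{q})=0$ there, hence $\underline{A}_{l_m}=I$ for all $m\le t-1$ and the product collapses to $u_{l_t}>0$, exactly as in Proposition~\ref{pro: no reverse}. Under (ii) the common ratio $r_{ij}/x_{ij}=:\rho/\xi$ (with $\rho,\xi>0$) makes every $u_{ij}$ a positive multiple of the fixed vector $(\rho,\xi)^{T}$, and a one-line computation gives $\underline{A}_{ij}(\rho,\xi)^{T}=\tfrac{\underline{v}_i-2r_{ij}\hat{P}_{ij}^+(\overline{p})-2x_{ij}\hat{Q}_{ij}^+(\overline{q})}{\underline{v}_i}(\rho,\xi)^{T}$, again a positive multiple of $(\rho,\xi)^{T}$ by the inequality in (ii). Thus $\underline{A}_{l_s}\cdots\underline{A}_{l_{t-1}}u_{l_t}$ is a product of positive scalars times $(\rho,\xi)^{T}$, hence strictly positive.

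For (iii) and (iv), $\hat{P}_{ij}^+(\overline{p})=0$ (resp.\ $\hat{Q}_{ij}^+(\overline{q})=0$) at non-leaf $i$ forces each relevant $\underline{A}_{l_m}$ to be upper- (resp.\ lower-) triangular with diagonal entries $1$ and $1-2x_{ij}\hat{Q}_{ij}^+(\overline{q})/\underline{v}_i>0$ (resp.\ $1-2r_{ij}\hat{P}_{ij}^+(\overline{p})/\underline{v}_i>0$ and $1$). I would then argue by induction on $m=t-1,t-2,\dots,s$ about the partial products $w^{(m)}:=\underline{A}_{l_m}\cdots\underline{A}_{l_{t-1}}u_{l_t}$ (with $w^{(t)}:=u_{l_t}$), maintaining the invariant that $w^{(m)}>0$ and that the ratio of its first to its second component is at least the $r/x$ ratio of the next line to be processed. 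The triangular form keeps the new second component positive; the inequality in (iii) together with the ratio bound keeps the new first component positive; and a short algebraic check shows the map $\varrho\mapsto(\varrho-r\beta)/(1-x\beta)$ (with $\beta\ge0$, $1-x\beta>0$) sends $[r/x,\infty)$ into itself, so the invariant propagates exactly because the $r/x$ ratios are monotone along the path, which is the hypothesis in (iii) (and in (iv), with the two coordinates swapped).

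The real work is (v), and I expect it to be the main obstacle. Here I would proceed in three steps. (a) First show that (v) forces every per-line factor to be positive: applying (v) to the child line $(l_{m+1},l_m)\in\hE$ and reading off the two components of $M_{l_m}(r_{l_{m+1}l_m},x_{l_{m+1}l_m})^{T}>0$ yields $1-2r_{kl}\hat{P}_{kl}^+(\overline{p})/\underline{v}_k>0$ and $1-2x_{kl}\hat{Q}_{kl}^+(\overline{q})/\underline{v}_k>0$ for every line $(k,l)$ on the path (a short induction up the path, since each factor then lies in $(0,1]$). (b) Prove the entrywise matrix bound $\underline{A}_{l_s}\cdots\underline{A}_{l_{t-1}}\ge N_s$, where $N_s$ is the $2\times2$ matrix obtained from the matrix displayed in (v) by replacing $\hP_j$ with the set of lines $\{(l_m,l_{m-1}):s\le m\le t-1\}$, by induction on the number of factors: the induction keeps the two diagonal entries positive and the two off-diagonal entries nonpositive and controls their magnitudes, using that each new factor $\underline{A}_{l_s}$ has diagonal entries in $(0,1]$ and nonnegative $u$- and $\gamma$-data. (c) Finally observe that $N_s$ dominates $M_{l_{t-1}}$ entrywise (it uses a sub-path, and all per-line factors are in $(0,1]$), so $\underline{A}_{l_s}\cdots\underline{A}_{l_{t-1}}u_{l_t}\ge N_s u_{l_t}\ge M_{l_{t-1}}u_{l_t}>0$ by (v) applied to the line $(l_t,l_{t-1})\in\hE$ and by $u_{l_t}\ge0$. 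The delicate point is step (b): the bookkeeping of the cross terms produced by the matrix multiplications --- in particular keeping the off-diagonal lower bound intact when an intermediate diagonal entry overshoots $1$ --- and making the comparison in step (c) line up with the exact form of the matrix appearing in the statement of (v). This is where the care is needed.
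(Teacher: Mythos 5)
Your overall route is the paper's: reduce the corollary to showing that each of (i)--(v) implies C1, and then (a step you should state explicitly) invoke Theorem \ref{thm: exact} to get exactness of SOCP-m. Your treatments of (i)--(iv) are correct and essentially identical to the paper's Claims \ref{claim: no reverse}--\ref{claim: Q}: under (i) every relevant $\underline{A}_{l_m}$ is the identity; under (ii) the common direction $(\rho,\xi)^T$ is an eigenvector of every $\underline{A}_{ij}$ with eigenvalue $\bigl(\underline{v}_i-2r_{ij}\hat{P}_{ij}^+(\overline{p})-2x_{ij}\hat{Q}_{ij}^+(\overline{q})\bigr)/\underline{v}_i>0$; and your triangular-matrix induction for (iii)--(iv), with the invariant that the component ratio stays at least the $r/x$ ratio of the next line (propagated by the map $\varrho\mapsto(\varrho-r\beta)/(1-x\beta)$ and the monotonicity of $r/x$ along the path), is exactly the paper's argument.

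The gap is in case (v), step (b). The entrywise bound $\underline{A}_{l_s}\cdots\underline{A}_{l_{t-1}}\ge N_s$ is \emph{false} in general for three or more factors of the form $\left(\begin{smallmatrix}c&-d\\-e&f\end{smallmatrix}\right)$ with $c,f\in(0,1]$ and $d,e\ge0$, which is the only structure your induction invokes. Take
\begin{equation*}
A_1=\begin{pmatrix}1&-1\\0&1\end{pmatrix},\qquad
A_2=\begin{pmatrix}1&0\\-10&1\end{pmatrix},\qquad
A_3=\begin{pmatrix}1&-10\\0&1\end{pmatrix},
\end{equation*}
so that $A_1A_2A_3=\left(\begin{smallmatrix}11&-111\\-10&101\end{smallmatrix}\right)$ while the aggregate matrix is $\left(\begin{smallmatrix}1&-11\\-10&1\end{smallmatrix}\right)$: the $(1,2)$ entry of the true product, $-111$, lies far \emph{below} $-11$. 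The failure mechanism is precisely the one you flagged as delicate: the intermediate diagonal entry $f_2f_3+e_2d_3=101$ overshoots $1$ and then multiplies $d_1$, so the off-diagonal lower bound cannot be maintained, and no amount of bookkeeping rescues the $k$-factor entrywise domination as the inductive statement. What the paper proves instead (Lemma \ref{lemma: matrix multiplication}, used in Claim \ref{claim: pre C1}) is the weaker, vector-level statement that every partial product applied to the fixed positive vector $u$ is positive whenever the aggregate matrix times $u$ is positive. Its induction never forms a long product: at each step it merges exactly two adjacent factors into a single aggregate factor with data $(c_1c_2,\,d_1+d_2,\,e_1+e_2,\,f_1f_2)$ --- which remains in the admissible class $c,f\in(0,1]$, $d,e\ge0$ --- applies the induction hypothesis to the shortened sequence, and then uses the \emph{two}-factor entrywise domination
\begin{equation*}
A_1A_2=\begin{pmatrix}c_1c_2+d_1e_2&-c_1d_2-d_1f_2\\-e_1c_2-f_1e_2&f_1f_2+e_1d_2\end{pmatrix}
\;\ge\;\begin{pmatrix}c_1c_2&-(d_1+d_2)\\-(e_1+e_2)&f_1f_2\end{pmatrix}
\end{equation*}
(valid because $c_1,f_2\le1$ and all data are nonnegative) applied to a vector already known to be positive. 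Your steps (a) and (c) are sound; step (b) must be replaced by this two-at-a-time merge induction, or by some other argument that exploits more than the sign and range structure of the factors.
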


\section{Case Studies}\label{sec: case study}
In this section, we use several test networks to demonstrate that
\begin{enumerate}
\item SOCP is much more efficient to compute than SDP.
\item C1 holds. We will define a notion of {\it C1 margin} that quantifies how well C1 is satisfied, and show that the margin is sufficiently large for the test networks.
\item The feasible sets of OPF and OPF-m are similar. We will define a notion of {\it modification gap} that quantifies how different the feasible sets of OPF and OPF-m are, and show that the gap is small for the test networks.
\end{enumerate}

\subsection{Test networks}
Our test networks include modified IEEE 13-, 34-, 37-, 123-bus networks \cite{IEEE} and two real-world networks \cite{Masoud11,farivar2011optimal} in the service territory of Southern California Edison (SCE), a utility company in California, USA \cite{SCE}.

The IEEE networks are unbalanced three-phase radial networks with some elements (regulators, circuit switches, transformers, and distributed loads) that are not modeled in \eqref{PF}. Therefore we modify the networks as follows.
\begin{enumerate}
\item Assume that each bus has three phases and split its load uniformly among the three phases.
\item Assume that the three phases are decoupled so that the network becomes three identical single phase networks.
\item Model closed circuit switches as shorted lines and ignore open circuit switches. Model regulators as multiplying the voltages by constant factors.\footnote{The constant factors are taken to be 1.08 in the simulations.} Model transformers as lines with proper impedance.  Model the distributed load on a line as two identical spot loads, one at the each end of the line.
\end{enumerate}
The SCE networks, a 47-bus one and a 56-bus one, are shown in Fig. \ref{fig:circuit} with parameters given in Tables \ref{table:data} and \ref{table: 56}.
	\begin{figure*}[!htbp]
    \centering
    \includegraphics[scale=0.4]{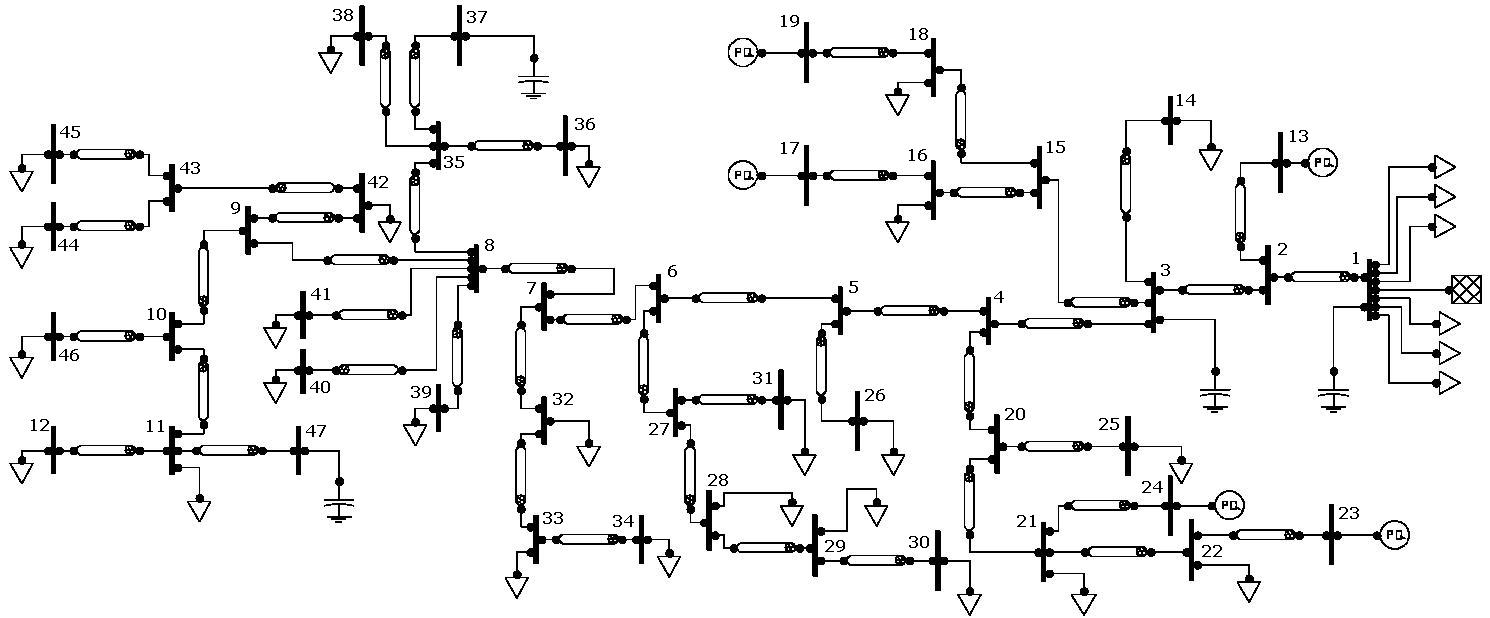}
    \includegraphics[scale=0.4]{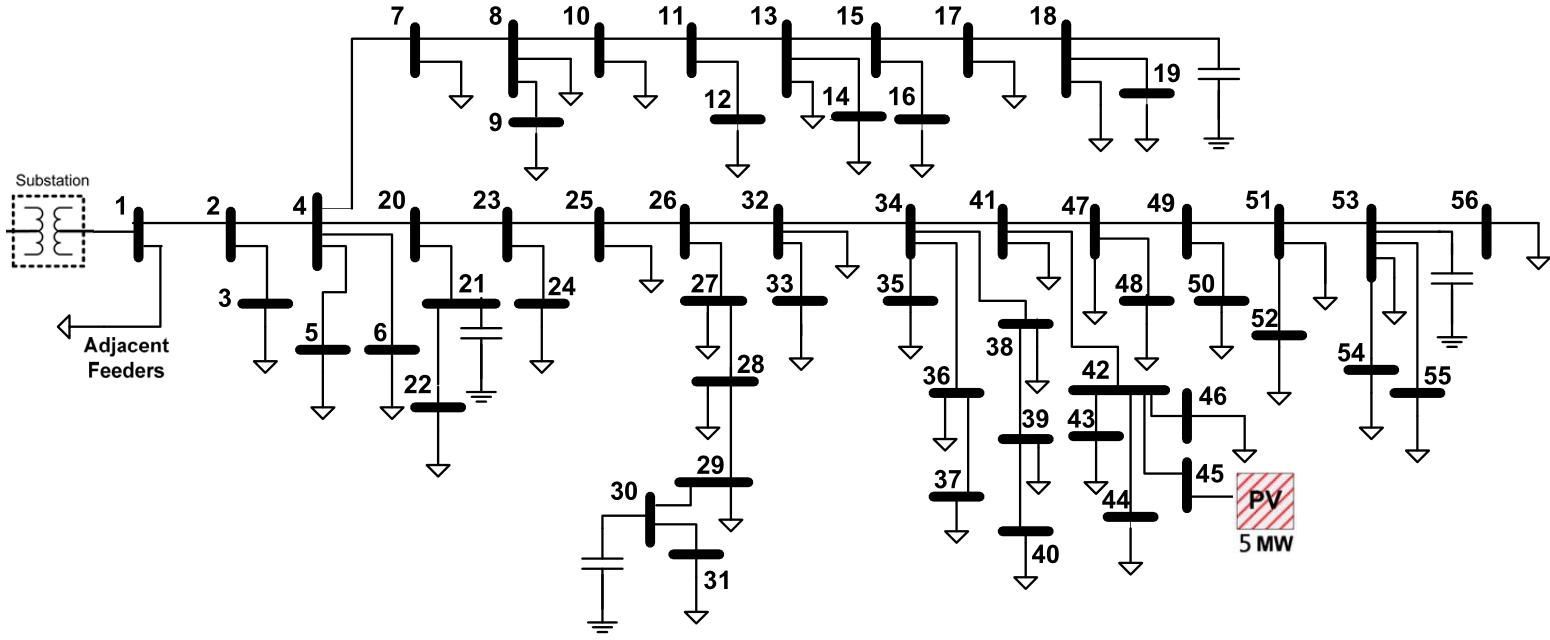}
    \caption{Topologies of the SCE 47-bus and 56-bus networks \cite{Masoud11,farivar2011optimal}.}
    \label{fig:circuit}
	\end{figure*}

\begin{table*}
\caption{Line impedances, peak spot load, and nameplate ratings of capacitors and PV generators of the 47-bus network.}
\centering
\scriptsize
\begin{tabular}{|p{0.42cm}|p{0.3cm}|m{0.43cm}|b{0.43cm}   |p{0.42cm}|p{0.3cm}|p{0.43cm}|p{0.43cm}    |p{0.42cm}|p{0.3cm}|p{0.43cm}|p{0.43cm} |p{0.3cm}|p{0.35cm}|c|c|c|c|}
\hline
\multicolumn{18}{|c|}{Network Data}\\
\hline
\multicolumn{4}{|c|}{Line Data}& \multicolumn{4}{|c|}{Line Data}& \multicolumn{4}{|c|}{Line Data}& \multicolumn{2}{|c|}{Load Data}& \multicolumn{2}{|c|}{Load Data}&\multicolumn{2}{|c|}{PV Generators}\\
\hline
From & To & R & X & From & To & R & X & From & To & R & X& Bus& Peak & Bus& Peak & Bus&Nameplate \\
Bus & Bus &$(\Omega)$& $(\Omega)$ & Bus & Bus & $(\Omega)$ & $(\Omega)$ & Bus & Bus & $(\Omega)$ & $(\Omega)$ & No &  MVA& No & MVA& No & Capacity\\
\hline
1	&	2	&	0.259	&	0.808	&	8	&	41	&	0.107	 &	0.031	&	 21	&	22	 &	 0.198	 &	 0.046	 &	1	&	30	&	34	&	0.2	&				 &		 \\	
2	&	13	&	0	&	0	&	8	&	35	&	0.076	 &	0.015	&	22	&	 23	&	0	 &	 0	 &	 11	&	 0.67	 &	36	&	0.27	&			 13	&	 1.5MW	\\	
2	&	3	&	0.031	&	0.092	&	8	&	9	&	0.031	 &	0.031	&	 27	&	31	 &	 0.046	 &	 0.015	 &	12	&	0.45	 &	38	&	0.45	 &			 17	&	 0.4MW	 \\	
3	&	4	&	0.046	&	0.092	&	9	&	10	&	0.015	 &	0.015	&	 27	&	28	 &	 0.107	 &	 0.031	 &	14	&	0.89	 &	39	&	1.34	 &			 19	&	 1.5 MW	 \\	
3	&	14	&	0.092	&	0.031	&	9	&	42	&	0.153	 &	0.046	&	 28	&	29	 &	 0.107	 &	 0.031	 &	16	&	0.07	 &	40	&	0.13	 &			 23	&	 1 MW	 \\	
3	&	15	&	0.214	&	0.046	&	10	&	11	&	0.107	 &	0.076	&	 29	&	30	 &	 0.061	 &	 0.015	 &	18	&	0.67	 &	41	&	0.67	 &			 24	&	 2 MW	 \\	
4	&	20	&	0.336	&	0.061	&	10	&	46	&	0.229	 &	0.122	&	 32	&	33	 &	 0.046	 &	 0.015	 &	21	&	0.45	 &	42	&	0.13	 &			 &		 \\\cline{17-18}		
4	&	5	&	0.107	&	0.183	&	11	&	47	&	0.031	 &	0.015	&	 33	&	34	 &	 0.031	 &	 0.010	 &	22	&	2.23	 &	44	&	0.45	&			 \multicolumn{2}{c|}{Shunt Capacitors} 			 \\  \cline{17-18}	
5	&	26	&	0.061	&	0.015	&	11	&	12	&	0.076	 &	0.046	&	 35	&	36	 &	 0.076	 &	 0.015	 &	25	&	0.45	 &	45	&	0.2	&			 \multicolumn{1}{c|}{Bus} &	 \multicolumn{1}{c|}{Nameplate}	\\		
5	&	6	&	0.015	&	0.031	&	15	&	18	&	0.046	 &	0.015	&	 35	&	37	 &	 0.076	 &	 0.046	 &	26	&	0.2	 &	46	&	0.45	&			 \multicolumn{1}{c|}{No.} &	 \multicolumn{1}{c|}{Capacity}	\\ \cline{15-18}		 
6	&	27	&	0.168	&	0.061	&	15	&	16	&	0.107	 &	0.015	&	 35	&	38	 &	 0.107	 &	 0.015	 &	28	&	0.13	 &	 \multicolumn{2}{c|}{ } 		&		&		 \\				
6	&	7	&	0.031	&	0.046	&	16	&	17	&	0	 &	0	&	42	&	 43	&	 0.061	 &	 0.015	&	 29	&	0.13	 &	 \multicolumn{2}{c|}{Base Voltage (kV) = 12.35}		 &	1	 &	 6000 kVAR	\\				
7	&	32	&	0.076	&	0.015	&	18	&	19	&	0	 &	0	&	43	&	 44	&	 0.061	 &	 0.015	&	 30	&	0.2	 &	\multicolumn{2}{c|}{Base kVA = 1000} 	 &	3	&	 1200 kVAR	 \\					
7	&	8	&	0.015	&	0.015	&	20	&	21	&	0.122	 &	0.092	&	 43	&	45	 &	 0.061	 &	 0.015	 &	31	&	0.07	 &	 \multicolumn{2}{c|}{Substation Voltage = 12.35}    &	 37 &		 1800 kVAR	 \\						
8	&	40	&	0.046	&	0.015	&	20	&	25	&	0.214	 &	0.046	&		 &		 &		 &		 &	 32	&	0.13	 &	\multicolumn{2}{c|}{}	 &	 47		 & 1800 kVAR  \\						
8	&	39	&	0.244	&	0.046	&	21	&	24	&	0	 &	0	&		&		 &		 &		 &	 33	&	 0.27	 &	\multicolumn{2}{c|}{}	& & \\
\hline
\end{tabular}
\label{table:data}
\end{table*}

\begin{centering}
\begin{table*}

\caption{Line impedances, peak spot load, and nameplate ratings of capacitors and PV generators
of the 56-bus network.}
\centering
\scriptsize
\begin{tabular}{|c|c|c|c|c|c|c|c|c|c|c|c|c|c|c|c|c|c|}
\hline
\multicolumn{18}{|c|}{Network Data}\\
\hline
\multicolumn{4}{|c|}{Line Data}& \multicolumn{4}{|c|}{Line Data}& \multicolumn{4}{|c|}{Line Data}& \multicolumn{2}{|c|}{Load Data}& \multicolumn{2}{|c|}{Load Data}&\multicolumn{2}{|c|}{Load Data}\\
\hline
From&To&R&X&From&To& R& X& From& To& R& X& Bus& Peak & Bus& Peak &  Bus &Peak  \\
Bus.&Bus.&$(\Omega)$& $(\Omega)$ & Bus. & Bus. & $(\Omega)$ & $(\Omega)$ & Bus.& Bus.& $(\Omega)$ & $(\Omega)$ & No.&  MVA& No.& MVA& No.& MVA\\
\hline

1	&	2	&	0.160	&	0.388	&	20	&	21	&	0.251	&	0.096	&	 39	&	40	 &	 2.349	 &	 0.964	&	3	&	0.057	&	29  &	0.044  & 52& 0.315	 \\
2	&	3	&	0.824	&	0.315	&	21	&	22	&	1.818	&	0.695	&	 34	&	41	 &	 0.115	 &	 0.278	&	5	&	0.121	&	31	&	0.053  & 54& 	 0.061	 \\
2	&	4	&	0.144	&	0.349	&	20	&	23	&	0.225	&	0.542	&	 41	&	42	 &	 0.159	 &	 0.384	&	6	&	0.049	&	32	&	0.223 & 55&	 0.055	 \\
4	&	5	&	1.026	&	0.421	&	23	&	24	&	0.127	&	0.028	&	 42	&	43	 &	 0.934	 &	 0.383	&	7	&	0.053	&	33	&	0.123 & 56&	 0.130	 \\\cline{17-18}
4	&	6	&	0.741	&	0.466   &	23	&	25	&	0.284	&	0.687	&	 42	&	44	 &	 0.506	 &	 0.163	&	8	&	0.047	&	34	&	0.067 & \multicolumn{2}{c|}{Shunt Cap}	 \\\cline{17-18}
4	&	7	&	0.528	&	0.468	&	25	&	26	&	0.171	&	0.414	&	 42	&	45	 &	 0.095	 &	 0.195	&	9	&	0.068	&	35	&	0.094&   \multicolumn{1}{c|}{Bus} &	 \multicolumn{1}{c|}{Mvar}			 \\\cline{17-18}
7	&	8	&	0.358	&	0.314	&	26	&	27	&	0.414	&	0.386	&	 42	&	46	 &	 1.915	 &	 0.769	&	10	&	0.048	&	36	&	0.097&  19& 	 0.6 	 \\
8	&	9	&	2.032	&	0.798	&	27	&	28	&	0.210	&	0.196	&	 41	&	47	 &	 0.157	 &	 0.379	&	11	&	0.067	&	37	&	0.281&  21&	 0.6 	 \\
8	&	10	&	0.502	&	0.441	&	28	&	29	&	0.395	&	0.369	&	 47	&	48	 &	 1.641	 &	 0.670	&	12	&	0.094	&	38	&	0.117&  30&	 0.6 		 \\
10	&	11	&	0.372	&	0.327	&	29	&	30	&	0.248	&	0.232	&	 47	&	49	 &	 0.081	 &	 0.196	&	14	&	0.057	&	39	&	0.131& 53&	 0.6 		 \\\cline{17-18}
11	&	12	&	1.431	&	0.999	&	30	&	31	&	0.279	&	0.260	&	 49	&	50	 &	 1.727	 &	 0.709	&	16	&	0.053	&	40	&	0.030& \multicolumn{2}{c|}{Photovoltaic}		 \\\cline{17-18}
11	&	13	&	0.429	&	0.377	&	26	&	32	&	0.205	&	0.495	&	 49	&	51	 &	 0.112	 &	 0.270	&	17	&	0.057	&	41	&	0.046& \multicolumn{1}{c|}{Bus} &	 \multicolumn{1}{c|}{Capacity}		 \\\cline{17-18}
13	&	14	&	0.671	&	0.257	&	32	&	33	&	0.263	&	0.073	&	 51	&	52	 &	 0.674	 &	 0.275	&	18	&	0.112	&	42	&	0.054&   & \\
13	&	15	&	0.457	&	0.401	&	32	&	34	&	0.071	&	0.171	&	 51	&	53	 &	 0.070	 &	 0.170	&	19	&	0.087	&	43	&	0.083&   45 &		 5MW		 \\\cline{17-18}
15	&	16	&	1.008	&	0.385	&	34	&	35	&	0.625	&	0.273	&	 53	&	54	 &	 2.041	 &	 0.780	&	22	&	0.063	&	44	&	0.057&  \multicolumn{2}{c|}{} \\
15	&	17	&	0.153	&	0.134	&	34	&	36	&	0.510	&	0.209	&	 53	&	55	 &	 0.813	 &	 0.334	&	24	&	0.135	&	46	&	0.134&  \multicolumn{2}{c|}{$V_\textrm{base}$ = 12kV}	 \\
17	&	18	&	0.971	&	0.722	&	36	&	37	&	2.018	&	0.829	&	 53	&	56	 &	 0.141	 &	 0.340	&	25	&	0.100	&	47	&	0.045& \multicolumn{2}{c|}{$S_\textrm{base}$ = 1MVA} 	 \\
18	&	19	&	1.885	&	0.721	&	34	&	38	&	1.062	&	0.406	&		 &		 &		 &		 &	 27	&	0.048	&	48	&	0.196&  \multicolumn{2}{c|}{$Z_\textrm{base}= 144 \Omega$ }		 \\
4	&	20	&	0.138	&	0.334	&	38	&	39	&	0.610	&	0.238	&		 &		 &		 &		 &	 28	&	0.038	&	50	&	0.045 &  \multicolumn{2}{c|}{}		 \\
		
\hline

\end{tabular}
\label{table: 56}
\end{table*}
\end{centering}

These networks have increasing penetration of distributed generation (DG). While the IEEE networks do not have any DG, the SCE 47-bus network has 56.6\% DG penetration ($6.4$MW nameplate distributed generation capacity against $11.3$MVA peak spot load) \cite{Masoud11}, and the SCE 56-bus network has 130.4\% DG penetration ($5$MW nameplate distributed generation capacity against $3.835$MVA peak spot load) \cite{farivar2011optimal} as listed in Table \ref{table: epsilon}.

	\begin{table}[!h]
	\caption{DG penetration, C1 margins, modification gaps, and computation times for different test networks.}
	\label{table: epsilon}
	\begin{center}
        \begin{tabular}{| c | c | c | c | c | c | c |}
        \hline
        & DG penetration & numerical precision & SOCP time & SDP time & C1 margin & estimated modification gap \\
        \hline
        IEEE 13-bus & 0\% & $10^{-10}$ & 0.5162s & 0.3842s & 27.6762 & 0.0362 \\
        IEEE 34-bus & 0\% & $10^{-10}$ & 0.5772s & 0.5157s & 20.8747 & 0.0232 \\
        IEEE 37-bus & 0\% & $10^{-9}$ & 0.5663s & 1.6790s & $+\infty$ & 0.0002 \\
        IEEE 123-bus & 0\% & $10^{-8}$ & 2.9731s & 32.6526s & 52.9636 & 0.0157 \\
        SCE 47-bus & 56.6\% & $10^{-8}$ & 0.7265s & 2.5932s & 2.5416 & 0.0082 \\
        SCE 56-bus & 130.4\% & $10^{-9}$ & 1.0599s & 6.0573s & 1.2972 & 0.0053 \\
        \hline
        \end{tabular}
        \end{center}
        \end{table}

\subsection{SOCP is more efficient to compute than SDP}
We compare the computation times of SOCP and SDP for the test networks, and summarize the results in Table \ref{table: epsilon}. All simulations in this paper use matlab 7.9.0.529 (64-bit) with toolbox cvx 1.21 on Mac OS X 10.7.5 with 2.66GHz Intel Core 2 Due CPU and 4GB 1067MHz DDR3 memory.

We use the following OPF setup throughout the simulations.
\begin{enumerate}
\item The objective is minimizing power loss in the network.
\item The power injection constraint is as follows. For each bus $i\in\hN^+$, there may be multiple devices including loads, capacitors, and PV panels. Assume that there is a total of $A_i$ such devices and label them by $1,2,\ldots,A_i$. Let $s_{i,a}$ denote the power injection of device $a$ for $a=1,2,\ldots,A_i$. If device $a$ is a load with given real and reactive power consumptions $p$ and $q$, then we impose
	\begin{equation}\label{load}
	s_{i,a} = -p-\ii q.
	\end{equation}
If device $a$ is a load with given peak apparent power $s_\mathrm{peak}$, then we impose
	\begin{equation}\label{load2}
	s_{i,a}=-s_{\mathrm{peak}}\exp(j\theta)
	\end{equation}
where $\theta=\cos^{-1}(0.9)$, i.e, power injection $s_{i,a}$ is considered to be a constant, obtained by assuming a power factor of 0.9 at peak apparent power. If device $a$ is a capacitor with nameplate $\overline{q}$, then we impose
	\begin{equation}\label{capacitor}
	\re(s_{i,a})=0 \text{ and } 0\leq\im(s_{i,a})\leq\overline{q}.
	\end{equation}
If device $a$ is a PV panel with nameplate $\overline{s}$, then we impose
	\begin{equation}\label{PV panel}
	\re(s_{i,a})\geq0 \text{ and } |s_{i,a}|\leq\overline{s}.
	\end{equation}
The power injection at bus $i$ is
	\[s_i=\sum_{a=1}^{A_i} s_{i,a}\]
where $s_{i,a}$ satisfies one of \eqref{load}--\eqref{PV panel}.
\item The voltage regulation constraint is considered to be $0.9^2\leq v_i\leq 1.1^2$ for $i\in\hN^+$. Note that we choose a small voltage lower bound 0.9 so that OPF is feasible for all test networks. We choose a big voltage upper bound 1.1 such that Condition C2 holds, and SDP/SOCP is exact if Condition C1 holds.
\end{enumerate}

    \begin{figure*}[!htbp]
    \centering
    \includegraphics[scale=0.4]{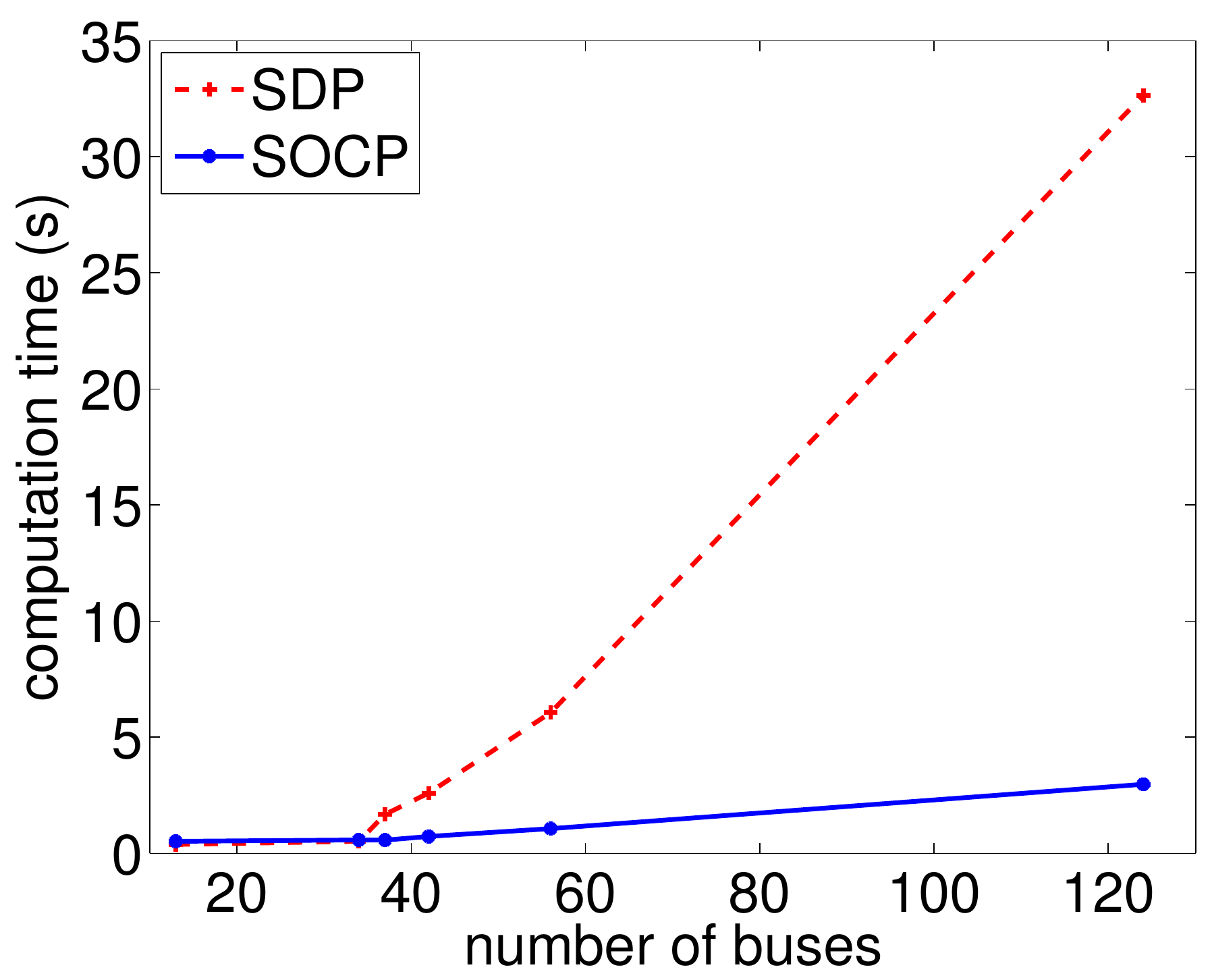}
    \includegraphics[scale=0.4]{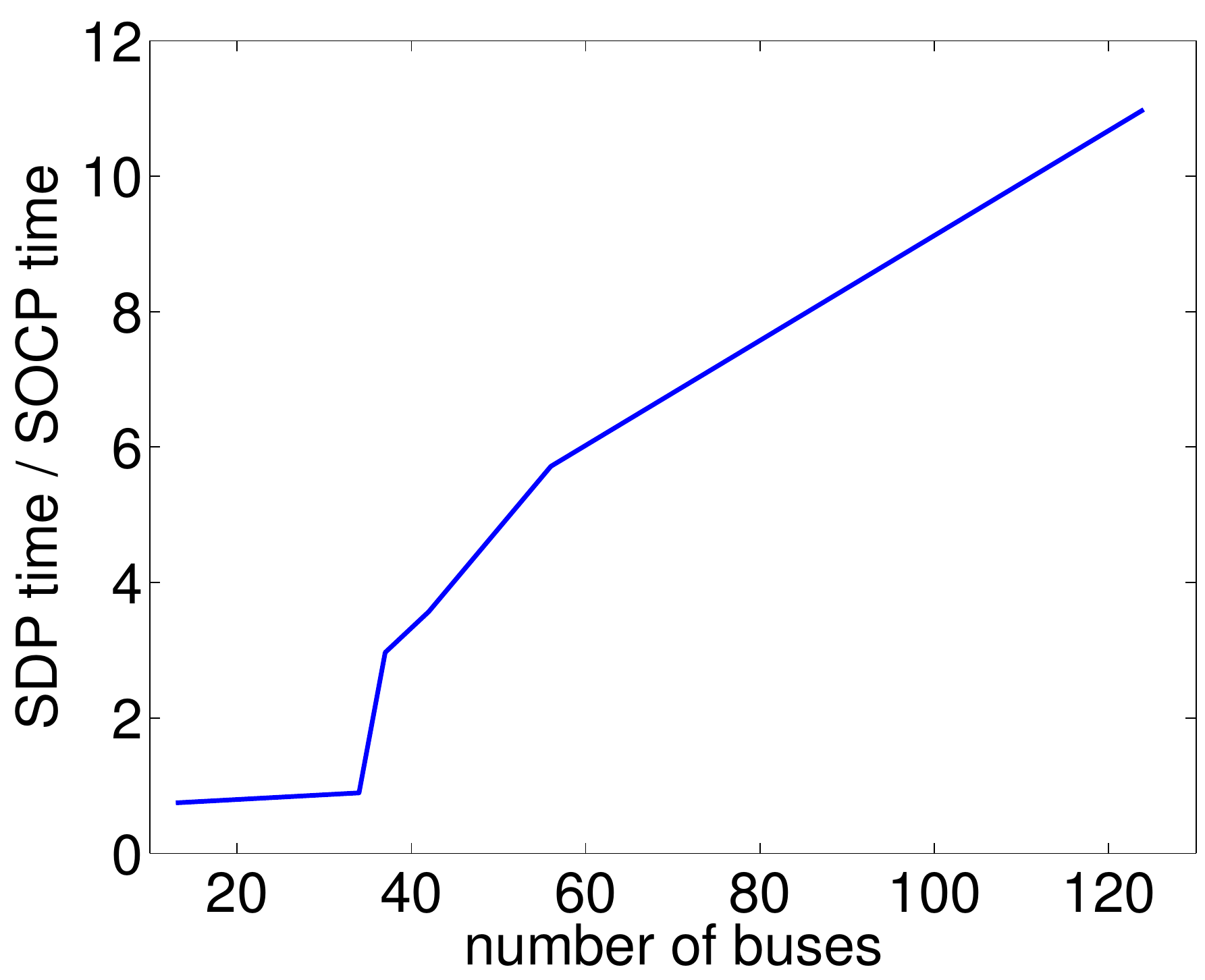}
    \caption{Comparison of the computation times for SOCP and SDP.}
    \label{fig: time}
    \end{figure*}
It can be seen from Fig. \ref{fig: time} that the computation time of SOCP scales up much more slowly than that of SDP as the number of buses increases, and that the improvement in efficiency (i.e., the ratio of SDP computation time to SOCP computation time) increases dramatically as the number of buses increases. Hence, even though the computation times of SOCP and SDP are similar for small networks, we expect SOCP to be much more efficient for medium to large networks.

SOCP and SDP can only be solved to certain numerical precision. The best numerical precision (without applying pre-conditioning techniques) that can be obtained by our simulation platform are listed in Table \ref{table: epsilon}. 

\subsection{C1 holds with a large margin}
We show that C1 holds with a large margin for all test networks. Recall that C1 is more difficult to satisfy as $(\overline{p},\overline{q})$ increases (Proposition \ref{prop: smaller injections}). One can scale up distributed generation (positive component of $\overline{p}$, $\overline{q}$) and shunt capacitors (positive component of $\overline{q}$) until C1 breaks down, and call the scaling factor when this happens the {\it C1 margin}. More specifically, for any scaling factor $\eta\geq0$, set
	\begin{eqnarray*}
	\overline{p}_i(\eta) &:=& \text{real load at }i + \eta * \text{PV nameplate at }i\\
	\overline{q}_i(\eta) &:=& \text{reactive load at }i + \eta * (\text{PV nameplate at }i + \text{shunt capacitor nameplate at }i)
	\end{eqnarray*}
for $i\in\hN^+$. When $\eta=0$, one has $(\overline{p}(\eta),\overline{q}(\eta))\leq0$ and therefore C1 holds according to Proposition \ref{pro: no reverse}. According to Proposition \ref{prop: smaller injections}, there exists a unique $\eta^*\in\mathbb{R}^+\cup\{+\infty\}$, such that
    \begin{subequations}\label{C1 margin}
	\begin{align}
	0\leq\eta < \eta^* \Rightarrow& \text{ C1 holds for }(r,x,\overline{p}(\eta),\overline{q}(\eta),\underline{v});\\
	\eta > \eta^* \Rightarrow& \text{ C1 does not hold for }(r,x,\overline{p}(\eta),\overline{q}(\eta),\underline{v}).
	\end{align}
    \end{subequations}

    \begin{definition}
    C1 margin is defined as the unique $\eta^*\geq0$ that satisfies \eqref{C1 margin}.
    \end{definition}
Physically, $\eta^*$ is the number of multiples one can scale up distributed generation and shunt capacitors before C1 breaks down. Noting that $\overline{p}=\overline{p}(1)$ and $\overline{q}=\overline{q}(1)$, C1 holds for $(r,x,\overline{p},\overline{q},\underline{v})$ if and only if $\eta^*>1$ (ignore the corner case where $\eta^*=1$). The larger $\eta^*$ is, the ``more safely'' C1 holds. The C1 margins for different test networks are summarized in Table \ref{table: epsilon}. The minimum C1 margin is 1.30, meaning that one can scale up distributed generation and shunt capacitors by 1.39 before C1 breaks down. C1 margin of the IEEE 37-bus network is $+\infty$, and this is because there is no distributed generation or shunt capacitors in the IEEE 37-bus network.

C1 margin is above 10 for all IEEE networks, but much smaller for SCE networks. This is because SCE networks have high penetration of distributed generation---big positive $\overline{p},\overline{q}$---that makes C1 more difficult to hold. On the other hand, the SCE 56-bus network already has a DG penetration of over 130\%, and one can still scale up DG by a factor of 1.30 before C1 breaks down. This finishes the demonstration that C1 is mild.

\subsection{The feasible sets of OPF and OPF-m are similar}
\label{sec: epsilon}
We show that OPF-m eliminates some feasible points of OPF that are close to the voltage upper bounds for each of the test networks. Let $\hF_{\text{OPF}}$ denote the feasible set of OPF and let $\|\cdot\|_{\infty}$ denote the $\ell_{\infty}$ norm.\footnote{The $\ell_\infty$ norm of a vector $x=(x_1,\ldots,x_n)\in\mathbb{R}^n$ is $\|x\|_\infty = \max\{|x_1|,\ldots,|x_n|\}$.} Define
	\begin{align} \label{eps}
	\eps ~\eqdef~ \max &~ \|\hat{v}(s)-v\|_\infty\\
	\text{s.t.} &~ (s,S,v,\ell,s_0)\in\hF_{\text{OPF}} \nonumber
	\end{align}
as the maximum deviation of $v$ from $\hat{v}$ over all OPF feasible points.

The value $\eps$ serves as a measure for the difference between the feasible sets of OPF and OPF-m. Consider the OPF problem with a stricter voltage upper bound constraint:
	\begin{eqnarray*}
	\textbf{OPF-$\eps$: }\min && \sum_{i\in\hN}f_i(\re(s_i)) \\
	\mathrm{over}  & & s,S,v,\ell,s_0\nonumber\\
	\mathrm{s.t.} && \eqref{OPF S}-\eqref{OPF constraint s};\\
	&& \underline{v}_i \leq v_i \leq \overline{v}_i-\varepsilon, \quad i\in \hN^+.
	\end{eqnarray*}
The feasible set $\hF_{\text{OPF-}\eps}$ is contained in $\hF_{\text{OPF}}$, and therefore
	\[\hat{v}_i(s) \leq v_i + \eps \leq \overline{v}_i-\eps+\eps = \overline{v}_i \text{ for }i\in\hN^+\]
for every $(s,S,\ell,v,s_0)\in\hF_{\text{OPF-}\eps}$ according to the definition of $\eps$. It follows that $\hF_{\text{OPF-}\eps} \subseteq \hF_{\text{OPF-m}}$ and therefore
	\[\hF_{\text{OPF-}\varepsilon}\subseteq\hF_\text{OPF-m}\subseteq\hF_\text{OPF}\]
as in Fig. \ref{fig: MOPF}. If $\eps$ is small, then $\hF_\text{OPF-m}$ is similar to $\hF_\text{OPF}$.
	\begin{figure}[!htbp]
     	\centering
     	\includegraphics[scale=0.4]{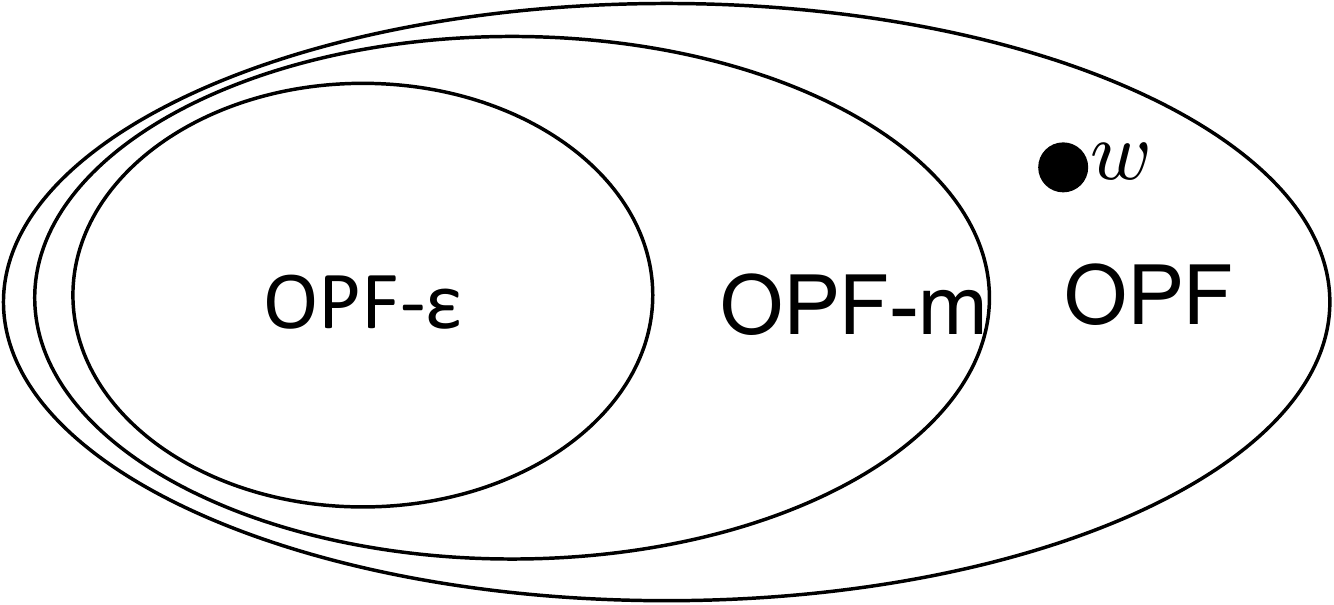}
      	\caption{Feasible sets of OPF-$\varepsilon$, OPF-m, and OPF. The point $w$ is feasible for OPF but not for OPF-m. }
      	\label{fig: MOPF}
	\end{figure}
Furthermore, any point $w$ that is feasible for OPF but infeasible for OPF-m is close to the voltage upper bound since $v_i>\overline{v}_i-\varepsilon$ for some $i\in \hN^+$. Such points are perhaps undesirable for robust operation.

\begin{definition}
The value $\eps$ defined in \eqref{eps} is called the {\it modification gap}.
\end{definition}

We demonstrate that the modification gap $\eps$ is small for all test networks through Monte-Carlo simulations. Note that $\eps$ is difficult to compute since the objective function in \eqref{eps} is not concave and the constraints in \eqref{eps} are not convex. We choose $1000$ samples of $s$, calculate the corresponding $(S,v,\ell,s_0)$ by solving power flow \eqref{BFM S}--\eqref{BFM ell} (using the {\it forward backward sweep} algorithm \cite{kersting2012distribution}) for each $s$, and compute $\eps(s):=\|\hat{v}(s)-v\|_\infty$ if $(s,S,v,\ell,s_0)\in\hF_{\text{OPF}}$. We use the maximum $\eps(s)$ over the samples as an estimate for $\eps$. The estimated modification gap $\eps^{\text{set}}$ we obtained for different test networks are listed in Table \ref{table: epsilon}. For example, $\eps^{\text{set}}=0.0362$ for the IEEE 13-bus network, in which case the voltage constraints are $0.81\leq v_i\leq 1.21$ for OPF and $0.81\leq v_i\leq 1.1738$ for OPF-$\eps$ (assuming $\eps=\eps^{\text{set}}$).
\section{Conclusion}
We have proved that SOCP is exact if Conditions C1 and C2 hold. C1 can be checked a priori, and follows from the physical intuition that all upstream power flows should increase if the power loss on a line is reduced. C2 requires that optimal power injections lie in a region ($\mathcal{S}_{\mathrm{volt}}$) where voltage upper bounds do not bind. C2 depends on SOCP solutions and cannot be checked a priori, but holds automatically after imposing the additional constraint that power injections lie in $\mathcal{S}_{\mathrm{volt}}$. This result unifies and generalizes our prior works \cite{Gan12,Gan13}.

We have proposed a modified OPF problem by imposing the additional constraint that power injections lie in $\mathcal{S}_{\mathrm{volt}}$ such that C2 holds automatically. The modified OPF problem has an exact SOCP relaxation if C1 holds. We have also proved that SOCP has at most one solution if it it convex and exact.

Empirical studies have verified that SOCP is computationally efficient, that C1 holds with large margin, and that the feasible sets of OPF and OPF-m are close for the IEEE 13-, 34-, 37-, 123-bus networks and two real-world networks.


\bibliographystyle{IEEEtran}
\bibliography{TAC_2013}


\appendices
\section{Proof of Lemma \ref{lemma: v}}\label{app: lemma v}
Let $(s,S,v,\ell,s_0)$ satisfy \eqref{BFM S}--\eqref{BFM v} and $\ell\geq0$ componentwise. It follows from \eqref{BFM S} that
	$$S_{ij} = s_i + \sum_{h:\,h\rightarrow i} (S_{hi}-z_{hi}\ell_{hi}) \leq s_i + \sum_{h:\,h\rightarrow i} S_{hi}$$
for $(i,j)\in\hE$. On the other hand, $\hat{S}_{ij}(s)$ is the solution of
	$$\hat{S}_{ij} = s_i + \!\!\!\sum_{h:\,h\rightarrow i} \hat{S}_{hi}$$
for $(i,j)\in\hE$. By induction from the leaf lines, one can show that $S_{ij} \leq \hat{S}_{ij}(s)$ for $(i,j)\in\hE$.

It follows from \eqref{BFM v} that 
	$$v_i-v_j = 2\re(\bar{z}_{ij}S_{ij})-|z_{ij}|^2\ell_{ij} \leq 2\re(\bar{z}_{ij}S_{ij}) \leq 2\re(\bar{z}_{ij}\hat{S}_{ij}(s))$$
for $(i,j)\in\hE$. Sum up the inequalities over $\hP_i$ to obtain
	$$v_i-v_0 \leq 2\sum_{(j,k)\in\hP_i}\re(\bar{z}_{jk}\hat{S}_{jk}(s)),$$
i.e., $v_i\leq \hat{v}_i(s)$, for $i\in \hN$.

\section{Proof of Theorem \ref{thm: condition}}\label{app: condition}
The proof idea of Theorem \ref{thm: condition} has been illustrated via a 3-bus network in Section \ref{sec: idea}. Now we present the proof of Theorem \ref{thm: condition} for general tree networks. Assume that $f_0$ is strictly increasing, and that C1 and C2 hold. If SOCP is not exact, then there exists an SOCP solution $w=(s,S,v,\ell,s_0)$ that violates \eqref{OPF ell}. We will construct another feasible point $w'=(s',S',v',\ell',s_0')$ of SOCP that has a smaller objective value than $w$. This contradicts the optimality of $w$, and therefore SOCP is exact.

\subsection*{Construction of $w'$}
The construction of $w'$ is as follows. Since $w$ violates \eqref{OPF ell}, there exists a leaf bus $l\in\hL$ with $m\in\{1,\ldots,n_l\}$ such that $w$ satisfies \eqref{OPF ell} on $(l_1,l_0),\ldots,(l_{m-1},l_{m-2})$ and violates \eqref{OPF ell} on $(l_m,l_{m-1})$. Without loss of generality, assume $l_k=k$ for $k=0,\ldots,m$ as in Fig. \ref{fig: tree}. Then
	\begin{equation}\label{m}
	\ell_{m,m-1} > \frac{|S_{m,m-1}|^2}{v_m}, \quad
	\ell_{k,k-1}    = \frac{|S_{k,k-1}|^2}{v_k} \text{ for }k=1,\ldots,m-1.
	\end{equation}
	
	\begin{figure}[!htbp]
     	\centering
     	\includegraphics[scale=0.4]{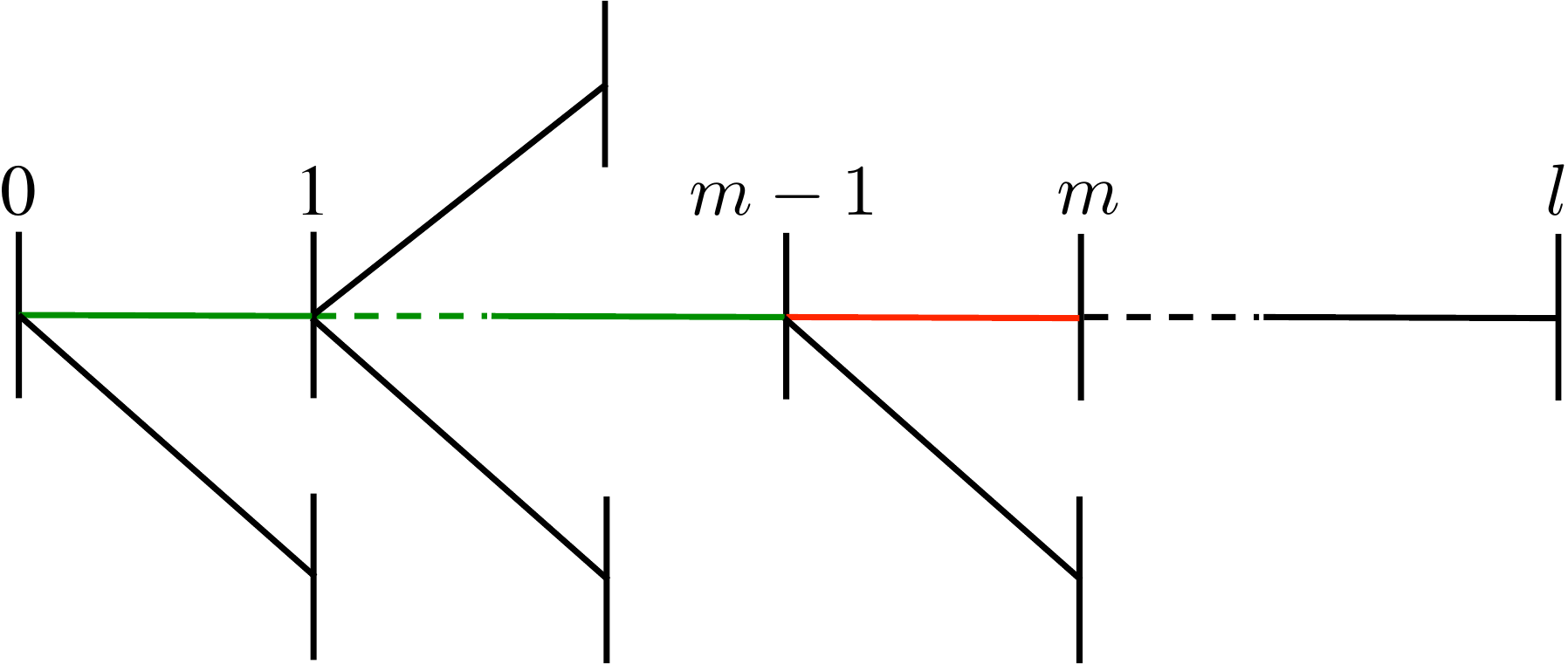}
      	\caption{Bus $l$ is a leaf bus, with $l_k=k$ for $k=0,\ldots,m$. Equality \eqref{OPF ell} is satisfied on $[0,m-1]$, but violated on $[m-1,m]$.}
      	\label{fig: tree}
	\end{figure}
	
One can then construct $w'=(s',S',v',\ell',s_0')$ as in Algorithm \ref{algorithm}.
	\begin{algorithm}
	\caption{Construct a feasible point}
	\label{algorithm}
	\begin{algorithmic}[1]
	
    	\REQUIRE an SOCP solution $w=(s,S,v,\ell,s_0)$ that violates \eqref{OPF ell}, a leaf bus $l\in\hL$ with $1\leq m\leq n_l$ such that \eqref{m} holds (assume $l_k=k$ for $k=0,\ldots,m$ without loss of generality).
	
    	\ENSURE $w'=(s',S',v',\ell',s_0')$.
	
    	\STATE Initialization. (Construct $s'$, $\ell'$ outside $\hP_m$, and $S'$ outside $\hP_{m-1}$.)\\
	keep $s$: $s'\leftarrow s$;\\
	keep $\ell$ outside path $\hP_{m}$: $\ell_{ij}'\leftarrow \ell_{ij}$ for $(i,j)\notin\hP_{m}$;\\
	keep $S$ outside path $\hP_{m-1}$: $S_{ij}'\leftarrow S_{ij}$ for $(i,j)\notin\hP_{m-1}$;
	
	\STATE Forward sweep. (Construct $\ell'$ on $\hP_{m}$, $S'$ on $\hP_{m-1}$, and $s_0'$.)\\
	{\bf for} $k=m,m-1,\ldots,1$ {\bf do}\\
    		$\qquad\ell_{k,k-1}' \leftarrow |S_{k,k-1}'|^2 / v_k$;\\
		$\qquad S_{k-1,k-2}' \leftarrow s_{k-1}\mathbbm{1}_{k\neq1} + \sum_{j:\,j\rightarrow k-1} (S_{j,k-1}'-z_{j,k-1}\ell_{j,k-1}')$;\\
	{\bf end for}\\
	$s_0'\leftarrow - S_{0,-1}'$;
	
	\STATE Backward sweep. (Construct $v'$.)\\
	$v_0'\leftarrow v_0$;\\
	$\hN_{\mathrm{visit}}=\{0\}$;\\
	{\bf while} $\hN_{\mathrm{visit}} \neq \hN$ {\bf do}\\
		$\qquad$find $i\notin\hN_{\mathrm{visit}}$ and $j\in\hN_{\mathrm{visit}}$ such that $i\rightarrow j$;\\
		$\qquad v_i'\leftarrow v_j'+2\re(\bar{z}_{ij}S_{ij}')-|z_{ij}|^2\ell_{ij}'$;\\
		$\hN_\mathrm{visit} \leftarrow \hN_\mathrm{visit}\cup\{i\}$;\\
	{\bf end while}
	\end{algorithmic}
	\end{algorithm}
The construction consists of three steps:
\begin{itemize}
\item[S1] In the initialization step, $s'$, $\ell'$ outside path $\hP_m$, and $S'$ outside path $\hP_{m-1}$ are initialized as the corresponding values in $w$. Since $s'=s$, the point $w'$ satisfies \eqref{OPF constraint s}. Furthermore, since $\ell_{ij}'=\ell_{ij}$ for $(i,j)\notin\hP_m$ and $S_{ij}'=S_{ij}$ for $(i,j)\notin\hP_{m-1}$, the point $w'$ also satisfies \eqref{OPF S} for $(i,j)\notin\hP_{m-1}$.
\item[S2] In the forward sweep step, $\ell_{k,k-1}'$ and $S_{k-1,k-2}'$ are recursively constructed for $k=m,\ldots,1$ by alternatively applying \eqref{OPF ell} (with $v'$ replaced by $v$) and \eqref{OPF S}/\eqref{OPF s}. Hence, $w'$ satisfies \eqref{OPF S} for $(i,j)\in\hP_{m-1}$ and \eqref{OPF s}.
\item[S3] In the backward sweep step, $v_i'$ is recursively constructed from bus 0 to leaf buses by applying \eqref{OPF v} consecutively. Hence, the point $w'$ satisfies \eqref{OPF v}.
\end{itemize}
The point $w'$ satisfies another important property given below.
\begin{lemma}\label{lemma: two points}
The point $w'$ satisfies $\ell_{ij}'\geq|S_{ij}'|^2/v_i$ for $(i,j)\in\hE$.
\end{lemma}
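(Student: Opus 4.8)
The plan is a direct case analysis over the edges $(i,j)\in\hE$, exploiting the fact that the denominator in the claimed inequality is the \emph{original} voltage $v_i$ (not the reconstructed $v_i'$), so the backward sweep plays no role here. The crucial structural observation is that $\hP_{m-1}\subset\hP_m$: since bus $m-1$ lies on the path from bus $m$ to bus $0$, the path $\hP_m$ consists of $\hP_{m-1}$ together with the single extra line $(m,m-1)$. Consequently every line that is \emph{not} on $\hP_m$ is also not on $\hP_{m-1}$, so for such a line the initialization step of Algorithm~\ref{algorithm} leaves both $\ell_{ij}'=\ell_{ij}$ and $S_{ij}'=S_{ij}$ unchanged.

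First I would treat the lines $(k,k-1)$ lying on the path $\hP_m$, i.e. $k=1,\ldots,m$. For these the forward sweep sets $\ell_{k,k-1}'=|S_{k,k-1}'|^2/v_k$ by definition. Since the sending (``from'') bus of line $(k,k-1)$ is bus $k$, this is exactly $\ell_{k,k-1}'=|S_{k,k-1}'|^2/v_i$ with $i=k$, so the desired inequality holds with equality on all of $\hP_m$.

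Second I would treat every line $(i,j)\notin\hP_m$. By the observation above $(i,j)\notin\hP_{m-1}$ as well, so the initialization gives $\ell_{ij}'=\ell_{ij}$ and $S_{ij}'=S_{ij}$. Because $w$ is an SOCP solution it satisfies the relaxed cone constraint \eqref{relax}, namely $\ell_{ij}\geq|S_{ij}|^2/v_i$. Substituting the unchanged quantities yields $\ell_{ij}'=\ell_{ij}\geq|S_{ij}|^2/v_i=|S_{ij}'|^2/v_i$, which is the claim for off-path lines. Combining the two cases covers all of $\hE$ and proves the lemma.

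There is no genuine analytic obstacle here; the only thing to watch is the bookkeeping of which quantities the construction actually modifies. In particular, it is essential not to confuse $v_i$ with $v_i'$: the lemma deliberately keeps the old denominator $v_i$, and it is only later---after the general-tree analogue of Claim~\ref{claim C1} establishes $v'\geq v$---that the true SOCP feasibility $\ell_{ij}'\geq|S_{ij}'|^2/v_i'$ is recovered from Lemma~\ref{lemma: two points} together with $v_i'\geq v_i>0$ and the monotonicity of $t\mapsto 1/t$ on the positive reals.
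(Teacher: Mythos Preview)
Your proof is correct and follows essentially the same two-case argument as the paper: equality on $\hP_m$ from the forward sweep, and for $(i,j)\notin\hP_m$ the initialization gives $\ell_{ij}'=\ell_{ij}\geq|S_{ij}|^2/v_i=|S_{ij}'|^2/v_i$. Your explicit remark that $\hP_{m-1}\subset\hP_m$ (so $S_{ij}'=S_{ij}$ off $\hP_m$) is a detail the paper uses implicitly.
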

\begin{proof}
When $(i,j)\notin\hP_m$, it follows from Step S1 that $\ell_{ij}'=\ell_{ij}\geq|S_{ij}|^2/v_i=|S_{ij}'|^2/v_i$. When $(i,j)\in\hP_m$, it follows from Step S2 that $\ell_{ij}'=|S_{ij}'|^2/v_i$. This completes the proof of Lemma \ref{lemma: two points}.
\end{proof}
Lemma \ref{lemma: two points} implies that if $v'\geq v$, then $w'$ satisfies \eqref{relax}.

\subsection*{Feasibility and Superiority of $w'$}
We will show that $w'$ is feasible for SOCP and has a smaller objective value than $w$. This result follows from Claims \ref{claim: C1 tree} and \ref{claim: C2 tree}.
\begin{claim}\label{claim: C1 tree}
$\text{C1} ~\Rightarrow~ S_{k,k-1}'>S_{k,k-1} \text{ for } k=0,\ldots,m-1 ~\Rightarrow~ v'\geq v$.
\end{claim}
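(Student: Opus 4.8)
The plan is to mimic the two-step argument already carried out for the 3-bus network in Section \ref{sec: idea}, but to propagate the inequalities along the whole path $\hP_m$ and to handle the branches that feed into $\hP_{m-1}$. First I would set $\Delta s := s' - s = 0$, $\Delta S := S' - S$, $\Delta \ell := \ell' - \ell$, $\Delta v := v' - v$, and establish the first implication $S_{k,k-1}' > S_{k,k-1}$ for $k = 0,\ldots,m-1$ by downward induction on $k$ from $k = m-1$ to $k = 0$. The base observation is that on line $(m,m-1)$ we have $\Delta \ell_{m,m-1} = |S_{m,m-1}|^2/v_m - \ell_{m,m-1} < 0$ by \eqref{m}, while on any branch line $(j,k-1)$ with $j \neq k$ hanging off the path we have $S_{j,k-1}' = S_{j,k-1}$ and $\ell_{j,k-1}' = \ell_{j,k-1}$ from Step S1, so those contribute nothing to $\Delta S$. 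Hence from the forward-sweep update $S_{k-1,k-2}' = s_{k-1}\mathbbm{1}_{k\neq 1} + \sum_{j:\,j\rightarrow k-1}(S_{j,k-1}' - z_{j,k-1}\ell_{j,k-1}')$ one gets the clean recursion $\Delta S_{k-1,k-2} = \Delta S_{k,k-1} - z_{k,k-1}\Delta\ell_{k,k-1}$ for $k = m, \ldots, 1$ (reading $\Delta S_{m,m-1} = 0$).

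The key step is then to express $\Delta\ell_{k,k-1}$ in terms of $\Delta S_{k,k-1}$. Since $\ell_{k,k-1}' = |S_{k,k-1}'|^2/v_k$ for $k \le m$ (Step S2, using the \emph{old} $v_k$) and, for $k \le m-1$, $\ell_{k,k-1} = |S_{k,k-1}|^2/v_k$ by \eqref{m}, we get $\Delta\ell_{k,k-1} = (|S_{k,k-1}'|^2 - |S_{k,k-1}|^2)/v_k$, which in matrix form reads $(\Delta P_{k-1,k-2}\ \ \Delta Q_{k-1,k-2})^T = B_k (\Delta P_{k,k-1}\ \ \Delta Q_{k,k-1})^T$ for $k = m-1,\ldots,1$, with
$$B_k = I - \frac{2}{v_k}\begin{pmatrix} r_{k,k-1} \\ x_{k,k-1}\end{pmatrix}\left(\frac{P_{k,k-1}+P_{k,k-1}'}{2}\ \ \frac{Q_{k,k-1}+Q_{k,k-1}'}{2}\right),$$
exactly as $B_1$ in \eqref{delta S0}. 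Unwinding the recursion gives $(\Delta P_{k-1,k-2}\ \ \Delta Q_{k-1,k-2})^T = -\,B_{k}B_{k+1}\cdots B_{m-1}\,u_m\,\Delta\ell_{m,m-1}$. Because $\Delta\ell_{m,m-1}<0$, positivity of $\Delta S_{k-1,k-2}$ is equivalent to $B_k B_{k+1}\cdots B_{m-1} u_m > 0$. The crux is to deduce this from C1, which only gives $\underline A_k \underline A_{k+1}\cdots \underline A_{m-1} u_m > 0$. I would prove, by a secondary induction on the length of the product, that replacing each $\underline A$ by the corresponding $B$ preserves strict positivity: the argument from \eqref{expansion} shows $B_k x > \underline A_k x$ whenever $x > 0$ (each entry of $B_k - \underline A_k$ times a positive vector is nonnegative, using Lemma \ref{lemma: v} to bound $P_{k,k-1}, P_{k,k-1}' \le \hat P_{k,k-1}^+(\overline p)$ and likewise for $Q$, together with $\underline v_k \le v_k$), and since all $B_j$, $\underline A_j$ have nonnegative off-diagonal structure of the form $I - (\text{nonneg. rank-one})$ one checks they map the positive orthant's closure into itself, so the inequality propagates through the product. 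This yields $\Delta S_{k,k-1} > 0$ for $k = 0,\ldots,m-1$.

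For the second implication, $\Delta S_{k,k-1} > 0$ for $k = 0,\ldots,m-1$ $\Rightarrow$ $v' \ge v$, I would run the backward sweep. On each line on the path $\hP_{m-1}$, the same computation as in the 3-bus case gives $\Delta v_{k} - \Delta v_{k-1} = 2\re(\bar z_{k,k-1}\Delta S_{k,k-1}) - |z_{k,k-1}|^2\Delta\ell_{k,k-1} = \re(\bar z_{k,k-1}\Delta S_{k,k-1}) + \re(\bar z_{k,k-1}\Delta S_{k-1,k-2}) > 0$ for $k = 1,\ldots,m-1$, and on line $(m,m-1)$, $\Delta v_m - \Delta v_{m-1} = 2\re(\bar z_{m,m-1}\Delta S_{m,m-1}) - |z_{m,m-1}|^2\Delta\ell_{m,m-1} = -|z_{m,m-1}|^2\Delta\ell_{m,m-1} > 0$; all other lines lie outside $\hP_m$, where $\Delta S$ and $\Delta\ell$ both vanish, so $\Delta v$ stays constant across them. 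Starting from $\Delta v_0 = 0$ and walking outward, a straightforward induction over $\hN_{\mathrm{visit}}$ gives $\Delta v_i \ge 0$ for all $i \in \hN$, with strict inequality on $\hP_m$. The main obstacle is the matrix-positivity propagation in the second paragraph — making precise that the "error matrices" $B_j - \underline A_j$ are entrywise of the right sign and that the monotone-map property lets C1 transfer from the $\underline A$-product to the $B$-product; everything else is bookkeeping that parallels the 3-bus argument.
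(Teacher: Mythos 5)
Your setup matches the paper's proof exactly: the same clean recursion $\Delta S_{k-1,k-2} = \Delta S_{k,k-1} - z_{k,k-1}\Delta\ell_{k,k-1}$ with branch lines contributing nothing, the same matrices $B_k$, the reduction to showing $B_k B_{k+1}\cdots B_{m-1}u_m>0$, and your backward-sweep argument for the second implication is correct and essentially identical to the paper's. The gap is in the one step you yourself flag as the main obstacle, and your proposed resolution does not work. You claim that because $B_j$ and $\underline{A}_j$ are of the form $I - (\text{nonnegative rank-one})$, they ``map the positive orthant's closure into itself,'' so that the pointwise inequality $B_k x \geq \underline{A}_k x$ for $x\geq 0$ propagates through the product. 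That is false: a matrix $I - ub^T$ with $u,b\geq 0$ has \emph{nonpositive} off-diagonal entries and possibly negative diagonal entries, and in general sends positive vectors to vectors with negative components. Indeed, if the $\underline{A}_j$ preserved the positive orthant, then $\underline{A}_s\cdots\underline{A}_{t-1}u_t>0$ would hold automatically from $u_t>0$ and C1 would be vacuous --- the whole point of C1 is that these matrices are \emph{not} monotone. Concretely, after one step you have $y:=B_{m-1}u_m \geq \underline{A}_{m-1}u_m =: y'>0$, but since $\underline{A}_{m-2}$ is not order-preserving you cannot conclude $\underline{A}_{m-2}y \geq \underline{A}_{m-2}y'$, and the induction stalls.

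The paper closes this gap with a different and sharper device (its Lemma \ref{lemma: dynamical system}): replace the $\underline{A}$'s by $B$'s one at a time \emph{from the left}, and use the rank-one structure of the difference, $B_k-\underline{A}_k = u_k b_k^T$ with $b_k\geq0$, to write
\begin{equation*}
B_s\cdots B_k\underline{A}_{k+1}\cdots\underline{A}_{m-1}u_m
= B_s\cdots B_{k-1}\underline{A}_{k}\cdots\underline{A}_{m-1}u_m
+ \bigl(b_k^T\,\underline{A}_{k+1}\cdots\underline{A}_{m-1}u_m\bigr)\,B_s\cdots B_{k-1}u_k .
\end{equation*}
The scalar $b_k^T\underline{A}_{k+1}\cdots\underline{A}_{m-1}u_m$ is nonnegative because C1 gives positivity of $\underline{A}$-products starting at \emph{every} intermediate index, and the vector $B_s\cdots B_{k-1}u_k$ is positive by an induction on the length of the product --- which is exactly why C1 must be assumed for all pairs $1\leq s\leq t\leq n_l$ and not merely for products ending in $u_m$. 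Your proposal never uses this rank-one factorization of the error (only its consequence $B_kx\geq\underline{A}_kx$), and without it the positivity transfer cannot be completed. Everything else in your write-up is sound.
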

Claim \ref{claim: C1 tree} is proved later in this appendix. Here we illustrate with Fig. \ref{fig: illustrate} that $S_{k,k-1}'>S_{k,k-1} \text{ for } k=0,\ldots,m-1$ seems natural to hold.	
	\begin{figure}[!htbp]
     	\centering
     	\includegraphics[scale=0.4]{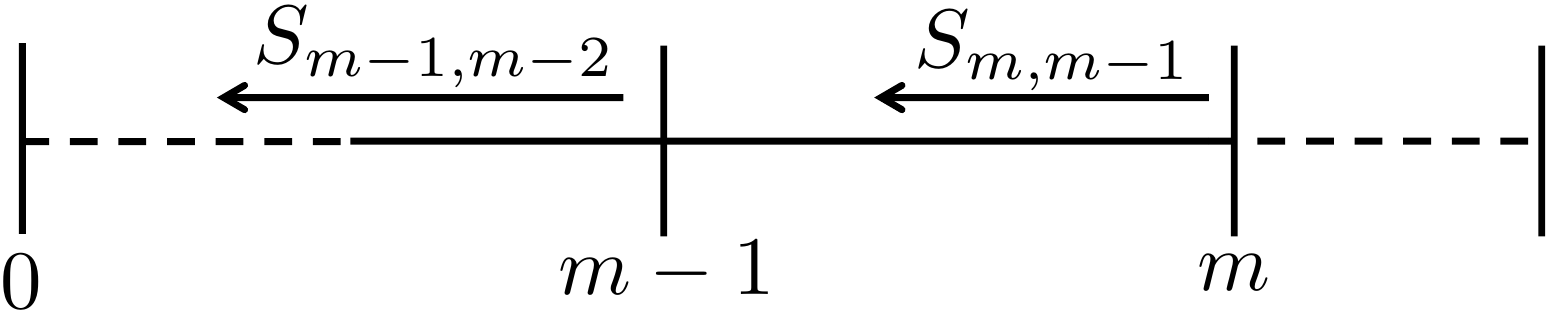}
      	\caption{Illustration of $S_{k,k-1}'>S_{k,k-1} \text{ for } k=0,\ldots,m-1$.}
      	\label{fig: illustrate}
	\end{figure}
Note that $S_{m,m-1}'=S_{m,m-1}$ and that $\ell_{m,m-1}'=|S_{m,m-1}'|^2/v_m = |S_{m,m-1}|^2/v_m < \ell_{m,m-1}$. Define $\Delta w=(\Delta s,\Delta S,\Delta v,\Delta \ell, \Delta s_0)=w'-w$, then $\Delta \ell_{m,m-1}<0$ and therefore
	\begin{equation}
	\Delta S_{m-1,m-2} = \Delta S_{m,m-1}-z_{m,m-1}\Delta\ell_{m,m-1} = -z_{m,m-1}\Delta\ell_{m,m-1} > 0. \label{m difference}
	\end{equation}
Intuitively, after increasing $S_{m-1,m-2}$, upstream reverse power flow $S_{k,k-1}$ is likely to increase for $k=0,\ldots,m-2$. C1 is a condition that ensures $S_{k,k-1}$ to increase for $k=0,\ldots,m-1$.

\begin{claim}\label{claim: C2 tree}
$\text{C2} ~\Rightarrow~ v'\leq \overline{v}$.
\end{claim}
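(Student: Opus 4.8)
The plan is to show that under C2 — namely that every SOCP solution $w=(s,S,v,\ell,s_0)$ has $s\in\hS_{\mathrm{volt}}$ — the constructed point $w'$ satisfies $v'\leq\overline{v}$. The key observation is that the construction in Algorithm \ref{algorithm} keeps the power injection unchanged: $s'=s$ (Step S1). Therefore $s'\in\hS_{\mathrm{volt}}$ as well, which by definition of $\hS_{\mathrm{volt}}$ means $\hat{v}_i(s')\leq\overline{v}_i$ for all $i\in\hN^+$.

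Next I would invoke Lemma \ref{lemma: v}. The point $w'$ satisfies the branch flow equations \eqref{BFM S}--\eqref{BFM v} — indeed Steps S1--S3 were designed precisely so that $w'$ satisfies \eqref{OPF S}--\eqref{OPF v}, which are \eqref{BFM S}--\eqref{BFM v} — and by Lemma \ref{lemma: two points} together with $v>0$ (which follows from $\underline{v}>0$ in assumption A4, once we know $v'\geq v$ is not even needed here since $\ell'_{ij}\geq |S'_{ij}|^2/v_i\geq 0$), we have $\ell'\geq 0$ componentwise. Hence Lemma \ref{lemma: v} applies to $w'$ and gives $v'\leq\hat{v}(s')$.

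Combining the two facts, for every $i\in\hN^+$,
\[
v_i' \;\leq\; \hat{v}_i(s') \;=\; \hat{v}_i(s) \;\leq\; \overline{v}_i ,
\]
which is exactly $v'\leq\overline{v}$. This completes the proof of Claim \ref{claim: C2 tree}. I do not anticipate any real obstacle here: the only subtlety is making sure the hypotheses of Lemma \ref{lemma: v} genuinely hold for $w'$ (that $w'$ satisfies \eqref{BFM S}--\eqref{BFM v} and that $\ell'\geq 0$), and both are immediate from the construction — $\ell'_{ij}$ is set either equal to the nonnegative $\ell_{ij}$ or to $|S'_{ij}|^2/v_i$, which is nonnegative because $v_i\geq\underline{v}_i>0$.
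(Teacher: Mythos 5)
Your proof is correct and takes essentially the same route as the paper, which simply writes $v'\leq\hat{v}(s')=\hat{v}(s)\leq\overline{v}$ by Lemma \ref{lemma: v} and C2. You additionally spell out why Lemma \ref{lemma: v} applies to $w'$ (that $w'$ satisfies \eqref{BFM S}--\eqref{BFM v} by construction and that $\ell'\geq0$ since each $\ell'_{ij}$ is either the original nonnegative $\ell_{ij}$ or $|S'_{ij}|^2/v_i\geq0$), which the paper leaves implicit.
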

\begin{proof}
When C2 holds, it follows from Lemma \ref{lemma: v} that $v'\leq \hat{v}(s')=\hat{v}(s)\leq\overline{v}$.
\end{proof}

It follows from Claims \ref{claim: C1 tree} and \ref{claim: C2 tree} that $\underline{v}\leq v\leq v'\leq\overline{v}$, and therefore $w'$ satisfies \eqref{OPF constraint v}. Besides, it follows from Lemma \ref{lemma: two points} that $\ell_{ij}'\geq|S_{ij}'|^2/v_i\geq|S_{ij}'|^2/v_i'$ for $(i,j)\in\hE$, i.e., $w'$ satisfies \eqref{relax}. Hence, $w'$ is feasible for SOCP. Furthermore, $w'$ has a smaller objective value than $w$ because
	\begin{eqnarray*}
	\sum_{i\in\hN} f_i(\re(s_i')) - \sum_{i\in\hN} f_i(\re(s_i)) = f_0(-\re(S_{0,-1}')) - f_0(-\re(S_{0,-1})) < 0.
	\end{eqnarray*}
This contradicts with the optimality of $w$, and therefore SOCP is exact. To complete the proof, we are left to prove Claim \ref{claim: C1 tree}.

\subsection*{Proof of Claim \ref{claim: C1 tree}}
First show that C1 implies $\Delta S_{k,k-1}>0$ for $k=0,\ldots,m-1$. Recall that $S=P+\ii Q$ and that $u_i=(r_{ij} ~x_{ij})^T$. It follows from \eqref{m difference} that $$(\Delta P_{m-1,m-2} ~\Delta Q_{m-1,m-2})^T = -u_m\Delta \ell_{m,m-1}>0.$$ For any $k\in\{1,\ldots,m-1\}$, one has
	\begin{eqnarray*}
	\Delta S_{k-1,k-2} = \Delta S_{k,k-1} - z_{k,k-1} \Delta \ell_{k,k-1}
	= \Delta S_{k,k-1} - z_{k,k-1} \frac{|S_{k,k-1}'|^2-|S_{k,k-1}|^2}{v_k},
	\end{eqnarray*}
which is equivalent to
	\begin{equation*}\label{}
	\begin{pmatrix} \Delta P_{k-1,k-2} \\ \Delta Q_{k-1,k-2} \end{pmatrix}
	 = B_k
	 \begin{pmatrix} \Delta P_{k,k-1} \\ \Delta Q_{k,k-1} \end{pmatrix}
	\end{equation*}
where
	$$B_k = I-\frac{2}{v_k}\begin{pmatrix} r_{k,k-1} \\ x_{k,k-1} \end{pmatrix}
	\left(\frac{P_{k,k-1}+P_{k,k-1}'}{2} \quad \frac{Q_{k,k-1}+Q_{k,k-1}'}{2}\right).$$
Hence, one has
	$$(\Delta P_{k-1,k-2} ~\Delta Q_{k-1,k-2})^T = -B_kB_{k+1}\cdots B_{m-1} u_m\Delta\ell_{m,m-1}$$ 
for $k=1,\ldots,m$. To show that $\Delta S_{k,k-1}>0$ for $k=0,\ldots,m-1$, it suffices to show that $B_k\cdots B_{m-1}u_m>0$ for $k=1,\ldots,m$.

C1 implies that $\underline{A}_s\cdots \underline{A}_{t-1}u_t>0$ for $1\leq s\leq t\leq m$. One also has $B_k-\underline{A}_k = u_kb_k^T$ where
	\begin{equation*}
	b_k = \left(
	\frac{2\hat{P}_{k,k-1}^+(\overline{p})}{\underline{v}_k}    -    \frac{P_{k,k-1}+P_{k,k-1}'}{v_k} \qquad
	\frac{2\hat{Q}_{k,k-1}^+(\overline{q})}{\underline{v}_k}    -    \frac{Q_{k,k-1}+Q_{k,k-1}'}{v_k}
	\right)^T\geq0
	\end{equation*}
for $k=1,\ldots,m-1$. To show that $B_k\cdots B_{m-1}u_m>0$ for $k=1,\ldots,m$, we prove the following lemma.

	\begin{lemma}\label{lemma: dynamical system}
	Given $m\geq1$ and $d\geq1$. Let $\underline{A}_1,\ldots,\underline{A}_{m-1}, A_1,\ldots,A_{m-1}\in\mathbb{R}^{d\times d}$ 	and $u_1,\ldots,u_m\in\mathbb{R}^d$ satisfy
	\begin{itemize}
	\item $\underline{A}_s\cdots \underline{A}_{t-1}u_t>0$ when $1\leq s\leq t \leq m$;
	\item there exists $b_k\in\mathbb{R}^d$ that satisfies $b_k\geq0$ and $A_k-\underline{A}_k=u_kb_k^T$, for $k=1,\ldots,m-1$.
	\end{itemize}
	Then 
	\begin{equation}\label{dynamical}
	A_s\cdots A_{t-1}u_t>0
	\end{equation}
when $1\leq s\leq t \leq m$.
	\end{lemma}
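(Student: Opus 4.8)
The plan is to prove, by induction on the segment length $t-s$, a more explicit identity from which \eqref{dynamical} follows in one line: for every $1\le s\le t\le m$ there exist scalars $\gamma_s,\ldots,\gamma_{t-1}\ge 0$ such that
\begin{equation*}
A_s\cdots A_{t-1}u_t \;=\; \underline{A}_s\cdots\underline{A}_{t-1}u_t \;+\; \sum_{j=s}^{t-1}\gamma_j\,\underline{A}_s\cdots\underline{A}_{j-1}u_j ,
\end{equation*}
with the usual convention that an empty matrix product equals the identity (so the $j=s$ term, if present, is $\gamma_s u_s$). Once this identity is established, \eqref{dynamical} is immediate: the leading term $\underline{A}_s\cdots\underline{A}_{t-1}u_t$ is strictly positive by the first hypothesis applied with the index pair $(s,t)$; each summand is a nonnegative multiple of $\underline{A}_s\cdots\underline{A}_{j-1}u_j$, which is strictly positive by the first hypothesis applied with $(s,j)$ (valid since $s\le j\le t-1<m$); hence the right-hand side is strictly positive.

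For the base case $t=s$ both sides reduce to $u_t$, and $u_t>0$ is exactly the first hypothesis with $s=t$. For the inductive step I would write $A_s\cdots A_{t-1}u_t=A_s\left(A_{s+1}\cdots A_{t-1}u_t\right)$, apply the induction hypothesis to the shorter segment $[s+1,t]$ to expand the inner vector as $\underline{A}_{s+1}\cdots\underline{A}_{t-1}u_t+\sum_{j=s+1}^{t-1}\gamma_j'\underline{A}_{s+1}\cdots\underline{A}_{j-1}u_j$ with $\gamma_j'\ge 0$, and then substitute $A_s=\underline{A}_s+u_sb_s^T$. Left-multiplying the expansion by $\underline{A}_s$ reproduces precisely $\underline{A}_s\cdots\underline{A}_{t-1}u_t+\sum_{j=s+1}^{t-1}\gamma_j'\underline{A}_s\cdots\underline{A}_{j-1}u_j$, while left-multiplying by $u_sb_s^T$ yields $\gamma_s u_s$, where $\gamma_s:=b_s^T\underline{A}_{s+1}\cdots\underline{A}_{t-1}u_t+\sum_{j=s+1}^{t-1}\gamma_j'\,b_s^T\underline{A}_{s+1}\cdots\underline{A}_{j-1}u_j$. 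Collecting terms and using $u_s=\underline{A}_s\cdots\underline{A}_{s-1}u_s$ gives the claimed form with coefficients $\gamma_j=\gamma_j'\ge 0$ for $j\ge s+1$ and the new coefficient $\gamma_s$.

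The one point that makes the induction close is nonnegativity of $\gamma_s$: every vector appearing inside the scalar $\gamma_s$, namely $\underline{A}_{s+1}\cdots\underline{A}_{t-1}u_t$ and $\underline{A}_{s+1}\cdots\underline{A}_{j-1}u_j$ for $s+1\le j\le t-1$, is strictly positive by the first hypothesis, the $\gamma_j'$ are nonnegative by the induction hypothesis, and $b_s\ge 0$, so $\gamma_s\ge 0$. I would close by spelling out the index checks ($t\le m$ keeps all matrix indices in $\{1,\ldots,m-1\}$; $s+1\le t$ since we are outside the base case).

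The main obstacle is resisting the naive approach of inducting directly on the bare statement ``$A_s\cdots A_{t-1}u_t>0$'': peeling a single matrix off the front leaves $\underline{A}_s$ acting on an arbitrary strictly positive vector, and no hypothesis controls that product's sign. The real content is therefore the bookkeeping insight that the rank-one corrections $u_kb_k^T$ only ever inject nonnegative multiples of the distinguished strictly positive vectors $\underline{A}_s\cdots\underline{A}_{j-1}u_j$, which is why the stronger identity above is the right object to carry through the induction; after that, only routine index and empty-product manipulations remain.
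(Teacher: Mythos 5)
Your proof is correct, and it reaches the conclusion by a route that is structurally different from the paper's, even though both arguments induct on the segment length $t-s$ and both exploit the rank-one identity $A_k=\underline{A}_k+u_kb_k^T$ to generate nonnegative corrections along known-positive directions. The paper inducts on the \emph{bare} statement $A_s\cdots A_{t-1}u_t>0$ and, in the inductive step, telescopes through the hybrid products $A_s\cdots A_k\underline{A}_{k+1}\cdots\underline{A}_{t-1}u_t$, swapping one $\underline{A}_k$ for $A_k$ at a time from $k=t-1$ down to $k=s$; each swap costs a correction $\bigl(b_k^T\underline{A}_{k+1}\cdots\underline{A}_{t-1}u_t\bigr)A_s\cdots A_{k-1}u_k$, whose sign is controlled by applying the induction hypothesis to the shorter \emph{all-$A$} product $A_s\cdots A_{k-1}u_k$. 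You instead strengthen the induction hypothesis to the explicit identity expressing $A_s\cdots A_{t-1}u_t$ as $\underline{A}_s\cdots\underline{A}_{t-1}u_t$ plus a nonnegative combination of the vectors $\underline{A}_s\cdots\underline{A}_{j-1}u_j$, and peel $A_s$ off the left; the positivity you then need is only that of \emph{all-$\underline{A}$} products, which is handed to you directly by the first hypothesis rather than by induction. Your diagnosis of why the naive left-peeling fails is accurate, and your fix (carry the decomposition) and the paper's fix (telescope through hybrids so the bare hypothesis suffices) are two equally valid resolutions of the same obstruction; yours yields slightly more information (an explicit representation of $A_s\cdots A_{t-1}u_t$ as a conic combination), while the paper's keeps the induction statement minimal. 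The index bookkeeping in your argument ($s\le m-1$ so $b_s$ exists, and all invocations of the first hypothesis use pairs $(s+1,j)$ with $j\le t\le m$) checks out.
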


\begin{proof}
We prove that \eqref{dynamical} holds when $1\leq t\leq s\leq m$ by mathematical induction on $t-s$.
\begin{itemize}
\item[i)] When $t-s=0$, one has
    $A_s\cdots A_{t-1}u_t = u_t = \underline{A}_s\cdots \underline{A}_{t-1}u_t > 0.$
\item[ii)] Assume that \eqref{dynamical} holds when $t-s=0,1,\ldots, K$ ($0\leq K\leq m-2$). When $t-s=K+1$, one has
    \begin{eqnarray*}
    A_s\cdots A_k\underline{A}_{k+1}\cdots \underline{A}_{t-1}u_t
    &=& A_s\cdots A_{k-1}\underline{A}_{k}\underline{A}_{k+1}\cdots \underline{A}_{t-1} u_t
    + A_s\cdots A_{k-1}(A_k-\underline{A}_{k})\underline{A}_{k+1}\cdots \underline{A}_{t-1} u_t\\
    &=& A_s\cdots A_{k-1}\underline{A}_{k}\cdots \underline{A}_{t-1} u_t
    + A_s\cdots A_{k-1}u_kb_k^T\underline{A}_{k+1}\cdots \underline{A}_{t-1} u_t\\
    &=& A_s\cdots A_{k-1}\underline{A}_{k}\cdots \underline{A}_{t-1} u_t 
    + \left(b_k^T\underline{A}_{k+1}\cdots \underline{A}_{t-1} u_t\right)A_s\cdots A_{k-1}u_k
    \end{eqnarray*}
for $k=s, \ldots, t-1$. Since $b_k\geq0$ and $\underline{A}_{k+1}\cdots \underline{A}_{t-1} u_t>0$, the term $b_k^T\underline{A}_{k+1}\cdots \underline{A}_{t-1} u_t\geq0$. According to induction hypothesis, $A_s\cdots A_{k-1} u_k>0$. Hence,
   \begin{eqnarray*}
    A_s\cdots A_k\underline{A}_{k+1}\cdots \underline{A}_{t-1}u_t
    \geq A_s\cdots A_{k-1}\underline{A}_{k}\cdots \underline{A}_{t-1} u_t
    \end{eqnarray*}
for $k=s, \ldots, t-1$. By substituting $k=t-1,\ldots,s$ in turn, one obtains
    \begin{eqnarray*}
    A_s\cdots A_{t-1}u_t
    \geq A_s\cdots A_{t-2}\underline{A}_{t-1}u_t
    \geq \cdots \geq \underline{A}_s \cdots \underline{A}_{t-1} u_t > 0,
    \end{eqnarray*}
i.e., \eqref{dynamical} holds when $t-s=K+1$.
\end{itemize}
According to (i) and (ii), \eqref{dynamical} holds when $t-s=0,\ldots,m-1$. This completes the proof of Lemma \ref{lemma: dynamical system}.
\end{proof}
Lemma \ref{lemma: dynamical system} implies that $B_s\cdots B_{t-1}u_t>0$ when $1\leq s\leq t\leq m$. In particular, $B_k\cdots B_{m-1}u_m>0$ for $k=1,\ldots, m$, and therefore $\Delta S_{k,k-1}>0$ for $k=0,\ldots,m-1$.

Next show that $\Delta S_{k,k-1}>0$ for $k=0,\ldots,m-1$ implies $v'\geq v$. Note that $\Delta S_{ij}=0$ when $(i,j)\notin\hP_{m-1}$ and $\Delta \ell_{ij}=0$ when $(i,j)\notin\hP_m$. It follows from \eqref{OPF v} that
	$$\Delta v_i-\Delta v_j = 2\re(\bar{z}_{ij} \Delta S_{ij}) - |z_{ij}|^2 \Delta \ell_{ij} = 0$$
when $(i,j)\notin\hP_m$. When $(i,j)\in\hP_m$, one has $(i,j)=(k,k-1)$ for some $k\in\{1,\ldots,m\}$, and therefore
	\begin{eqnarray*}
	\Delta v_i-\Delta v_j &=& 2\re(\bar{z}_{k,k-1} \Delta S_{k,k-1}) - |z_{k,k-1}|^2 \Delta \ell_{k,k-1} \\
	&\geq& \re(\bar{z}_{k,k-1} \Delta S_{k,k-1}) - |z_{k,k-1}|^2 \Delta \ell_{k,k-1} \\
	&=& \re(\bar{z}_{k,k-1} (\Delta S_{k,k-1}-z_{k,k-1}\Delta \ell_{k,k-1})) \\
	&=& \re(\bar{z}_{k,k-1} \Delta S_{k-1,k-2}) > 0.
	\end{eqnarray*}
Hence, $\Delta v_i\geq\Delta v_j$ whenever $(i,j)\in\hE$. Add the inequalities over path $\hP_i$ to obtain $\Delta v_i\geq\Delta v_0=0$ for $i\in\hN^+$, i.e., $v'\geq v$. This completes the proof of Claim \ref{claim: C1 tree}.

\section{Proof of Proposition \ref{prop: smaller injections}}\label{app: smaller injections}
Let $\underline{A}$ and $\underline{A}'$ denote the matrices with respect to $(\overline{p},\overline{q})$ and $(\overline{p}',\overline{q}')$ respectively, i.e., denote
	$$\underline{A}_i'=I - \frac{2}{\underline{v}_i} u_i \left(\hat{P}^+_{ij}(\overline{p}') ~\hat{Q}_{ij}^+(\overline{q}')\right)
	\text{ and }
	\underline{A}_i = I - \frac{2}{\underline{v}_i} u_i \left(\hat{P}^+_{ij}(\overline{p}) ~\hat{Q}_{ij}^+(\overline{q})\right)$$
for $(i,j)\in\hE$. When $(\overline{p},\overline{q})\leq(\overline{p}',\overline{q}')$,  one has $\underline{A}_{l_k}-\underline{A}_{l_k}'=u_{l_k}b_{l_k}^T$ where
	$$b_{l_k}=\frac{2}{\underline{v}_{l_k}}\begin{pmatrix}
	\hat{P}_{l_kl_{k-1}}^+(\overline{p}')     -     \hat{P}_{l_kl_{k-1}}^+(\overline{p}) \\
	\hat{Q}_{l_kl_{k-1}}^+(\overline{q}')     -     \hat{Q}_{l_kl_{k-1}}^+(\overline{q})
	\end{pmatrix}\geq0$$
for any $l\in\hL$ and any $k\in\{1\ldots, n_l\}$.

If $\underline{A}_{l_s}'\cdots \underline{A}_{l_{t-1}}'u_{l_t}>0$ for any $l\in\hL$ and any $s,t$ such that $1\leq s\leq t\leq n_l$, then it follows from Lemma \ref{lemma: dynamical system} that $\underline{A}_{l_s}\cdots \underline{A}_{l_{t-1}}u_{l_t}>0$ for any $l\in\hL$ any $s,t$ such that $1\leq s\leq t\leq n_l$. This completes the proof of Proposition \ref{prop: smaller injections}.

\section{Proof of Theorem \ref{thm: unique}}\label{app: thm unique}
Assume that $f_i$ is convex for $i\in \hN$, that $\mathcal{S}_i$ is convex for $i\in\hN^+$, that SOCP-m is exact, and that SOCP-m has at least one solution. Let $\tilde{w}=(\tilde{s},\tilde{S},\tilde{v},\tilde{\ell},\tilde{s}_0)$ and $\hat{w}=(\hat{s},\hat{S},\hat{v},\hat{\ell},\hat{s}_0)$ denote two arbitrary SOCP-m solutions. It suffices to show that $\tilde{w}=\hat{w}$.

Since SOCP-m is exact, $\tilde{v}_i\tilde{\ell}_{ij}=|\tilde{S}_{ij}|^2$ and $\hat{v}_i\hat{\ell}_{ij}=|\hat{S}_{ij}|^2$ for $(i,j)\in\hE$. Define $w\eqdef(\tilde{w}+\hat{w})/2$. Since SOCP-m is convex, $w$ also solves SOCP-m. Hence, $v_i\ell_{ij}=|S_{ij}|^2$ for $(i,j)\in\hE$. Substitute $v_i=(\tilde{v}_i+\hat{v}_i)/2$, $\ell_{ij} = (\tilde{\ell}_{ij} + \hat{\ell}_{ij})/2$, and $S_{ij}=(\tilde{S}_{ij}+\hat{S}_{ij})/2$ to obtain
	\begin{equation*}
	\hat{S}_{ij}\tilde{S}_{ij}^H + \tilde{S}_{ij}\hat{S}_{ij}^H = \hat{v}_i\tilde{\ell}_{ij} + \tilde{v}_i\hat{\ell}_{ij}
	\end{equation*}
for $(i,j)\in\hE$. The right hand side
	\begin{equation*}
	\hat{v}_i\tilde{\ell}_{ij} + \tilde{v}_i\hat{\ell}_{ij}
	=\hat{v}_i\frac{|\tilde{S}_{ij}|^2}{\tilde{v}_i} + \tilde{v}_i\frac{|\hat{S}_{ij}|^2}{\hat{v}_i}
	\geq 2|\tilde{S}_{ij}||\hat{S}_{ij}|,
	\end{equation*}
and the equality is attained if and only if $|\tilde{S}_{ij}|/\tilde{v}_i=|\hat{S}_{ij}|/\hat{v}_i$. The left hand side
	\begin{equation*}
	\hat{S}_{ij}\tilde{S}_{ij}^H + \tilde{S}_{ij}\hat{S}_{ij}^H \leq 2|\tilde{S}_{ij}||\hat{S}_{ij}|,
	\end{equation*}
and the equality is attained if and only if $\angle \hat{S}_{ij}=\angle\tilde{S}_{ij}$. Hence, $\tilde{S}_{ij}/\tilde{v}_i=\hat{S}_{ij}/\hat{v}_i$ for $(i,j)\in\hE$.

Introduce $\hat{v}_0:=\tilde{v}_0:=v_0$ and define $\eta_i:=\hat{v}_i/\tilde{v}_i$ for $i\in\hN$, then $\eta_0=1$ and $\hat{S}_{ij}=\eta_i\tilde{S}_{ij}$ for $(i,j)\in\hE$. Hence,
	$$\hat{\ell}_{ij} = \frac{|\hat{S}_{ij}|^2}{\hat{v}_i} = \frac{|\eta_i\tilde{S}_{ij}|^2}{\eta_i\tilde{v}_i} = \eta_i\frac{|\tilde{S}_{ij}|^2}{\tilde{v}_i} = \eta_i\tilde{\ell}_{ij}$$
and therefore
	$$\eta_j = \frac{\hat{v}_j}{\tilde{v}_j} = \frac{\hat{v}_i-2\re(z_{ij}^H\hat{S}_{ij})+|z_{ij}|^2\hat{\ell}_{ij}}
	{\tilde{v}_i-2\re(z_{ij}^H\tilde{S}_{ij})+|z_{ij}|^2\tilde{\ell}_{ij}}=\eta_i$$
for $(i,j)\in\hE$. Since the network $(\hN,\hE)$ is connected, $\eta_i=\eta_0=1$ for $i\in\hN$. This implies $\hat{w}=\tilde{w}$ and completes the proof of Theorem \ref{thm: unique}.

\section{Proof of Theorem \ref{lemma: weaker}}\label{app: weaker}
Theorem \ref{lemma: weaker} follows from Claims \ref{claim: no reverse}--\ref{claim: pre C1}.

	\begin{claim}\label{claim: no reverse}
	Assume that there exists $\overline{p}_i$ and $\overline{q}_i$ such that $\mathcal{S}_i\subseteq\{s\in\mathbb{C} ~|~ \re(s)\leq\overline{p}_i,~\im(s)\leq\overline{q}_i\}$ for $i\in\hN^+$. If $\hat{P}_{ij}(\overline{p})\leq0$, $\hat{Q}_{ij}(\overline{q})\leq0$ for any $(i,j)\in\hE$ such that $i\notin \hL$, then C1 holds.
	\end{claim}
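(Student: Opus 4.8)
The plan is to show that, under the stated hypothesis, every matrix factor that can appear in a C1 product equals the identity, so that C1 collapses to the trivially true statement $u_{l_t}>0$.

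First I would fix a leaf bus $l\in\hL$ with path $\hP_l=\{l_{n_l}\rightarrow\cdots\rightarrow l_1\rightarrow l_0\}$ and integers $s,t$ with $1\leq s\leq t\leq n_l$, and examine the product $\underline{A}_{l_s}\underline{A}_{l_{s+1}}\cdots\underline{A}_{l_{t-1}}u_{l_t}$. Each matrix factor $\underline{A}_{l_k}$ occurring in it has index $k$ in the range $s\leq k\leq t-1$, hence $1\leq k\leq n_l-1$. For such $k$ we have $l_{k+1}\rightarrow l_k$, so $l_k$ has a child and therefore $l_k\notin\hL$. The hypothesis then applies to the line $(l_k,l_{k-1})\in\hE$, giving $\hat{P}_{l_kl_{k-1}}(\overline{p})\leq 0$ and $\hat{Q}_{l_kl_{k-1}}(\overline{q})\leq 0$; consequently $\hat{P}^+_{l_kl_{k-1}}(\overline{p})=\hat{Q}^+_{l_kl_{k-1}}(\overline{q})=0$, and the definition of $\underline{A}_{l_k}$ forces $\underline{A}_{l_k}=I$.

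Hence $\underline{A}_{l_s}\cdots\underline{A}_{l_{t-1}}=I$ --- including the degenerate case $s=t$, where the product is empty and equals $I$ by convention --- so the C1 product reduces to $u_{l_t}$, whose two entries are $r_{l_tl_{t-1}}$ and $x_{l_tl_{t-1}}$. Both are strictly positive by assumption A3, so $u_{l_t}>0$. Since $l$, $s$, and $t$ were arbitrary, C1 holds, which proves the claim.

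I do not expect a genuine obstacle here: the only point requiring care is the bookkeeping observation that the matrices $\underline{A}_l$ indexed by leaf buses never enter a C1 product, which is exactly why assuming the no-reverse-flow condition only at non-leaf buses is enough --- this makes Claim \ref{claim: no reverse} a mild sharpening of Proposition \ref{pro: no reverse} --- together with the trivial empty-product edge case $s=t$. In particular no appeal to Lemma \ref{lemma: dynamical system} is needed for this part.
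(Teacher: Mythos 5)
Your proof is correct and follows essentially the same route as the paper's: observe that every factor $\underline{A}_{l_k}$ appearing in a C1 product has $1\leq k\leq n_l-1$, hence $l_k\notin\hL$, so the hypothesis forces $\hat{P}^+_{l_kl_{k-1}}(\overline{p})=\hat{Q}^+_{l_kl_{k-1}}(\overline{q})=0$ and $\underline{A}_{l_k}=I$, reducing the product to $u_{l_t}>0$ by A3. Your version merely spells out the bookkeeping (why leaf-indexed matrices never occur, and the empty-product case $s=t$) that the paper leaves implicit.
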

	
	\begin{proof}
	If $\hat{P}_{ij}(\overline{p})\leq0$, $\hat{Q}_{ij}(\overline{q})\leq0$ for any $(i,j)\in\hE$ such that $i\notin \hL$, then $\underline{A}_{l_k}=I$ for any $l\in\hL$ and any $k\in\{1\ldots, n_l-1\}$. It follows that $\underline{A}_{l_s}\cdots\underline{A}_{l_{t-1}}u_{l_t}=u_{l_t}>0$ for any $l\in\hL$ and any $s,t$ such that $1\leq s\leq t\leq n_l$, i.e., C1 holds.
	\end{proof}

	\begin{claim}\label{claim: uniform}
	Assume that there exists $\overline{p}_i$ and $\overline{q}_i$ such that $\mathcal{S}_i\subseteq\{s\in\mathbb{C} ~|~ \re(s)\leq\overline{p}_i,~\im(s)\leq\overline{q}_i\}$ for $i\in\hN^+$. If $r_{ij}/x_{ij}=r_{jk}/x_{jk}$ for any $(i,j),(j,k)\in\hE$, and $\underline{v}_i-2r_{ij}\hat{P}_{ij}^+(\overline{p})-2x_{ij}\hat{Q}_{ij}^+(\overline{q})>0$ for any $(i,j)\in\hE$ such that $i\notin \hL$, then C1 holds.
	\end{claim}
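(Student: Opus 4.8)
The plan is to use the equal-$r/x$-ratio hypothesis to collapse the matrix products appearing in C1 into a single scalar product. First I would fix a leaf bus $l\in\hL$ and indices $s,t$ with $1\le s\le t\le n_l$. The case $s=t$ is immediate: the product in C1 is then empty and the assertion is just $u_{l_t}>0$, which holds by A3. So assume $s<t$. Since $r_{ij}/x_{ij}=r_{jk}/x_{jk}$ for consecutive lines, the ratio $\rho:=r_{l_kl_{k-1}}/x_{l_kl_{k-1}}$ is independent of $k$ along $\hP_l$; setting $\bar u:=(\rho,1)^T>0$ (positive by A3) we may write $u_{l_k}=x_{l_kl_{k-1}}\,\bar u$ for every $k$, i.e.\ all the $u_{l_k}$ on this path point in the common direction $\bar u$.

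The key step I would then make is that $\bar u$ is a shared eigenvector of all the matrices $\underline A_{l_k}$. Each $\underline A_{l_k}=I-\tfrac{2}{\underline v_{l_k}}u_{l_k}\left(\hat{P}^+_{l_kl_{k-1}}(\overline{p}) ~~ \hat{Q}^+_{l_kl_{k-1}}(\overline{q})\right)$ is a rank-one perturbation of $I$ whose range is $\mathrm{span}\{u_{l_k}\}=\mathrm{span}\{\bar u\}$, and a one-line computation gives $\underline A_{l_k}\bar u=(1-\gamma_{l_k})\bar u$ with
\[
\gamma_{l_k}:=\frac{2}{\underline v_{l_k}}\Bigl(r_{l_kl_{k-1}}\hat{P}^+_{l_kl_{k-1}}(\overline{p})+x_{l_kl_{k-1}}\hat{Q}^+_{l_kl_{k-1}}(\overline{q})\Bigr)\ge 0 .
\]
Iterating, $\underline A_{l_s}\cdots\underline A_{l_{t-1}}u_{l_t}=x_{l_tl_{t-1}}\bigl(\prod_{k=s}^{t-1}(1-\gamma_{l_k})\bigr)\bar u$, so, because $x_{l_tl_{t-1}}>0$ and $\bar u>0$, C1 for this triple $(l,s,t)$ is equivalent to $\prod_{k=s}^{t-1}(1-\gamma_{l_k})>0$.

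It then remains to show each factor $1-\gamma_{l_k}$ is positive for $k\in\{s,\dots,t-1\}$. For such $k$ we have $k\le n_l-1$, so $l_k$ has the child $l_{k+1}$ on $\hP_l$ and hence $l_k\notin\hL$; applying the second hypothesis of the claim to the line $(l_k,l_{k-1})$ gives $\underline v_{l_k}-2r_{l_kl_{k-1}}\hat{P}^+_{l_kl_{k-1}}(\overline{p})-2x_{l_kl_{k-1}}\hat{Q}^+_{l_kl_{k-1}}(\overline{q})>0$, which is exactly $1-\gamma_{l_k}>0$. Hence the product is a strictly positive multiple of $\bar u>0$, and C1 holds.

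I expect the only genuine difficulty to be recognizing the parallelism of the $u_{l_k}$ and the resulting shared-eigenvector structure; once that is in place the proof reduces to the one displayed identity and the hypothesis drops straight in. The single bookkeeping point to watch is the index ranges: the matrices occurring in C1 are $\underline A_{l_k}$ with $k\le n_l-1$, all of which have $l_k\notin\hL$ so the hypothesis applies, whereas the endpoint factor $u_{l_t}$ (for $t=n_l$ this is the leaf line) needs only positivity from A3 and not the ratio bound.
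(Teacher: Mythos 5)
Your proposal is correct and follows essentially the same route as the paper: the paper's inductive proof maintains that $(\alpha_s~\beta_s)^T:=\underline{A}_{l_s}\cdots\underline{A}_{l_{t-1}}u_{l_t}$ stays positive with fixed ratio $\alpha_s/\beta_s=r/x$, and its induction step is exactly your eigenvector computation, multiplying by the scalar $\frac{1}{\underline{v}_{l_k}}\bigl(\underline{v}_{l_k}-2r_{l_kl_{k-1}}\hat{P}^+_{l_kl_{k-1}}(\overline{p})-2x_{l_kl_{k-1}}\hat{Q}^+_{l_kl_{k-1}}(\overline{q})\bigr)>0$ at each stage. Your index bookkeeping (only $\underline{A}_{l_k}$ with $k\leq n_l-1$, hence $l_k\notin\hL$, appears in the product) is also consistent with the paper.
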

	
	\begin{proof}
	Assume that $r_{ij}/x_{ij}=r_{jk}/x_{jk}$ for any $(i,j),(j,k)\in\hE$, and that $\underline{v}_i-2r_{ij}\hat{P}_{ij}^+(\overline{p})-2x_{ij}\hat{Q}_{ij}^+(\overline{q})>0$ for any $(i,j)\in\hE$ such that $i\notin \hL$. Fix an arbitrary $l\in\hL$, and assume $l_k=k$ for $k=0,\ldots,n_l$ without loss of generality. Fix an arbitrary $t\in\{1,\ldots,n_l\}$, and define $(\alpha_s ~ \beta_s)^T:=\underline{A}_s\cdots \underline{A}_{t-1}u_t$ for $s=1,\ldots,t$. Then it suffices to prove that $\alpha_s>0$ and $\beta_s>0$ for $s=1,\ldots, t$. In particular, we prove
	\begin{equation}\label{H1}
	\alpha_s>0, ~ \beta_s>0, ~ \alpha_s/\beta_s=r_{10}/x_{10}
	\end{equation}
inductively for $s=t,t-1,\ldots,1$. Define $\eta:=r_{10}/x_{10}$ and note that $r_{ij}/x_{ij}=\eta$ for $(i,j)\in \hE$.
	\begin{itemize}
	\item[i)] When $s=t$, one has $\alpha_s=r_{t,t-1}$, $\beta_s=x_{t,t-1}$, and $\alpha_s/\beta_s=\eta$. Therefore \eqref{H1} holds.
	\item[ii)] Assume that \eqref{H1} holds for $s=k$ ($2\leq k\leq t$), then $(\alpha_k ~\beta_k)^T=cu_{k-1}$ for some $c\in\mathbb{R}$. It follows that
	\begin{eqnarray*}
	\begin{pmatrix}	\alpha_{k-1}\\ \beta_{k-1}	\end{pmatrix}
	&=& \left[I-   \frac{2}{\underline{v}_{k-1}}  u_{k-1}\begin{pmatrix}
	\hat{P}_{k-1,k-2}^+(\overline{p}) & \hat{Q}_{k-1,k-2}^+(\overline{q})
	\end{pmatrix}  \right]
	\begin{pmatrix} \alpha_k \\ \beta_k \end{pmatrix}\\
	&=& \left(1- \frac{2}{\underline{v}_{k-1}}
	\begin{pmatrix}
	\hat{P}_{k-1,k-2}^+(\overline{p}) & \hat{Q}_{k-1,k-2}^+(\overline{q})
	\end{pmatrix} u_{k-1}\right) \begin{pmatrix} \alpha_k \\ \beta_k \end{pmatrix} \\
	&=& \frac{1}{\underline{v}_{k-1}} \left(\underline{v}_{k-1}-
	2 r_{k-1,k-2}\hat{P}_{k-1,k-2}^+(\overline{p}) - 2x_{k-1,k-2}\hat{Q}_{k-1,k-2}^+(\overline{q})       \right)
	\begin{pmatrix} \alpha_k \\ \beta_k \end{pmatrix} > 0
	\end{eqnarray*}
and $\alpha_{k-1}/\beta_{k-1}=\alpha_k/\beta_k=\eta$. Hence, \eqref{H1} holds for $s=k-1$.
	\end{itemize}
According to (i) and (ii), \eqref{H1} holds for $s=t,t-1\ldots,1$. This completes the proof of Claim \ref{claim: uniform}.
	\end{proof}
	
	\begin{claim}\label{claim: P}
	Assume that there exists $\overline{p}_i$ and $\overline{q}_i$ such that $\mathcal{S}_i\subseteq\{s\in\mathbb{C} ~|~ \re(s)\leq\overline{p}_i,~\im(s)\leq\overline{q}_i\}$ for $i\in\hN^+$. If $r_{ij}/x_{ij}\geq r_{jk}/x_{jk}$ for any $(i,j),(j,k)\in\hE$, and $\hat{P}_{ij}(\overline{p})\leq0$, $\underline{v}_i-2x_{ij}\hat{Q}_{ij}^+(\overline{q})>0$ for any $(i,j)\in\hE$ such that $i\notin \hL$, then C1 holds.
	\end{claim}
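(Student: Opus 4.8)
The plan is to mimic the proof of Claim~\ref{claim: uniform}: fix an arbitrary leaf bus $l\in\hL$, assume without loss of generality that $l_k=k$ for $k=0,\ldots,n_l$, fix $t\in\{1,\ldots,n_l\}$, and set $(\alpha_s~~\beta_s)^T:=\underline{A}_s\cdots\underline{A}_{t-1}u_t$ for $s=1,\ldots,t$. It then suffices to show $\alpha_s>0$ and $\beta_s>0$ for every such $s$, since letting $t$ and $l$ range over $\{1,\ldots,n_l\}$ and $\hL$ recovers all the inequalities that constitute C1. Note $(\alpha_t~~\beta_t)^T=u_t=(r_{t,t-1}~~x_{t,t-1})^T>0$ by assumption A3, so the point is to propagate positivity downward from $s=t$ to $s=1$.

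First I would exploit the hypothesis that $\hat{P}_{ij}(\overline p)\le 0$ on interior buses. For $k\in\{1,\ldots,t-1\}$ the bus $k=l_k$ has the downstream neighbor $l_{k+1}$, hence $k\notin\hL$, so $\hat{P}^+_{k,k-1}(\overline p)=0$ and
$\underline{A}_k=I-\tfrac{2}{\underline v_k}u_k\bigl(0~~\hat{Q}^+_{k,k-1}(\overline q)\bigr)$
is upper triangular, with diagonal entries $1$ and $1-\tfrac{2x_{k,k-1}\hat{Q}^+_{k,k-1}(\overline q)}{\underline v_k}$; the latter is strictly positive because $\underline v_k-2x_{k,k-1}\hat{Q}^+_{k,k-1}(\overline q)>0$ by the second hypothesis.

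The crux is choosing the right inductive invariant. The naive statement ``$\alpha_s>0$'' will not propagate, since $\alpha_{k-1}=\alpha_k-\tfrac{2r_{k-1,k-2}\hat{Q}^+_{k-1,k-2}(\overline q)}{\underline v_{k-1}}\beta_k$ subtracts a nonnegative multiple of $\beta_k$; one needs $\alpha_k$ to exceed not merely $0$ but $\tfrac{r_{k-1,k-2}}{x_{k-1,k-2}}\beta_k$. So I would instead prove, by downward induction on $s$, that $\beta_s>0$ and $\alpha_s\ge\tfrac{r_{s,s-1}}{x_{s,s-1}}\beta_s$. The base case $s=t$ holds with equality. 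For the step (from $k$ to $k-1$, $2\le k\le t$): from $\beta_k>0$ and triangularity, $\beta_{k-1}=\bigl(1-\tfrac{2x_{k-1,k-2}\hat{Q}^+_{k-1,k-2}(\overline q)}{\underline v_{k-1}}\bigr)\beta_k>0$; and using the ratio monotonicity $\tfrac{r_{k,k-1}}{x_{k,k-1}}\ge\tfrac{r_{k-1,k-2}}{x_{k-1,k-2}}$ together with the inductive bound $\alpha_k\ge\tfrac{r_{k,k-1}}{x_{k,k-1}}\beta_k\ge\tfrac{r_{k-1,k-2}}{x_{k-1,k-2}}\beta_k$, a one-line computation gives $\alpha_{k-1}\ge\tfrac{r_{k-1,k-2}}{x_{k-1,k-2}}\beta_{k-1}$, which is positive since $r_{k-1,k-2},x_{k-1,k-2}>0$. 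This closes the induction and hence establishes C1.

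I expect the $r/x$-monotonicity hypothesis to be exactly where the argument has teeth: it is the ingredient that keeps the slack $\alpha_k-\tfrac{r_{k-1,k-2}}{x_{k-1,k-2}}\beta_k$ nonnegative as $s$ decreases, which is why the triangular structure alone does not suffice. Case~(iv) of Theorem~\ref{lemma: weaker} should follow by the same argument after interchanging the roles of $(r,\hat{P},\overline p)$ and $(x,\hat{Q},\overline q)$, i.e.\ by working with the transposed (lower-triangular) $\underline{A}_k$ and tracking the reciprocal ratio $x_{s,s-1}/r_{s,s-1}$ instead.
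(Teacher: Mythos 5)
Your proof is correct and follows essentially the same route as the paper's: exploit $\hat{P}^+_{k,k-1}(\overline p)=0$ to make $\underline{A}_k$ upper triangular with positive diagonal, and run a downward induction that tracks a lower bound on $\alpha_s/\beta_s$ using the $r/x$ monotonicity. The only (immaterial) difference is your inductive invariant $\alpha_s/\beta_s\ge r_{s,s-1}/x_{s,s-1}$ versus the paper's slightly stronger $\alpha_s/\beta_s\ge r_{t,t-1}/x_{t,t-1}$; both propagate and both yield positivity.
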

	
	\begin{proof}
	Assume that $r_{ij}/x_{ij}\geq r_{jk}/x_{jk}$ for any $(i,j),(j,k)\in\hE$, and that $\hat{P}_{ij}(\overline{p})\leq0$, $\underline{v}_i-2x_{ij}\hat{Q}_{ij}^+(\overline{q})>0$ for any $(i,j)\in\hE$ such that $i\notin \hL$. Fix an arbitrary $l\in\hL$, and assume $l_k=k$ for $k=0,\ldots,n_l$ without loss of generality. Fix an arbitrary $t\in\{1,\ldots,n_l\}$, and define $(\alpha_s ~ \beta_s)^T:=\underline{A}_s\cdots \underline{A}_{t-1}u_t$ for $s=1,\ldots,t$. Then it suffices to prove that $\alpha_s>0$ and $\beta_s>0$ for $s=1,\ldots, t$. In particular, we prove
	\begin{equation}\label{H2}
	\alpha_s>0, ~ \beta_s>0, ~ \alpha_s/\beta_s\geq r_{t,t-1}/x_{t,t-1}
	\end{equation}
inductively for $s=t,t-1,\ldots,1$.
	\begin{itemize}
	\item[i)] When $s=t$, one has $\alpha_s=r_{t,t-1}$, $\beta_s=x_{t,t-1}$, and $\alpha_s/\beta_s=r_{t,t-1}/x_{t,t-1}$. Therefore \eqref{H2} holds.
	\item[ii)] Assume that \eqref{H2} holds for $s=k$ ($2\leq k\leq t$). Noting that $\hat{P}_{k-1,k-2}^+(\overline{p})=0$, one has
	\begin{eqnarray*}
	\begin{pmatrix}	\alpha_{k-1}\\ \beta_{k-1}	\end{pmatrix}
	&=& \left[I-   \frac{2}{\underline{v}_{k-1}}  u_{k-1}\begin{pmatrix}
	\hat{P}_{k-1,k-2}^+(\overline{p}) & \hat{Q}_{k-1,k-2}^+(\overline{q})
	\end{pmatrix}  \right]
	\begin{pmatrix} \alpha_k \\ \beta_k \end{pmatrix}\\
	&=& \begin{pmatrix} \alpha_k \\ \beta_k \end{pmatrix} - \frac{2}{\underline{v}_{k-1}}  u_{k-1}
	\hat{Q}_{k-1,k-2}^+(\overline{q})    \beta_k.
	\end{eqnarray*}
Hence, $\beta_{k-1} = \frac{1}{\underline{v}_{k-1}}\left(\underline{v}_{k-1}-2x_{k-1,k-2}\hat{Q}_{k-1,k-2}^+(\overline{q})\right) \beta_k>0$. Then,
	\begin{eqnarray*}
	\alpha_{k-1} &=& \alpha_k - \frac{2r_{k-1,k-2}\hat{Q}_{k-1,k-2}^+(\overline{q})}{\underline{v}_{k-1}}\beta_k\\
	&\geq& \left(\frac{r_{t,t-1}}{x_{t,t-1}}- \frac{2r_{k-1,k-2}\hat{Q}_{k-1,k-2}^+(\overline{q})}{\underline{v}_{k-1}}\right)\beta_k \\
    &\geq& \frac{r_{t,t-1}}{x_{t,t-1}} \left(1- \frac{2x_{k-1,k-2}\hat{Q}_{k-1,k-2}^+(\overline{q})}{\underline{v}_{k-1}}\right)\beta_k
	~=~ \frac{r_{t,t-1}}{x_{t,t-1}}\beta_{k-1} > 0.
	\end{eqnarray*}
The second inequality is due to $r_{k-1,k-2}/x_{k-1,k-2}\leq r_{t,t-1}/x_{t,t-1}$. Hence, \eqref{H2} holds for $s=k-1$.
	\end{itemize}
	According to (i) and (ii), \eqref{H2} holds for $s=t,t-1,\ldots,1$. This completes the proof of Claim \ref{claim: P}.
	\end{proof}	

	\begin{claim}\label{claim: Q}
	Assume that there exists $\overline{p}_i$ and $\overline{q}_i$ such that $\mathcal{S}_i\subseteq\{s\in\mathbb{C} ~|~ \re(s)\leq\overline{p}_i,~\im(s)\leq\overline{q}_i\}$ for $i\in\hN^+$. If $r_{ij}/x_{ij}\leq r_{jk}/x_{jk}$ for any $(i,j),(j,k)\in\hE$, and $\hat{Q}_{ij}(\overline{q})\leq0$, $\underline{v}_i-2r_{ij}\hat{P}_{ij}^+(\overline{p})>0$ for any $(i,j)\in\hE$ such that $i\notin \hL$, then C1 holds.
	\end{claim}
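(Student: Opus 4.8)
The plan is to transcribe the argument of Claim~\ref{claim: P} under the exchange $P\leftrightarrow Q$, $r\leftrightarrow x$, which also reverses the relevant $r/x$ monotonicity. Fix an arbitrary leaf $l\in\hL$ and relabel $l_k=k$ for $k=0,\ldots,n_l$ without loss of generality; fix an arbitrary $t\in\{1,\ldots,n_l\}$ and set $(\alpha_s~\beta_s)^T:=\underline{A}_s\cdots\underline{A}_{t-1}u_t$ for $s=1,\ldots,t$. Since $l$ and $t$ are arbitrary, C1 follows once we show $\alpha_s>0$ and $\beta_s>0$ for every $s\in\{1,\ldots,t\}$. I would prove the stronger statement
\[
\alpha_s>0,\qquad \beta_s>0,\qquad \frac{\alpha_s}{\beta_s}\le\frac{r_{t,t-1}}{x_{t,t-1}}
\]
by downward induction on $s$ from $s=t$ to $s=1$.

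The base case $s=t$ is immediate, since $(\alpha_t~\beta_t)^T=u_t=(r_{t,t-1}~x_{t,t-1})^T$. For the inductive step, the hypothesis $\hat{Q}_{k-1,k-2}(\overline{q})\le0$ forces $\hat{Q}_{k-1,k-2}^+(\overline{q})=0$, so $\underline{A}_{k-1}=I-\frac{2}{\underline{v}_{k-1}}u_{k-1}\,(\hat{P}_{k-1,k-2}^+(\overline{p})~~0)$ and hence
\[
\begin{pmatrix}\alpha_{k-1}\\ \beta_{k-1}\end{pmatrix}
=\begin{pmatrix}\alpha_k\\ \beta_k\end{pmatrix}
-\frac{2\hat{P}_{k-1,k-2}^+(\overline{p})}{\underline{v}_{k-1}}\,\alpha_k\,u_{k-1}.
\]
The first row gives $\alpha_{k-1}=\frac{1}{\underline{v}_{k-1}}\bigl(\underline{v}_{k-1}-2r_{k-1,k-2}\hat{P}_{k-1,k-2}^+(\overline{p})\bigr)\alpha_k>0$, using the hypothesis $\underline{v}_i-2r_{ij}\hat{P}_{ij}^+(\overline{p})>0$ and the inductive positivity of $\alpha_k$. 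For $\beta_{k-1}$ I would substitute the inductive bound $\beta_k\ge\frac{x_{t,t-1}}{r_{t,t-1}}\alpha_k$ into $\beta_{k-1}=\beta_k-\frac{2x_{k-1,k-2}\hat{P}_{k-1,k-2}^+(\overline{p})}{\underline{v}_{k-1}}\alpha_k$ and compare with $\frac{x_{t,t-1}}{r_{t,t-1}}\alpha_{k-1}$; after cancelling the common positive factor $\alpha_k$, the desired $\beta_{k-1}\ge\frac{x_{t,t-1}}{r_{t,t-1}}\alpha_{k-1}>0$ (which simultaneously delivers $\beta_{k-1}>0$ and the ratio invariant) reduces to $r_{k-1,k-2}/x_{k-1,k-2}\ge r_{t,t-1}/x_{t,t-1}$. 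This holds because $k-1<t$ and the hypothesis $r_{ij}/x_{ij}\le r_{jk}/x_{jk}$ makes $r/x$ nondecreasing along the path from the leaf toward bus $0$, so transitivity over the sub-path $(t,t-1),(t-1,t-2),\ldots,(k-1,k-2)$ yields the inequality.

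Completing the induction gives $\alpha_s,\beta_s>0$ for $s=1,\ldots,t$, and ranging over all $l\in\hL$ and $t\le n_l$ shows $\underline{A}_{l_s}\cdots\underline{A}_{l_{t-1}}u_{l_t}>0$ whenever $1\le s\le t\le n_l$, i.e.\ C1 holds. The only place needing a touch of care --- the ``main obstacle'' --- is the sign bookkeeping in the reduction above when one divides through by $\hat{P}_{k-1,k-2}^+(\overline{p})$: when $\hat{P}_{k-1,k-2}^+(\overline{p})=0$ the inequality is trivial, and when it is strictly positive one divides by a positive quantity, so no inequality flips; as in the proof of Claim~\ref{claim: P}, keeping the manipulation at the level of the displayed identities rather than cross-multiplying makes this transparent. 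Everything else is a routine mirror of the Claim~\ref{claim: P} computation.
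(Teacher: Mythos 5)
Your proposal is correct and is precisely the argument the paper intends: the paper's own proof of this claim is simply the remark that it is ``similar to that of Claim~\ref{claim: P}, and omitted for brevity,'' and you have carried out exactly that mirror argument, with the invariant correctly reversed to $\alpha_s/\beta_s\le r_{t,t-1}/x_{t,t-1}$ and the correct use of the reversed $r/x$ monotonicity to get $r_{k-1,k-2}/x_{k-1,k-2}\ge r_{t,t-1}/x_{t,t-1}$ for the upstream line. The computation checks out, including the handling of the $\hat{P}_{k-1,k-2}^+(\overline{p})=0$ case.
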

	
	\begin{proof}
	The proof of Claim \ref{claim: Q} is similar to that of Claim \ref{claim: P}, and omitted for brevity.
	\end{proof}
	
	\begin{claim}\label{claim: pre C1}
	Assume that there exists $\overline{p}_i$ and $\overline{q}_i$ such that $\mathcal{S}_i\subseteq\{s\in\mathbb{C} ~|~ \re(s)\leq\overline{p}_i,~\im(s)\leq\overline{q}_i\}$ for $i\in\hN^+$. If
	\begin{equation}\label{pre C1}
	\begin{pmatrix}
	\displaystyle \prod_{(k,l)\in \hP_j}\left( 1-\frac{2r_{kl}\hat{P}_{kl}^+(\overline{p})}{\underline{v}_k} \right) &
	\displaystyle -\sum_{(k,l)\in \hP_j}\frac{2r_{kl}\hat{Q}_{kl}^+(\overline{q})}{\underline{v}_k} \\
	\displaystyle -\sum_{(k,l)\in \hP_j}\frac{2x_{kl}\hat{P}_{kl}^+(\overline{p})}{\underline{v}_k} &
	\displaystyle \prod_{(k,l)\in \hP_j}\left( 1-\frac{2x_{kl}\hat{Q}_{kl}^+(\overline{q})}{\underline{v}_k} \right)
	\end{pmatrix}
	\begin{pmatrix}
	r_{ij} \\ x_{ij}
	\end{pmatrix} >0
	\end{equation}
for $(i,j)\in\hE$, then C1 holds.
	\end{claim}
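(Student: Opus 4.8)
\emph{Setup.} Fix a leaf $l\in\hL$ and (as in the other proofs) take $l_k=k$, so the line $(k,k-1)$ carries the matrix $\underline{A}_{l_k}$. For $1\le s\le t\le n_l$ set $v^{(s)}:=\underline{A}_{l_s}\cdots\underline{A}_{l_{t-1}}u_{l_t}\in\mathbb{R}^2$, so that $v^{(t)}=u_{l_t}$ and $v^{(s)}=\underline{A}_{l_s}v^{(s+1)}$; here $\underline{A}_{l_s}=I-u_{l_s}b_s^T$ with $b_s:=\tfrac{2}{\underline{v}_{l_s}}\bigl(\hat{P}^+_{l_sl_{s-1}}(\overline{p}),\hat{Q}^+_{l_sl_{s-1}}(\overline{q})\bigr)^T\ge0$. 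Denote by $L^{[s,t-1]}$ the $2\times2$ matrix in \eqref{pre C1} with the path $\hP_j$ replaced by the sub-path made of lines $s,s+1,\dots,t-1$; then $L^{[1,t-1]}u_{l_t}$ is exactly the left side of \eqref{pre C1} evaluated at the line $(l_t,l_{t-1})$. The plan is to prove $v^{(s)}\ge L^{[s,t-1]}u_{l_t}>0$ for all admissible $s,t$ and all $l\in\hL$, which is precisely C1.

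\emph{Steps 1--2: positivity of the diagonal factors, and \eqref{pre C1} along every sub-path.} First I would extract from \eqref{pre C1} that the diagonal factors are positive: taking the first component of \eqref{pre C1} at a line $(i,j)\in\hE$ and using $r_{ij},x_{ij}>0$ together with the nonnegativity of $\hat{P}^+$ and $\hat{Q}^+$ gives $\prod_{(k,l)\in\hP_j}\bigl(1-\tfrac{2r_{kl}\hat{P}^+_{kl}(\overline{p})}{\underline{v}_k}\bigr)>0$, and the second component gives the analogue for the factors $1-\tfrac{2x_{kl}\hat{Q}^+_{kl}(\overline{q})}{\underline{v}_k}$; a straightforward induction on the depth of $j$ (peeling off the factor attached to the line incident to $j$, the base case being a one-line path) then forces each individual factor to be positive, so $\underline{A}_{l_k}$ has positive diagonal for every non-leaf line $(l_k,l_{k-1})$ --- in particular for every line used as an $\underline{A}$-factor in C1. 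With this in hand, since each factor $1-\tfrac{2r_{kl}\hat{P}^+}{\underline{v}_k}$ then lies in $(0,1]$ and the off-diagonal summands of the matrix in \eqref{pre C1} are $\ge0$, deleting the lines $1,\dots,s-1$ from a root-to-$j$ path only increases both of its rows entrywise; multiplying by $u_{l_t}>0$ gives $L^{[s,t-1]}u_{l_t}\ge L^{[1,t-1]}u_{l_t}>0$ for all $1\le s\le t\le n_l$.

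\emph{Step 3: the induction.} The core step is to show $0<L^{[s,t-1]}u_{l_t}\le v^{(s)}\le u_{l_t}$ by induction on $t-s$; $t-s=0$ is trivial since $L^{[t,t-1]}=I$ and $u_{l_t}>0$. In the inductive step the upper bound is immediate: $v^{(s)}=v^{(s+1)}-u_{l_s}(b_s^Tv^{(s+1)})\le v^{(s+1)}\le u_{l_t}$, because $b_s\ge0$ and $v^{(s+1)}>0$. For the lower bound, write $v^{(s)}=(p_s^\star,q_s^\star)$ and $L^{[s,t-1]}u_{l_t}=(p_s,q_s)$, and put $\rho_s:=\tfrac{2r_{s,s-1}\hat{P}^+_{s,s-1}(\overline{p})}{\underline{v}_s}$, $\tau_s:=\tfrac{2r_{s,s-1}\hat{Q}^+_{s,s-1}(\overline{q})}{\underline{v}_s}$; then $p_s^\star=(1-\rho_s)p_{s+1}^\star-\tau_sq_{s+1}^\star$, whereas a short computation from the definition of $L^{[s,t-1]}$ gives $p_s=(1-\rho_s)p_{s+1}-\tau_s x-\rho_s\bigl(\sum_{k>s}\tau_k\bigr)x$ with $x$ the second entry of $u_{l_t}$ (and the second components satisfy the mirror-image relations, with the first entry of $u_{l_t}$). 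Using $1-\rho_s\ge0$ (Step 1), the inductive lower bound $p_{s+1}^\star\ge p_{s+1}$, and --- crucially --- the inductive \emph{upper} bound $q_{s+1}^\star\le x$, one obtains $p_s^\star\ge(1-\rho_s)p_{s+1}-\tau_sx\ge p_s$, and similarly $q_s^\star\ge q_s$; combined with $L^{[s,t-1]}u_{l_t}>0$ (Step 2) this closes the induction, and letting $l$ range over $\hL$ yields C1.

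\emph{Main obstacle.} The difficulty is that the clean matrix inequality $\underline{A}_{l_s}\cdots\underline{A}_{l_{t-1}}\ge L^{[s,t-1]}$ is \emph{false} in general --- the off-diagonal entries of the honest product can be strictly more negative because of sign-alternating cross terms --- so one cannot avoid working at the vector level and must propagate the auxiliary bound $v^{(s)}\le u_{l_t}$ through the induction precisely in order to control the term $\tau_sq_{s+1}^\star$ appearing in the exact recursion. What makes $L^{[s,t-1]}u_{l_t}$ a strong enough lower bound, and not merely the useless first-order bound $u_{l_t}-\sum_k(I-\underline{A}_{l_k})u_{l_t}$, is the Weierstrass inequality $\prod_k(1-\rho_k)\ge1-\sum_k\rho_k$, valid exactly because $\rho_k\in[0,1]$ by Step 1.
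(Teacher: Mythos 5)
Your proof is correct, and it shares the two pivotal observations of the paper's argument: first, that positivity of each individual diagonal factor $1-2r_{kl}\hat{P}^+_{kl}(\overline{p})/\underline{v}_k$ and $1-2x_{kl}\hat{Q}^+_{kl}(\overline{q})/\underline{v}_k$ can be peeled out of the hypothesis \eqref{pre C1} line by line along the path; and second, that the ``aggregated'' matrix (products on the diagonal, negated sums off the diagonal) serves as a lower bound for the true product $\underline{A}_{l_s}\cdots\underline{A}_{l_{t-1}}$ once it is applied to $u_{l_t}$. Where you genuinely diverge is in the inductive mechanism. The paper isolates the second fact as a standalone result (Lemma \ref{lemma: matrix multiplication}) proved by induction on the number of matrices: the key move is an entrywise comparison of the product of the \emph{first two} matrices against a single merged matrix, followed by right-multiplication by an already-certified positive vector, which is exactly how the paper evades the failure of the global entrywise inequality that you flag in your ``main obstacle'' paragraph. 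Your alternative is a single downward induction along the path carrying the two-sided sandwich $L^{[s,t-1]}u_{l_t}\le v^{(s)}\le u_{l_t}$; the upper bound is the explicit auxiliary invariant that controls the cross term $\tau_s q^\star_{s+1}$, and it correctly exploits the fact that the lower bound on the first component requires the \emph{upper} bound on the second component of $v^{(s+1)}$ (and vice versa), so the hypothesis must indeed be the full vector sandwich. What your route buys is a more transparent component-level recursion and the slightly stronger quantitative conclusion $v^{(s)}\ge L^{[s,t-1]}u_{l_t}>0$ rather than bare positivity; what the paper's route buys is a reusable lemma (it is also invoked in the proofs of Claim \ref{claim: C1 tree} and Proposition \ref{prop: smaller injections}, where the perturbations $u_kb_k^T$ are not of the special diagonal-product form, so your sandwich as written would not directly substitute for it there). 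Your Step 2 monotonicity observation, that deleting lines from the path only helps, also appears implicitly in the paper's inductive branch for truncated sequences. I find no gap.
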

The following lemma is used in the proof of Claim \ref{claim: pre C1}.

\begin{lemma}\label{lemma: matrix multiplication}
	Given $i\geq1$; $c$, $d$, $e$, $f\in\mathbb{R}^i$ such that $0<c\leq1$, $d\geq0$, $e\geq0$, and $0<f\leq1$ componentwise; and $u\in\mathbb{R}^2$ that satisfies $u>0$. If
	\begin{equation}\label{lemma 7 condition}
	\begin{pmatrix}
	\displaystyle \prod_{j=1}^i c_j & \displaystyle -\sum_{j=1}^i d_j\\
	\displaystyle -\sum_{j=1}^i e_j & \displaystyle \prod_{j=1}^i f_j
	\end{pmatrix}
	u>0,
	\end{equation}
then
    \begin{equation}\label{lemma 7 result}
    \begin{pmatrix}
	c_j & -d_j\\
	-e_j & f_j
	\end{pmatrix}\cdots
    \begin{pmatrix}
	c_i & -d_i\\
	-e_i & f_i
	\end{pmatrix} u > 0
    \end{equation}
for $j=1,\ldots,i$.
	\end{lemma}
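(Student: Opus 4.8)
The plan is to strip away the sign pattern, reduce the claim to two scalar comparisons for the ``angle'' $u_1/u_2$, and then control the entries of the partial products by a downward induction. Write $M_k:=\begin{pmatrix} c_k & -d_k \\ -e_k & f_k\end{pmatrix}$ and $|M_k|:=\begin{pmatrix} c_k & d_k \\ e_k & f_k\end{pmatrix}\ge 0$. Since $M_k=D\,|M_k|\,D$ with $D:=\diag(1,-1)$ and $D^2=I$, the partial product in \eqref{lemma 7 result} equals $D W_j D u$, where $W_j:=|M_j||M_{j+1}|\cdots|M_i|=:\begin{pmatrix} a_j & b_j \\ g_j & h_j\end{pmatrix}$ has nonnegative entries (with $W_{i+1}:=I$). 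Hence $D W_j D u=(a_j u_1-b_j u_2,\ h_j u_2-g_j u_1)^{T}$, so \eqref{lemma 7 result} holds for a given $j$ if and only if $b_j u_2<a_j u_1$ and $g_j u_1<h_j u_2$; and \eqref{lemma 7 condition} says exactly that $u_1/u_2>\big(\sum_{k=1}^i d_k\big)\big/\big(\prod_{k=1}^i c_k\big)$ and $u_1/u_2<\big(\prod_{k=1}^i f_k\big)\big/\big(\sum_{k=1}^i e_k\big)$ (the degenerate cases $\sum d_k=0$ or $\sum e_k=0$, forcing $b_j=0$ or $g_j=0$, being disposed of directly).

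Put $\pi_j:=\prod_{k=j}^i c_k$, $\sigma_j:=\sum_{k=j}^i d_k$, $\rho_j:=\prod_{k=j}^i f_k$, $\tau_j:=\sum_{k=j}^i e_k$. From the recursion $W_j=|M_j|W_{j+1}$, i.e. $a_j=c_j a_{j+1}+d_j g_{j+1}$, $b_j=c_j b_{j+1}+d_j h_{j+1}$, $g_j=e_j a_{j+1}+f_j g_{j+1}$, $h_j=e_j b_{j+1}+f_j h_{j+1}$, I would prove by downward induction on $j$ (base $j=i+1$, where $W_{i+1}=I$ makes everything an equality) the inequalities
\[
\text{(A)}\ a_j\ge\pi_j>0,\qquad \text{(C)}\ b_j\pi_j\le a_j\sigma_j,\qquad \text{(E)}\ h_j\pi_j\le a_j+g_j\sigma_j,
\]
together with their mirror images (B) $h_j\ge\rho_j>0$, (D) $g_j\rho_j\le h_j\tau_j$, (F) $a_j\rho_j\le h_j+b_j\tau_j$, which come for free from the invariance of the recursion and of the hypotheses under the swap $(c,d,e,f,a,b,g,h)\mapsto(f,e,d,c,h,g,b,a)$. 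The point is that (C) and (E) must be carried together: substituting (C)$_{j+1}$ and (E)$_{j+1}$ into the expansions of $b_j\pi_j$ and $h_j\pi_j$ and using only $c_j,f_j\le 1$ and nonnegativity, each term on the left is absorbed into a distinct term on the right, the nonnegative surplus being $d_j^2 g_{j+1}$ for (C) and $d_j\big(g_{j+1}+e_j a_{j+1}+f_j g_{j+1}\big)$ for (E). The inequalities (A), (B) are only needed to ensure $a_j,h_j>0$.

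To finish, (C) and (A) give $b_j/a_j\le\sigma_j/\pi_j$, and $\sigma_j/\pi_j\le\sigma_1/\pi_1$ because $\sigma_j\prod_{k=1}^{j-1}c_k\le\sigma_j\le\sigma_1$ with $\prod_{k=1}^{j-1}c_k\le 1$; combined with the first reformulated hypothesis this yields $b_j u_2\le(\sigma_1/\pi_1)\,a_j u_2<a_j u_1$. Symmetrically, (D), (B) and the second hypothesis give $g_j u_1<h_j u_2$. Hence \eqref{lemma 7 result} holds for every $j=1,\dots,i$, which is the claim of Lemma \ref{lemma: matrix multiplication}.

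The step I expect to be the real obstacle is the coupled induction (C)--(E). The naive hope that $b_j\le\sum_{k=j}^i d_k$ entrywise (which would trivialize the argument, as then $N_j\ge E_j$ entrywise) is \emph{false}: the product $|M_j|\cdots|M_i|$ generates cross terms such as $d_j e_{j+1}d_{j+2}$ with no a-priori bound by $\sum_{k=j}^i d_k$. What does survive is the bound on the \emph{ratio} $b_j/a_j$, and closing that induction forces one to track the auxiliary quantity $h_j\pi_j$ via inequality (E); recognizing this extra invariant (and its mirror (F)) is the technical heart of the proof.
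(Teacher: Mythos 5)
Your proof is correct, but it takes a genuinely different route from the paper's. The paper proves Lemma \ref{lemma: matrix multiplication} by forward induction on $i$, the number of factors: for $i=K+1$ it applies the induction hypothesis twice, once to the truncated data $(c_2,\ldots,c_{K+1})$ etc.\ to handle $j\geq 2$, and once to the \emph{merged} data $(c_1c_2,c_3,\ldots)$, $(d_1+d_2,d_3,\ldots)$ etc.\ to handle $j=1$, finishing with the entrywise comparison $\bigl(\begin{smallmatrix}c_1&-d_1\\-e_1&f_1\end{smallmatrix}\bigr)\bigl(\begin{smallmatrix}c_2&-d_2\\-e_2&f_2\end{smallmatrix}\bigr)v_2'\geq\bigl(\begin{smallmatrix}c_1c_2&-d_1-d_2\\-e_1-e_2&f_1f_2\end{smallmatrix}\bigr)v_2'$ valid because $v_2'>0$ and $c_1,f_2\leq1$. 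You instead conjugate by $\diag(1,-1)$ to reduce to the nonnegative partial products $W_j$, reformulate both the hypothesis and the conclusion as bounds on the ratio $u_1/u_2$, and run a downward induction on $j$ carrying the coupled invariants (A)--(F); I checked the two surplus computations ($d_j^2g_{j+1}$ for (C) and $d_j(g_{j+1}+e_ja_{j+1}+f_jg_{j+1})$ for (E)), the mirror symmetry, the base case $W_{i+1}=I$, and the monotonicity $\sigma_j/\pi_j\leq\sigma_1/\pi_1$, and all are sound. What the paper's argument buys is brevity: the merging trick lets one inequality do all the work and no auxiliary quantities need to be named. What yours buys is more explicit information: the bounds $b_j\pi_j\leq a_j\sigma_j$ and $a_j\geq\pi_j$ quantify \emph{how} positive the partial products are, and the reduction to nonnegative matrices makes transparent that the whole lemma is a statement about the off-diagonal-to-diagonal ratios of $W_j$ being dominated by $\sigma_1/\pi_1$ and $\tau_1/\rho_1$. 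Your closing remark is also apt: the naive entrywise bound $b_j\leq\sigma_j$ is indeed false in general, and identifying the auxiliary invariant (E) (which the paper sidesteps entirely via the merge) is the nontrivial step in your route.
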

	
	\begin{proof}
Lemma \ref{lemma: matrix multiplication} can be proved by mathematical induction on $i$.
	\begin{itemize}
	\item[i)] When $i=1$, Lemma \ref{lemma: matrix multiplication} is trivial.
	\item[ii)] Assume that Lemma \ref{lemma: matrix multiplication} holds for $i=K$ ($K\geq1$). When $i=K+1$, if
	\begin{eqnarray*}
	\begin{pmatrix}
	\displaystyle \prod_{j=1}^{i}c_j & \displaystyle -\sum_{j=1}^{i}d_j\\
	\displaystyle -\sum_{j=1}^{i}e_j & \displaystyle \prod_{j=1}^{i}f_j
	\end{pmatrix}
	u > 0,
	\end{eqnarray*}
one can prove that \eqref{lemma 7 result} holds for $j=1,\ldots,K+1$ as follows.

First prove that \eqref{lemma 7 result} holds for $j=2,\ldots,K+1$. The idea is to construct some $c',d',e',f'\in\mathbb{R}^K$ and apply the induction hypothesis. The construction is
    \begin{eqnarray*}
    c' = (c_2,~c_3,~\ldots,~c_{K+1}),&& d' = (d_2,~d_3,~\ldots,~d_{K+1}),\\
    e' = (e_2,~e_3,~\ldots,~e_{K+1}),&&f' = (f_2,~f_3,~\ldots,~f_{K+1}).
    \end{eqnarray*}
Clearly, $c',d',e',f'$ satisfies $0<c'\leq1$, $d'\geq0$, $e'\geq0$, $0<f'\leq1$ componentwise and
	\begin{eqnarray*}
	\begin{pmatrix}
	\displaystyle \prod_{j=1}^Kc_j' & \displaystyle -\sum_{j=1}^Kd_j'\\
	\displaystyle -\sum_{j=1}^Ke_j' & \displaystyle \prod_{j=1}^Kf_j'
	\end{pmatrix}
	u
    &\!\!\!=& \!\!\!\begin{pmatrix}
	\displaystyle \prod_{j=2}^{K+1}c_j & \displaystyle -\sum_{j=2}^{K+1}d_j\\
	\displaystyle -\sum_{j=2}^{K+1}e_j & \displaystyle \prod_{j=2}^{K+1}f_j
	\end{pmatrix}
	u \geq
	\begin{pmatrix}
	\displaystyle \prod_{j=1}^{K+1}c_j & \displaystyle -\sum_{j=1}^{K+1}d_j\\
	\displaystyle -\sum_{j=1}^{K+1}e_j & \displaystyle \prod_{j=1}^{K+1}f_j
	\end{pmatrix}
	u > 0.
	\end{eqnarray*}
Apply the induction hypothesis to obtain that
    $$\begin{pmatrix}
	c_j' & -d_j'\\
	-e_j' & f_j'
	\end{pmatrix}\cdots
    \begin{pmatrix}
	c_K' & -d_K'\\
	-e_K' & f_K'
	\end{pmatrix} u > 0$$
for $j=1,\ldots,K$, i.e., \eqref{lemma 7 result} holds for $j=2,\ldots,K+1$.

Next prove that \eqref{lemma 7 result} holds for $j=1$. The idea is still to construct some $c',d',e',f'\in\mathbb{R}^K$ and apply the induction hypothesis. The construction is
    \begin{eqnarray*}
    c' = (c_1c_2,~c_3,~\ldots,~c_{K+1}), && d' = (d_1+d_2,~d_3,~\ldots,~d_{K+1}),\\
    e' = (e_1+e_2,~e_3,~\ldots,~e_{K+1}), && f' = (f_1f_2,~f_3,~\ldots,~f_{K+1}).
    \end{eqnarray*}
Clearly, $c',d',e',f'$ satisfies $0<c'\leq1$, $d'\geq0$, $e'\geq0$, $0<f'\leq1$ componentwise and
    \begin{eqnarray*}
	\begin{pmatrix}
	\displaystyle \prod_{j=1}^Kc_j' & \displaystyle -\sum_{j=1}^Kd_j'\\
	\displaystyle -\sum_{j=1}^Ke_j' & \displaystyle \prod_{j=1}^Kf_j'
	\end{pmatrix}
	u
    = \begin{pmatrix}
	\displaystyle \prod_{j=1}^{K+1}c_j & \displaystyle -\sum_{j=1}^{K+1}d_j\\
	\displaystyle -\sum_{j=1}^{K+1}e_j & \displaystyle \prod_{j=1}^{K+1}f_j
	\end{pmatrix}
	u
	>0.
	\end{eqnarray*}
Apply the induction hypothesis to obtain
    \begin{eqnarray*}
    v_2':=\begin{pmatrix}
	c_2' & -d_2'\\
	-e_2' & f_2'
	\end{pmatrix}\cdots
    \begin{pmatrix}
	c_K' & -d_K'\\
	-e_K' & f_K'
	\end{pmatrix} u > 0, \quad
    v_1':=\begin{pmatrix}
	c_1' & -d_1'\\
	-e_1' & f_1'
	\end{pmatrix}\cdots
    \begin{pmatrix}
	c_K' & -d_K'\\
	-e_K' & f_K'
	\end{pmatrix} u > 0.
    \end{eqnarray*}
It follows that
    \begin{eqnarray*}
    \begin{pmatrix}
	c_1 & -d_1\\
	-e_1 & f_1
	\end{pmatrix}\cdots
    \begin{pmatrix}
	c_{K+1} & -d_{K+1}\\
	-e_{K+1} & f_{K+1}
	\end{pmatrix} u
    &=& \begin{pmatrix}
	c_1 & -d_1\\
	-e_1 & f_1
	\end{pmatrix}
    \begin{pmatrix}
	c_2 & -d_2\\
	-e_2 & f_2
	\end{pmatrix}
    \begin{pmatrix}
	c_3 & -d_3\\
	-e_3 & f_3
	\end{pmatrix}
\cdots
    \begin{pmatrix}
	c_{K+1} & -d_{K+1}\\
	-e_{K+1} & f_{K+1}
	\end{pmatrix} u \\
    &=& \begin{pmatrix}
	c_1 & -d_1\\
	-e_1 & f_1
	\end{pmatrix}
    \begin{pmatrix}
	c_2 & -d_2\\
	-e_2 & f_2
	\end{pmatrix} v_2' \\
    &=& \begin{pmatrix}
	c_1c_2+d_1e_2 & -c_1d_2-d_1f_2\\
	-e_1c_2-f_1e_2 & f_1f_2+e_1d_2
	\end{pmatrix} v_2' \\
    &\geq& \begin{pmatrix}
	c_1c_2 & -d_2-d_1\\
	-e_1-e_2 & f_1f_2
	\end{pmatrix} v_2' \\
    &=& \begin{pmatrix}
	c_1' & -d_1'\\
	-e_1' & f_1'
	\end{pmatrix} v_2' ~=~ v_1'>0,
    \end{eqnarray*}
i.e., \eqref{lemma 7 result} holds for $j=1$.

To this end, we have proved that \eqref{lemma 7 result} holds for $j=1,\ldots,K+1$, i.e., Lemma \ref{lemma: matrix multiplication} also holds for $i=K+1$.
	\end{itemize}
According to (i) and (ii), Lemma \ref{lemma: matrix multiplication} holds for $i\geq1$.
	\end{proof}

\noindent{\it Proof of Claim \ref{claim: pre C1}.}
Fix an arbitrary $l\in\hL$, and assume $l_k=k$ for $k=0,\ldots,n_l$ without loss of generality. Fix an arbitrary $t\in\{1,\ldots,n_l\}$, then it suffices to prove that $\underline{A}_s\cdots\underline{A}_{t-1}u_t>0$ for $s=1,\ldots,t$. Denote $r_k:=r_{k,k-1}$ and $S_k:=S_{k,k-1}$ for $k=1,\ldots,t$ for brevity.

Substitute $(i,j)=(k,k-1)$ in \eqref{pre C1} to obtain
	\begin{equation}\label{matrix linear}
	\begin{pmatrix}
	\displaystyle \prod_{s=1}^{k-1}\left( 1-\frac{2r_s\hat{P}_s^+}{\underline{v}_s} \right) &
	\displaystyle -\sum_{s=1}^{k-1}\frac{2r_s\hat{Q}_s^+}{\underline{v}_s} \\
	\displaystyle -\sum_{s=1}^{k-1}\frac{2x_s\hat{P}_s^+}{\underline{v}_s} &
	\displaystyle \prod_{s=1}^{k-1}\left( 1-\frac{2x_s\hat{Q}_s^+}{\underline{v}_s} \right)
	\end{pmatrix}
	\begin{pmatrix}
	r_k \\ x_k
	\end{pmatrix} >0
	\end{equation}
for $k=1,\ldots,t$. Hence,	
	$$\prod_{s=1}^{k-1}\left( 1-\frac{2r_s\hat{P}_s^+}{\underline{v}_s} \right) r_k > \sum_{s=1}^{k-1} \frac{2r_s\hat{Q}_s^+(\overline{q})}{\underline{v}_s} x_k \geq0 $$
for $k=1,\ldots,t$. It follows that $1-2r_k\hat{P}_k^+/\underline{v}_k>0$ for $k=1,\ldots,t-1$. Similarly, $1-2x_k\hat{Q}_k^+/\underline{v}_k>0$ for $k=1,\ldots,t-1$. Then, substitute $k=t$ in \eqref{matrix linear} and apply Lemma \ref{lemma: matrix multiplication} to obtain
	$$\begin{pmatrix}
	\displaystyle 1-\frac{2r_s\hat{P}_s^+}{\underline{v}_s} &
	\displaystyle -\frac{2r_s\hat{Q}_s^+}{\underline{v}_s} \\
	\displaystyle -\frac{2x_s\hat{P}_s^+}{\underline{v}_s} &
	\displaystyle 1-\frac{2x_s\hat{Q}_s^+}{\underline{v}_s}
	\end{pmatrix}\cdots
	\begin{pmatrix}
	\displaystyle 1-\frac{2r_{t-1}\hat{P}_{t-1}^+(\overline{p})}{\underline{v}_{t-1}} &
	\displaystyle -\frac{2r_{t-1}\hat{Q}_{t-1}^+(\overline{q})}{\underline{v}_{t-1}} \\
	\displaystyle -\frac{2x_{t-1}\hat{P}_{t-1}^+(\overline{p})}{\underline{v}_{t-1}} &
	\displaystyle 1-\frac{2x_{t-1}\hat{Q}_{t-1}^+(\overline{p})}{\underline{v}_{t-1}}
	\end{pmatrix}
	\begin{pmatrix}
	r_t \\ x_t
	\end{pmatrix} >0$$
for $s=1,\ldots,t$, i.e., $\underline{A}_s\cdots \underline{A}_{t-1}u_t>0$ for $s=1,\ldots,t$. This completes the proof of Claim \ref{claim: pre C1}. $\hfill\Box$

\end{document}